\DeclareMathSymbol{\sm}{\mathbin}{AMSa}{"39}
\newtheorem{thm}{Theorem}
\newtheorem{lem}{Lemma}
\newtheorem{prop}{Proposition}
\newtheorem{cor}{Corollary}
\theoremstyle{definition}
\newtheorem{defn}{Definition}
\newtheorem{exam}{Example}
\theoremstyle{remark}
\newtheorem{rem}{Remark}
\title{Some Morse-type inequalities for symplectic manifolds}
\author{Thomas Machon}
\address{H.H.~Wills Physics Laboratory, University of Bristol, Tyndall Avenue, Bristol BS8 1TL, UK}
\email{t.machon@bristol.ac.uk}
\begin{document}

\maketitle

\begin{abstract}
Morse-type inequalities are given for the symplectic versions of the Bott-Chern and Aeppli cohomology groups defined by Tseng and Yau.
\end{abstract}
\vspace{2ex}
The symplectic versions of the Bott-Chern and Aeppli cohomology groups, defined by Tseng and Yau~\cite{tseng2012cohomology}, are finite-dimensional cohomology groups given in terms of differential forms on a closed symplectic manifold $(M, \omega)$ of dimension $2n$. They are  denoted $H^k_{d+d^\Lambda}(M)$ and $H^k_{dd^\Lambda}(M)$, $1 \leq k \leq 2n$, and satisfy the following inequalities~\cite{angella2015inequalities} relating their dimensions to the Betti numbers of $M$, $b_k$,
\begin{equation} \label{eq:ineq}
{\rm dim}\, H^k_{d+d^\Lambda}(M) = {\rm dim} \, H^k_{dd^\Lambda} (M) \geq  b_k.
\end{equation}
The purpose of this paper is to prove a set of Morse-type inequalities for these groups.
\begin{thm} \label{thm:1}
Let $(M, \omega)$ be an $2n$-dimensional closed symplectic manifold and $f$ a Morse function with $m_i$ critical points of index $i$, then for $k \leq n$
\begin{align} \label{eq:thm1}
{\rm dim}\, H^k_{d+d^\Lambda}(M)  \leq &\sum_{i=0} m_{k-2i}, \\
{\rm dim}\, H^k_{dd^\Lambda}(M)  \leq &\sum_{i=0} m_{2n-k+2i}.
\end{align}
\end{thm}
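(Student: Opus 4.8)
The plan is to prove both bounds at once by a Witten-type deformation of the two fourth-order self-adjoint elliptic operators $\Delta_{BC}$ and $\Delta_{A}$ of Tseng--Yau whose kernels in degree $k$ compute $H^k_{d+d^\Lambda}(M)$ and $H^k_{dd^\Lambda}(M)$. Fix a Riemannian metric. For a Morse function $g$ and $t>0$, conjugate $d$ and $d^\Lambda$ by $e^{tg}$; since $\Lambda$ is algebraic it commutes with multiplication by $e^{tg}$, so this replaces $(d,d^\Lambda)$ by $(d_{t,g},d^\Lambda_{t,g})$ with $d_{t,g}=d+t\,dg\wedge(\cdot)$ and $d^\Lambda_{t,g}=d^\Lambda+t\,[\Lambda,dg\wedge(\cdot)]$, the perturbation terms being zeroth order and vanishing on $\mathrm{Crit}(g)$, and one checks $d_{t,g}^2=(d^\Lambda_{t,g})^2=d_{t,g}d^\Lambda_{t,g}+d^\Lambda_{t,g}d_{t,g}=0$. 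Thus $(d_{t,g},d^\Lambda_{t,g})$ generates the same symplectic cohomologies, and the deformed Laplacians $\Delta_{BC}^{t,g}$, $\Delta_{A}^{t,g}$ are again non-negative, self-adjoint and elliptic, with $\dim\ker\Delta_{BC}^{t,g}\big|_{\Omega^k}=\dim H^k_{d+d^\Lambda}(M)$ and $\dim\ker\Delta_{A}^{t,g}\big|_{\Omega^k}=\dim H^k_{dd^\Lambda}(M)$ for every $t$.

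Next I would run the standard semiclassical localization as $t\to\infty$: the top-order-in-$t$ part of $\Delta_{BC}^{t,g}$ (and of $\Delta_{A}^{t,g}$) is multiplication by a non-negative function vanishing to second order exactly on $\mathrm{Crit}(g)$, so forms with $\Delta^{t,g}$-eigenvalue bounded uniformly in $t$ concentrate near the critical points, and an IMS/Agmon-type estimate bounds the number of such small eigenvalues in degree $k$ by the sum over critical points of the corresponding local counts, giving
\[
\dim H^k_{d+d^\Lambda}(M)\le\sum_{p\in\mathrm{Crit}(g)}c^{BC}_{\mathrm{ind}_g(p)}(k),\qquad
\dim H^k_{dd^\Lambda}(M)\le\sum_{p\in\mathrm{Crit}(g)}c^{A}_{\mathrm{ind}_g(p)}(k),
\]
where $c^{BC}_\lambda(k)$ and $c^{A}_\lambda(k)$ are the dimensions, in degree $k$, of the local symplectic Bott--Chern and local Aeppli cohomologies at a nondegenerate critical point of index $\lambda$, i.e.\ of the spaces of bounded ground states of the model operator on $\mathbb{R}^{2n}$ with its standard symplectic form.

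The heart of the argument, and what I expect to be the main obstacle, is the computation of these local invariants; the rest is classical. I would first argue that $c^{BC}_\lambda(k)$ depends only on $\lambda,k,n$: the bottom of the spectrum is locally constant in the metric and in the nondegenerate Hessian of $g$, and the relevant space of data with fixed index is connected, so one may take Darboux coordinates, the flat metric, and $g$ a diagonal nondegenerate quadratic of index $\lambda$. For this model the de Rham Witten ground state is $\psi_\lambda=e^{-t\|x\|^2/2}\,dx_{i_1}\wedge\cdots\wedge dx_{i_\lambda}$, the indices running over the negative eigendirections, and a direct Gaussian computation shows $\psi_\lambda$ is annihilated by $d_{t,g}$, by $d^\Lambda_{t,g}$, and by the formal adjoints of both; since $L=\omega\wedge(\cdot)$ commutes with $d_{t,g}$ and satisfies $[d^\Lambda_{t,g},L]=\pm d_{t,g}$, the forms $L^i\psi_\lambda$ are Bott--Chern cocycles, each living in the single degree $\lambda+2i$. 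The remaining and most delicate point is the matching upper bound — that the local Bott--Chern cohomology is at most one-dimensional in each degree and vanishes outside $\lambda\le k\le 2n-\lambda$, so that
\[
c^{BC}_\lambda(k)=\begin{cases}1,& \lambda\le k\le 2n-\lambda \ \text{and}\ k\equiv\lambda\pmod 2,\\ 0,&\text{otherwise;}\end{cases}
\]
I would prove this by pushing the local de Rham computation (one-dimensional in degree $\lambda$, zero otherwise) through the primitive/Lefschetz decomposition together with a Tseng--Yau elliptic complex computing the Bott--Chern cohomology, whose bundles are small enough to control the cohomology directly. One finds $c^{A}_\lambda(k)=c^{BC}_\lambda(k)$ as well, e.g.\ via the Tseng--Yau duality pairing between Bott--Chern and Aeppli classes (which applies locally too) or by a parallel computation.

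It remains to assemble the two inequalities. Fix the Morse function $f$ and apply the Bott--Chern bound with $g=f$: for $k\le n$ the conditions $\lambda\le k$ and $k\equiv\lambda$ already force $\lambda\le k\le n\le 2n-\lambda$, so
\[
\dim H^k_{d+d^\Lambda}(M)\le\sum_{\substack{0\le\lambda\le k\\ \lambda\equiv k\,(2)}}m_\lambda=\sum_{i\ge 0}m_{k-2i},
\]
which is the first inequality. For the second, apply the Aeppli bound with $g=-f$: a critical point of $-f$ has index $\lambda$ precisely when it is a critical point of $f$ of index $2n-\lambda$, so there are $m_{2n-\lambda}$ of them, and again using $c^{A}_\lambda(k)=1$ exactly for $0\le\lambda\le k\ (\le n)$ with $\lambda\equiv k$,
\[
\dim H^k_{dd^\Lambda}(M)\le\sum_{\substack{0\le\lambda\le k\\ \lambda\equiv k\,(2)}}m_{2n-\lambda}=\sum_{i\ge 0}m_{2n-k+2i}.
\]
(Alternatively the second bound follows from the first via the equality $\dim H^k_{d+d^\Lambda}(M)=\dim H^k_{dd^\Lambda}(M)$ and the $-f$ deformation.) This reduces the theorem to the local model computation, which I regard as its essential content.
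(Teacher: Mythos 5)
Your overall strategy --- conjugate the symplectic differentials by $e^{tg}$, localize the small eigenvalues of a fourth-order Laplacian at the critical points, and reduce everything to a local model computation --- is the same as the paper's. But the step you yourself flag as the heart of the argument, the computation of the local counts $c^{BC}_\lambda(k)$, contains a genuine gap, and it is exactly where the paper spends most of its effort. Your reduction to a standard model (``the bottom of the spectrum is locally constant in the metric and in the nondegenerate Hessian, and the space of such data is connected, so take Darboux coordinates, the flat metric and a diagonal quadratic'') is not valid: the dimension of the $L^2$-kernel of a family of elliptic operators is only upper semicontinuous, not locally constant, so connectedness of the parameter space does not let you normalize the Hessian relative to $\omega$. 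The relative position of the negative eigenspace of the Hessian and the symplectic form genuinely matters: the Gaussian ground state $e^{-t\|x\|^2/2}\,dx_{i_1}\wedge\cdots\wedge dx_{i_\lambda}$ is primitive, and hence interacts correctly with the Lefschetz structure, only when the negative directions span a subspace of the right isotropic/coisotropic type. The paper deals with this by imposing a compatibility condition between $f$ and $\omega$ at each critical point (Definition~\ref{def:morse}) and then modifying $\omega$ by a diffeomorphism supported near the critical points (Lemma~\ref{lem:what}) so that every critical point is in the compatible normal form; only then is the model computation carried out.

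Second, even in the normalized model, the matching upper bound --- that the model operator has no $L^2$-kernel beyond the span of the $L^i\psi_\lambda$ --- is deferred in your proposal to ``pushing the local de Rham computation through the Lefschetz decomposition together with a Tseng--Yau elliptic complex.'' This is precisely Proposition~\ref{prop:BIGP} of the paper, whose proof (via Lemmas~\ref{lem:swi}--\ref{lem:bigo}) occupies most of Section~\ref{sec:local} and rests on delicate estimates for the operators $C_f$, $C_f^\dagger$, $M_f$ measuring the failure of the K\"ahler-type identities for the deformed differentials; it is not a formal consequence of the de Rham computation. Note also that the paper does not use the Tseng--Yau Laplacian \eqref{op:bad} directly, because that operator fails to commute with $L$ and $\Lambda$; it introduces the modified operator $\mathcal{D}$ of \eqref{eq:D} precisely so that the harmonic forms admit a Lefschetz decomposition and the whole problem reduces to primitive harmonic forms, where the local analysis is tractable. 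If you work degree by degree with the full Bott--Chern Laplacian as proposed, you must redo the local kernel computation in every degree rather than only for primitive forms. (Your assembly of the two inequalities from the local counts, and the replacement of $f$ by $-f$ for the $dd^\Lambda$ side, do match the paper's endgame.)
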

\begin{rem}
The dimensions of the $d+d^\Lambda$ and $dd^\Lambda$ groups can vary under homotopy of the symplectic form~\cite{tardini2019symplectic}.
\end{rem}
$(M, \omega)$ satisfying the hard Lefschetz condition is equivalent to the inequality \eqref{eq:ineq} being saturated for all $k$~\cite{tseng2012cohomology}, and in this case Theorem~\ref{thm:1} is strictly weaker than the weak Morse inequalities if $n \geq 2$ (and equivalent if $n=1$). If $(M, \omega)$ does not satisfy the hard Lefschetz condition, Theorem~\ref{thm:1} can give constraints on the $m_i$ additional to those obtained from the weak Morse inequalities. In particular, a short calculation gives the following.
\begin{cor} \label{cor:1}
The total number of critical points of a Morse function on a closed symplectic manifold $(M, \omega)$ of dimension $2n$ satisfies the inequality
$$ h^{n-2}+2h^{n-1}+ h^n \leq \sum_{i=0}^{2n} m_i,$$
where $h^k = {\rm dim}\, H^k_{d+d^\Lambda}(M)$.
\end{cor}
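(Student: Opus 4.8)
The plan is to obtain the corollary by simply adding up the estimates of Theorem~\ref{thm:1} at the three degrees $n-2$, $n-1$ and $n$, after using the dimension equality in \eqref{eq:ineq} to turn the $dd^\Lambda$-estimate into one for $h^k$; the only real work is index bookkeeping.

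By \eqref{eq:ineq} we have $\dim H^k_{dd^\Lambda}(M) = h^k$ for all $k$, so the second line of \eqref{eq:thm1} becomes, for $k \le n$,
\begin{equation*}
h^k \;\le\; \sum_{i \ge 0} m_{2n-k+2i},
\end{equation*}
the sum running over indices $\ge 2n-k$ (hence $\ge n$) of the same parity as $k$, while the first line gives, for $k \le n$,
\begin{equation*}
h^k \;\le\; \sum_{i \ge 0} m_{k-2i},
\end{equation*}
the sum running over indices $\le k$ of the same parity as $k$ (with the convention $m_j = 0$ unless $0 \le j \le 2n$).

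Next I would apply the first estimate to $h^{n-2}$, the second estimate to $h^n$, and both estimates to $h^{n-1}$, and add the four resulting inequalities. The left-hand side is then $h^{n-2} + 2h^{n-1} + h^n$. On the right, the $h^{n-2}$-estimate supplies all $m_j$ with $j \equiv n \pmod 2$ and $j \le n-2$; the $h^n$-estimate supplies all $m_j$ with $j \equiv n \pmod 2$ and $j \ge n$; and the two $h^{n-1}$-estimates supply, respectively, all $m_j$ with $j \equiv n-1 \pmod 2$ and $j \le n-1$, and all $m_j$ with $j \equiv n-1 \pmod 2$ and $j \ge n+1$. These four index sets are pairwise disjoint and their union is $\{0, 1, \dots, 2n\}$, so the four right-hand sides add up to exactly $\sum_{i=0}^{2n} m_i$, the total number of critical points; this is the claimed inequality.

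I do not expect a substantial obstacle: the content is entirely in Theorem~\ref{thm:1} and the identity \eqref{eq:ineq}. One just has to check the bookkeeping carefully — that each $m_j$ is picked up exactly once, across both parity classes and for both parities of $n$ — and note the harmless low-dimensional edge cases in which $h^{n-2}$ or $h^{n-1}$ refers to a negative degree and is therefore zero.
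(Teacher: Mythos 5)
Your proposal is correct and is exactly the ``short calculation'' the paper alludes to: apply the $d+d^\Lambda$ bound at $k=n-2$ and $k=n-1$, the $dd^\Lambda$ bound (converted via \eqref{eq:ineq}) at $k=n-1$ and $k=n$, and observe that the four index sets partition $\{0,\dots,2n\}$. The bookkeeping, including the parity check around $j=n$ and the degenerate low-$n$ cases, is right.
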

The quantity $h^{n-2}+2h^{n-1}+ h^n$ in Corollary~\ref{cor:1} can be larger than the sum of the Betti numbers, as the following example shows.
\begin{exam}
Let $M= \Gamma  \backslash G$ be the compact 4-manifold with structure equations
$$ \begin{cases} de_1 = 0 \\
de_2 = 0 \\
de_3 = e_1 \wedge e_2 \\
de_4 = e_1 \wedge e_3
\end{cases}, $$
and pick the symplectic form $\omega = e_1 \wedge e_4 + e_2 \wedge e_3$. Then a short computation shows (see Ref.\cite{tardini2019symplectic}, Example 4.6)
$$h^2 = 4, \quad b_2 = 2.$$
Since $h^0=b_0=1$ and $h^1=b_1$ always,
$$h^0+2h^1+h^2 =1+ \sum_{i=0}^{4} b_i \leq \sum_{i=0}^{4} m_i.$$
This inequality cannot be saturated, since the strong Morse inequalities must hold. Hence in this case we can strengthen the statement to
$$2+ \sum_{i=0}^{4} b_i \leq \sum_{i=0}^{4} m_i.$$
\end{exam}
Another example is given by the symplectic 4-manifold constructed by McMullen and Taubes~\cite{mcmullen19994}. The computations of the $d+d^\Lambda$ cohomology are given in Ref.~\cite{tsai2016cohomology}, Section 4.4.2.
\begin{exam}
Let $\Sigma$ be a closed surface with orientation preserving diffeomorphism $\tau$ and let $\omega_\tau$ be a $\tau$ invariant symplectic form. Then construct the manifolds
$$Y_\tau = \Sigma \times_\tau S^1, \quad X = S^1 \times Y_\tau.$$
Let $\phi$ be a coordinate for the $S^1$ base of $Y_\tau$ and $\theta$ a coordinate for the first $S^1$ factor in $X$. Then $X$ admits the symplectic form
$$ \omega = d\theta \wedge d \phi + \omega_\tau.$$
Consider the induced map $\tau^\ast- {\bm I}$ acting on the first de Rham cohomology $H_d^1(\Sigma)$. Let $q+p$, $q \geq p$ be the dimension of ${\rm ker}(\tau^\ast - {\bm I})$ and $q-p$ the dimension of ${\rm ker}(\tau^\ast - {\bm I}) \cap {\rm im} (\tau^\ast - {\bm I})$. Then (Ref.\cite{tsai2016cohomology}, Proposition 4.14)
$$ b_1 = b_3 = h^1=h^3 = q+p+2,$$
$$ b_2 = 2q+2p+2, \quad h^2 = 3q+p+2.$$
Theorem~\ref{thm:1} then gives the inequality
$$ q-p-1 + \sum_{i=0}^4  b_i   \leq  \sum_{i=0}^4 m_i.$$
\end{exam}

The strong Arnold conjecture (see Ref.\cite{golovko2020variants} for a summary) states that the minimal number of critical points of a Morse function is a lower bound for the number of fixed points of a generic Hamiltonian diffeomorphism. The weak version, proved through Floer theory~\cite{floer1989witten,fukaya1999arnold,liu1998floer}, replaces the sum of Morse critical points with a sum of Betti numbers. In this context it is natural to conjecture that the number of fixed points of a generic Hamiltonian diffeomorphism is bounded from below by
$$ h^{n-2}+2h^{n-1}+h^n.$$
In the other direction one can use knowledge about Morse functions on $M$ to constrain the behaviour of the $d+d^\Lambda$ and $dd^\Lambda$ cohomologies. An example is the following.
\begin{cor}
Let $(M, \omega)$ be a simply-connected symplectic manifold of dimension $\geq 6$. Then for $k \leq n$,
\begin{align*} 
{\rm dim}\, H^k_{d+d^\Lambda}(M)  \leq &\sum_{i=0} b_{k-2i}+ 2 \tau_{k-2i}, \\
{\rm dim}\, H^k_{dd^\Lambda}(M)  \leq &\sum_{i=0} b_{2n-k+2i}+2 \tau_{2n-k+2i},
\end{align*}
where $\tau_i$ is the number of torsion generators in $H_i(M; \mathbb{Z})$.
\end{cor}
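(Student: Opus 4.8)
The symplectic geometry is entirely packaged into Theorem~\ref{thm:1}; granting it, the corollary becomes a purely topological statement, namely that a simply-connected manifold of dimension at least $6$ carries a very economical Morse function, which one then feeds into the theorem. So the plan has two ingredients: an optimal Morse function, and bookkeeping.

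The first ingredient is Smale's theorem on minimal handle decompositions (\emph{On the structure of manifolds}; see also Kosinski, \emph{Differential Manifolds}, Ch.~VIII, or Milnor's \emph{Lectures on the $h$-cobordism theorem}): a closed simply-connected smooth manifold $N$ of dimension $\geq 6$ admits a (self-indexing) Morse function with exactly
$$ m_i \;=\; b_i + \tau_i + \tau_{i-1} $$
critical points of index $i$, where $b_i=\dim H_i(N;\mathbb{Q})$, $\tau_i$ is the minimal number of generators of $\mathrm{Tors}\,H_i(N;\mathbb{Z})$, and $\tau_{-1}=0$. Algebraically this is just the observation that the cellular chain complex of any handle decomposition is chain-homotopy equivalent to the ``minimal'' complex with these ranks; the content is that, when $\dim N\geq 6$ and $\pi_1(N)=0$, the requisite handle slides, handle cancellations and torsion splittings can be realized geometrically --- this is precisely where the dimension hypothesis (the Whitney trick) and simple connectivity are used. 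Applying this to $M$, which has $\dim M=2n\geq 6$ and is simply connected, produces such an $f$.

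Substituting $m_i=b_i+\tau_i+\tau_{i-1}$ into the two inequalities of Theorem~\ref{thm:1} gives, for $k\leq n$,
$$ \dim H^k_{d+d^\Lambda}(M)\;\leq\;\sum_{i\geq 0}\bigl(b_{k-2i}+\tau_{k-2i}+\tau_{k-2i-1}\bigr), $$
and the analogous estimate with $m_{2n-k+2i}$ for $H^k_{dd^\Lambda}(M)$ (alternatively one can quote $\dim H^k_{d+d^\Lambda}=\dim H^k_{dd^\Lambda}$ from \eqref{eq:ineq} and argue only once). It then remains to repackage the torsion terms into the form stated in the corollary. For this I would use Poincaré duality together with the universal coefficient theorem, which give $\mathrm{Tors}\,H_i(M;\mathbb{Z})\cong\mathrm{Tors}\,H_{2n-1-i}(M;\mathbb{Z})$, hence $\tau_i=\tau_{2n-1-i}$, together with $b_i=b_{2n-i}$, $\tau_0=0$, and $\tau_1=0$ by simple connectivity; feeding these into the shifted terms $\tau_{k-2i-1}$ and reorganizing according to parity yields the right-hand sides $\sum_{i\geq 0}(b_{k-2i}+2\tau_{k-2i})$ and $\sum_{i\geq 0}(b_{2n-k+2i}+2\tau_{2n-k+2i})$. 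This is the ``short calculation''; the one thing to watch is which residue class mod $2$ each index lies in and how the involution $i\mapsto 2n-1-i$ acts on those classes.

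I expect the real obstacle to be the first ingredient rather than the bookkeeping: one must quote --- and be comfortable with --- the fact that the algebraically minimal chain complex is realized \emph{simultaneously in every index} by a single Morse function, not merely after stabilization, and this genuinely requires $\dim M\geq 6$ and $\pi_1(M)=0$. Everything else is elementary, and the symplectic form plays no further role beyond Theorem~\ref{thm:1}.
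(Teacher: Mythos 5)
Your overall route --- Smale's theorem on minimal handle decompositions for closed simply-connected manifolds of dimension $\geq 6$, giving a Morse function with $m_j = b_j + \tau_j + \tau_{j-1}$, fed into Theorem~\ref{thm:1} --- is surely the intended one (the paper offers no proof of this corollary, only the remark that it follows from knowledge of Morse functions on $M$). The first ingredient is correctly identified and correctly attributed, and the dimension and simple-connectivity hypotheses are used exactly where you say they are.

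The gap is in the step you yourself flag as ``the one thing to watch'': the parity bookkeeping does not close. Substituting $m_j = b_j+\tau_j+\tau_{j-1}$ into Theorem~\ref{thm:1} gives
$$ \dim H^k_{d+d^\Lambda}(M) \;\leq\; \sum_{i\geq 0}\bigl(b_{k-2i}+\tau_{k-2i}\bigr)\;+\;\sum_{i\geq 0}\tau_{k-2i-1}, $$
so to reach the stated bound $\sum_{i\geq 0}\bigl(b_{k-2i}+2\tau_{k-2i}\bigr)$ you need $\sum_{i\geq 0}\tau_{k-2i-1}\leq\sum_{i\geq 0}\tau_{k-2i}$, i.e.\ a comparison of torsion ranks in the two parity classes below degree $k$. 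The duality you invoke, $\tau_j=\tau_{2n-1-j}$, \emph{reverses} parity: it sends $\tau_{k-2i-1}$ to $\tau_{2n-k+2i}$, which has the same parity as $k$ but sits in degrees $\geq 2n-k\geq n$, not among the terms $\tau_{k-2i}$ with degrees $\leq k\leq n$. Within a fixed parity class duality imposes no relation at all, so ``reorganizing according to parity'' cannot produce the claimed right-hand side. Concretely, for $2n=8$ and $k=3$ the substituted bound is $b_3+b_1+\tau_3+\tau_2$ while the corollary asserts $b_3+b_1+2\tau_3$; these are compatible only if $\tau_2\leq\tau_3$, which is not forced by $\pi_1=0$ and Poincar\'e duality (they give only $\tau_2=\tau_5$ and $\tau_3=\tau_4$). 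So either the corollary should be read with right-hand side $\sum_{i\geq 0}\bigl(b_{k-2i}+\tau_{k-2i}+\tau_{k-2i-1}\bigr)$ --- which is what your argument actually proves --- or an additional input beyond Smale's theorem is required; your proposal as written does not supply one, and the final ``short calculation'' as described would fail.
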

Upper bounds for the $d+d^\Lambda$ and $dd^\Lambda$ groups have been given by Angella and Tardini, who give inequalities bounding certain combinations of dimensions of the $H^k_{d+d^\Lambda}$ and $H^k_{d+d^\Lambda}$ groups in terms of Betti numbers\cite{angella2017quantitative,tardini2017cohomological}. Tseng and Yau also defined related cohomological invariants~\cite{tseng2012cohomology2} (see also Ref.\cite{tsai2016cohomology}) originating from an elliptic complex (also studied by Smith in four dimensions~\cite{smith1976examples}). When the de Rham class of $\omega$ is integral, we have the prequantum line bundle $L$ and associated sphere bundles $E_p = S(L^{\oplus(p+1)})$. In Ref.\cite{tanaka2018odd} the invariants associated to the elliptic complex were identified with the regular de Rham cohomology of the $E_p$, to which one may apply the regular Morse inequalities.

The proof of Theorem~\ref{thm:1} is analytic and similar in nature to Witten's construction for Morse theory~\cite{witten1982supersymmetry}. In Sections~\ref{sec:lef} and~\ref{sec:coh} we review material on the Lefschetz decomposition of differential forms as well as basic properties of the $d+d^\Lambda$ and $dd^\Lambda$ cohomology groups. Section~\ref{sec:hodge} discusses the Hodge theory of these groups, introducing an alternative elliptic operator characterising the harmonic $d+d^\Lambda$ forms. In Section~\ref{sec:witten} we then show that an appropriate Witten-Novikov deformation respects the Lefschetz decomposition of harmonic forms in the Hodge theory of the $d+d^\Lambda$ cohomology. Section~\ref{sec:morse} discusses a compatibility condition between a Morse function and symplectic form, which is a local version of the condition found in the study of Weinstein manifolds~\cite{cieliebak2012stein}. Requiring this condition ensures that the localization phenomenon of harmonic forms associated to the Witten-Novikov deformation interacts well with the Lefschetz decomposition. Sections~\ref{sec:local}--{\ref{sec:proof} are then concerned with the proof of Theorem~\ref{thm:1}, following, {\it mutatis mutandis}, the proof of the Morse inequalities given by Zhang~\cite{zhang2001lectures}.

\section{The Lefschetz decomposition} \label{sec:lef}
Much of the material in Sections~\ref{sec:lef}-\ref{sec:hodge} is a summary of existing results, we refer the reader to Refs.~\cite{yan1996hodge,tseng2012cohomology} for more details. Throughout $(M, \omega)$ will be a closed symplectic manifold of dimension $2n$, and $(\omega, g, J)$ will be a compatible triple of symplectic form, metric, and almost complex structure respectively. The Lefschetz operator $L$ is defined on the space of differential $k$-forms, $\Omega^k(M)$
$$ L : \Omega^k(M) \to \Omega^{k+2}(M), \quad \alpha \mapsto \omega \wedge \alpha,$$
along with its dual operator
$$\Lambda : \Omega^k(M) \to \Omega^{k-2}(M), \quad \alpha \mapsto \iota_\pi \alpha,$$
where $\pi$ is the Poisson structure obtained as the inverse of $\omega$. Concretely, in local Darboux coordinates we define
$$\omega = \sum_{i=1}^n dx_{2i-1} \wedge dx_{2i}, \quad \pi = \sum_{i=1} \partial_{{2i-1}} \wedge \partial_{2i},$$
where $\partial_i$ is the coordinate vector field associated to $x_i$. If we further define the index counting operator $H$ acting on $k$ forms as
$$ H: \alpha \mapsto (n-k) \alpha,$$
then we obtain the $\mathfrak{sl}(2, \mathbb{R})$ representation~\cite{yan1996hodge}
$$[\Lambda, L] = H, \quad [\Lambda, H] = 2 \Lambda, \quad [L, H] = -2L.$$
This yields the Lefschetz decomposition of differential forms on $M$~\cite{weil1958introduction}
$$\Omega^k(M) = \bigoplus_{r={\rm max}(0,k-n)} L^r P\Omega^{k-2r}(M),$$
where $P\Omega^{k-2r}$ is the space of primitive $k$-forms, defined as $k$-forms $\alpha$ satisfying $\Lambda \alpha = 0$, or equivalently $L^{n-k+1} \alpha = 0$. There is a corresponding dual (or coeffective~\cite{bouche1990cohomologie}) decomposition in terms of coeffective (or coprimitive) forms, which are defined as differential $k$-forms satisfying $L \alpha = 0$, or equivalently $\Lambda^{n-k+1} \alpha=0$.

As has been observed~\cite{tseng2012cohomology,bahramgiri,angella2014symplectic,lin2011currents}, primitive and coeffective differential forms are closely related to coisotropic and isotropic subspaces respectively. We will elaborate on this relationship a little, as it motivates the notion of 
compatible Morse function in Section~\ref{sec:morse}. Suppose $W$ is a symplectic space of dimension $2n$ and let $V$ be a $k$-dimensional subspace of $W$ which is isotropic if $k<n$, Lagrangian if $k=n$ and coistropic if $k>n$. Then on can associate to $V$ is an element of the exterior algebra $\alpha_V \in \bigwedge^{2n-k} W$ defined as the contraction of a basis of $V$ with an arbitrary volume element in $\bigwedge^{2n} W$. The following is easily seen.
\begin{prop}
If $V$ is isotropic, $\alpha_V$ is coeffective. If $V$ is coisotropic, $\alpha_V$ is primitive. If $V$ is Lagrangian, $\alpha_V$ is both coisotropic and coeffective.
\end{prop}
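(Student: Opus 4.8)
The plan is to convert the two membership conditions --- $L\alpha_V=\omega\wedge\alpha_V=0$ (coeffective) and $\Lambda\alpha_V=\iota_\pi\alpha_V=0$ (primitive) --- into elementary facts about the subspace $V$, reading those facts off from a Darboux basis adapted to $V$.

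First I would record two features of $\alpha_V=\iota_{v_1\wedge\cdots\wedge v_k}\,\mathrm{vol}$ (for a basis $v_1,\dots,v_k$ of $V$ and a volume form $\mathrm{vol}$) that hold for every $k$-dimensional $V$: up to a nonzero scalar, $\alpha_V$ is the unique generator of the line $\bigwedge^{2n-k}\mathrm{ann}(V)$, where $\mathrm{ann}(V)=\{\beta\in W^\ast:\beta|_V=0\}$; and $\iota_w\alpha_V=0$ exactly when $w\in V$. Both follow from a direct computation of the iterated contraction once $v_1,\dots,v_k$ is extended to a basis of $W$. In particular $\alpha_V$ is well defined up to scale, so the proposition makes sense, and $\beta\wedge\alpha_V=0$ for every $\beta\in\mathrm{ann}(V)$ since $\dim\mathrm{ann}(V)=2n-k$.

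If $V$ is isotropic then $\omega|_V=0$, so $\omega$ lies in the exterior ideal generated by $\mathrm{ann}(V)$; writing $\omega=\sum_a\beta_a\wedge\gamma_a$ with $\beta_a\in\mathrm{ann}(V)$ and using anticommutativity of $1$-forms, $\omega\wedge\alpha_V=-\sum_a\gamma_a\wedge(\beta_a\wedge\alpha_V)=0$, so $\alpha_V$ is coeffective. If $V$ is coisotropic then $V^\omega$ is isotropic, and under the isomorphism $\phi\colon W\to W^\ast$, $v\mapsto\omega(v,\cdot)$, the subspace $V^\omega$ maps onto $\mathrm{ann}(V)$; isotropy of $\mathrm{ann}(V)$ for the induced symplectic form on $W^\ast$ is precisely the statement that the Poisson bivector can be written $\pi=\sum_a v_a\wedge u_a$ with $v_a\in V$ (equivalently $\pi\in V\wedge W$), whence $\iota_\pi\alpha_V=\sum_a\iota_{u_a}\iota_{v_a}\alpha_V=0$ and $\alpha_V$ is primitive. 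The Lagrangian case $k=n$ is at once an instance of both arguments, so there the middle-degree form $\alpha_V$ is both primitive and coeffective.

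The only ingredient that is not bookkeeping is the existence of the adapted Darboux basis --- i.e.\ the classical normal forms for isotropic and coisotropic subspaces of a symplectic vector space, together with the identity $\phi(V^\omega)=\mathrm{ann}(V)$ --- after which all the vanishing statements are immediate, so I do not expect a genuine obstacle.
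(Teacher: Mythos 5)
Your argument is correct. The paper itself offers no proof of this proposition (it is stated as ``easily seen''), so there is nothing to compare line by line; the implicit intended argument is surely the direct computation in an adapted Darboux/Witt basis, which your last paragraph acknowledges. What you actually write is a clean, basis-free packaging of that computation: the two key observations --- that $\alpha_V$ spans $\bigwedge^{2n-k}\mathrm{ann}(V)$, and that a $2$-form (resp.\ $2$-vector) vanishing on a subspace lies in the ideal generated by its annihilator --- reduce both vanishing statements to one line each, and the coisotropic case is correctly derived from the isotropic one by dualizing via $\phi(V^{\omega})=\mathrm{ann}(V)$. Two small points worth noting. First, you tacitly (and correctly) read the paper's ``volume element in $\bigwedge^{2n}W$'' as a volume form, so that $\alpha_V$ lands in $\bigwedge^{2n-k}W^{\ast}$ where $L$ and $\Lambda$ act; the paper's phrasing is loose here. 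Second, the proposition's conclusion in the Lagrangian case should read ``primitive and coeffective'' (``coisotropic'' is a slip in the statement), and that is exactly what your argument delivers.
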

This has the immediate consequence that, for example, the dual current of coisotropic submanifold is primitive (see Refs.\cite{tseng2012cohomology, bahramgiri,angella2014symplectic,lin2011currents}).
\begin{rem}
There are many examples of primitive or coeffective forms that do not correspond to coisotropic or isotropic spaces (or indeed formal linear combinations thereof). For example, on $\mathbb{R}^4$ with the symplectic form $e_1 \wedge e_2 + e_3 \wedge e_4$, the form $e_1 \wedge e_2 - e_3 \wedge e_4$ is primitive, but does not correspond to any Lagrangian submanifold. This observation is the basis for the proof of Lemma~\ref{lem:bigo}.
\end{rem}

\section{The $d+d^\Lambda$ and $dd^\Lambda$ cohomologies} \label{sec:coh}

On a symplectic manifold one may define the symplectic differential~\cite{koszul1985crochet, brylinski1988differential} $d^\Lambda$ on $k$-forms
$$ d^\Lambda : \Omega^k(M) \to \Omega^{k-1}(M), \quad d^\Lambda: \alpha \mapsto [d, \Lambda] \alpha$$
The property $(d^\Lambda)^2=0$ follows from the integrability of the Poisson structure dual to $\omega$. The associated cohomology groups
$$ H^k_{d^\Lambda}(M) = \frac{{\rm ker} d^\Lambda \cap \Omega^k(M)}{d^\Lambda \Omega^{k+1}(M)},$$
are isomorphic to the de Rham groups, $H^k_{d}(M) \cong H^k_{d^\Lambda}(M)$ (see Ref.\cite{brylinski1988differential}). The differentials $d$ and $d^\Lambda$ interact with the ${\mathfrak{sl}}(2, \mathbb{R})$ representation in the following way (Ref.\cite{tseng2012cohomology}, Lemma 2.3).
\begin{lem} \label{lem:rel1}
The differentials $d$ and $d^\Lambda$ satisfy the following commutation relations with respect to the $\mathfrak{sl}(2, \mathbb{R})$ representation $(L, \Lambda, H)$.
\begin{align*}
&[d, L] = 0, & \quad & [d, \Lambda]  = d^\Lambda,  & \quad &  [d,H] = d,\\
&[d^\Lambda, L] = d , & \quad &   [d^\Lambda, \Lambda]  = 0 , & \quad & [d^\Lambda,H] = -d^\Lambda, \\
&[d d^\Lambda, L] = 0 , & \quad &  [d d^\Lambda, \Lambda]  = 0, & \quad &  [d d^\Lambda,H] = 0.
\end{align*}
\end{lem}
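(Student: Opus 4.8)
The plan is to verify each of the nine commutation relations by a direct computation using the definitions of $d$, $d^\Lambda$, $L$, $\Lambda$, and $H$, exploiting the fact that several of them follow from the others by Jacobi-type manipulations inside the operator algebra. First I would record that $[d,L]=0$ is immediate since $d$ is a derivation and $d\omega=0$: for any $\alpha$, $d(\omega\wedge\alpha)=d\omega\wedge\alpha+\omega\wedge d\alpha=\omega\wedge d\alpha$. Likewise $[d,H]=d$ and $[d^\Lambda,H]=-d^\Lambda$ are bookkeeping identities: $H$ acts on $\Omega^k(M)$ by the scalar $(n-k)$, $d$ raises $k$ by one and $d^\Lambda$ lowers it by one, so on $k$-forms $[d,H]\alpha=(n-(k+1))d\alpha-(n-k)d\alpha=-d\alpha$; I would be careful with the sign convention here so that it matches the stated $[d,H]=d$ (equivalently one takes $H$ with the opposite sign, or reads the bracket in the other order — I would fix the convention once and use it consistently). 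The relation $[d,\Lambda]=d^\Lambda$ is simply the definition of $d^\Lambda$, so nothing is to be proved there.

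The substantive identity is $[d^\Lambda,L]=d$. Here I would argue purely formally in the associative algebra generated by $d$, $\Lambda$, $L$ acting on forms, using $[d,L]=0$, the definition $d^\Lambda=[d,\Lambda]=d\Lambda-\Lambda d$, and the $\mathfrak{sl}(2,\mathbb{R})$ relation $[\Lambda,L]=H$ together with the already-established $[d,H]$ (in whichever sign convention is in force). Expanding,
\[
[d^\Lambda, L] = [d\Lambda-\Lambda d,\,L] = d[\Lambda,L] + [d,L]\Lambda - \Lambda[d,L] - [\Lambda,L]d = d[\Lambda,L] - [\Lambda,L]d = [d,[\Lambda,L]] = [d,H],
\]
using $[d,L]=0$ twice; then $[d,H]=d$ finishes it. The companion relation $[d^\Lambda,\Lambda]=0$ follows similarly: $[d^\Lambda,\Lambda]=[[d,\Lambda],\Lambda]=[d,\Lambda^2]$ up to a sign from the graded Jacobi identity, and one checks directly from the local Darboux expression that $[d,\Lambda^2]=0$ (equivalently, $d^\Lambda$ is built from $\Lambda$ and $d$ and one uses $[\Lambda,\Lambda]=0$ together with the Jacobi identity), giving $(d^\Lambda)^2$-type vanishing as well. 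Finally $[d^\Lambda,H]=-d^\Lambda$ is again the degree count, as above.

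The third row is then automatic. Since $dd^\Lambda = -d^\Lambda d$ (because $d^2=0$ and $(d^\Lambda)^2=0$ combine with $\{d,d^\Lambda\}=0$, which itself follows from $d(d^\Lambda)+d^\Lambda d = d[d,\Lambda]+[d,\Lambda]d = d^2\Lambda - \Lambda d^2 = 0$), I would compute $[dd^\Lambda,X]$ for $X\in\{L,\Lambda,H\}$ via the Leibniz rule $[dd^\Lambda,X]=d[d^\Lambda,X]+[d,X]d^\Lambda$. For $X=L$: $d[d^\Lambda,L]+[d,L]d^\Lambda = d\cdot d + 0 = d^2 = 0$. For $X=\Lambda$: $d\cdot 0 + [d,\Lambda]d^\Lambda = d^\Lambda d^\Lambda = 0$. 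For $X=H$: $d(-d^\Lambda) + d\,d^\Lambda = 0$. So all three vanish, as claimed. The only genuine obstacle is fixing the sign/ordering conventions for $H$ and for graded versus ungraded commutators so that every line comes out with the sign printed in the statement; once that is pinned down at the start, the rest is the short formal computation above together with the one local coordinate check that $d^\Lambda$ commutes with $\Lambda$.
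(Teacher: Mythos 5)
The paper offers no proof of this lemma at all --- it is imported verbatim from Tseng and Yau (Lemma 2.3 of \cite{tseng2012cohomology}) --- so there is no in-paper argument to compare against; your direct verification is essentially the standard one. Most of it is correct: $[d,L]=0$ from $d\omega=0$ and the Leibniz rule, $[d,\Lambda]=d^\Lambda$ by definition, the clean derivation $[d^\Lambda,L]=[d,[\Lambda,L]]=[d,H]=d$ using $[d,L]=0$ twice, the anticommutation $dd^\Lambda=-d^\Lambda d$ from $d^2=0$, and the third row via $[dd^\Lambda,X]=d[d^\Lambda,X]+[d,X]d^\Lambda$. One small point: your degree count for $[d,H]$ actually evaluates $Hd-dH$; with the convention $[A,B]=AB-BA$ one gets $[d,H]=dH-Hd=(n-k)d-(n-k-1)d=+d$ with no sign trouble (the confusion is aggravated by the fact that the paper's own displayed relations $[\Lambda,H]=2\Lambda$ and $[L,H]=-2L$ carry the opposite sign to what this convention produces, given $[\Lambda,L]=H$).

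The one genuine error is the treatment of $[d^\Lambda,\Lambda]=0$. The proposed reduction to $[d,\Lambda^2]$ is false: $[[d,\Lambda],\Lambda]=[d,\Lambda]\Lambda-\Lambda[d,\Lambda]$, whereas $[d,\Lambda^2]=[d,\Lambda]\Lambda+\Lambda[d,\Lambda]$ is the \emph{anti}commutator, and since $\Lambda$ is an even operator the (graded) Jacobi identity with $[\Lambda,\Lambda]=0$ returns only the tautology $0=0$. This relation is not a formal consequence of the others; it genuinely uses the structure of $\Lambda$ as contraction with the Poisson bivector. Your fallback --- a direct check in Darboux coordinates, where $d$ and $\Lambda$ are built from constant-coefficient creation/annihilation-type operators and the relevant contractions commute with $\Lambda$ --- does work and should be promoted to the actual argument for that entry; once $[d^\Lambda,\Lambda]=0$ is in hand the rest of your scheme goes through as written.
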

These differentials are used to construct symplectic analogues of the Bott-Chern and Aeppli cohomologies for complex manifolds. The cohomology group $H^k_{d+d^\Lambda}(M)$ is defined as the cohomology of the following short differential complex
\begin{center}
\begin{tikzcd}
\Omega^k(M) \arrow[r,"d d^\Lambda"]  & \Omega^k(M) \arrow[r,"d+d^\Lambda"]  & \Omega^{k+1}(M) \oplus \Omega^{k-1}(M)\end{tikzcd}
\end{center}
so that
$$H^k_{d+d^\Lambda}(M) = \frac{{\rm ker} \, (d+d^\Lambda) \cap \Omega^k(M)}{{\rm im}\, dd^\Lambda \cap \Omega^k(M)}.$$
The $dd^\Lambda$ cohomology is similarly defined using the complex
\begin{center}
\begin{tikzcd}
\Omega^{k-1}(M) \arrow[rd,"d"] & &  \\
& \Omega^k(M) \arrow[r,"dd^\Lambda"]  & \Omega^k(M) \\
\Omega^{k+1}(M) \arrow[ru,"d^\Lambda"]& & \end{tikzcd}
\end{center}
so that
$$ H^k_{dd^\Lambda}(M) = \frac{{\rm ker}\, dd^\Lambda \cap \Omega^k(M)}{({\rm im}\ d+ {\rm im} d^\Lambda) \cap \Omega^k(M)}.$$
Both the $d+d^\Lambda$ and $dd^\Lambda$ groups are finite dimensional, and there are isomorphisms
$$ H^k_{d+d^\Lambda} \cong H^{2n-k}_{d+d^\Lambda} \cong H^k_{dd^\Lambda}.$$

\section{Hodge theory for the $d+d^\Lambda$ cohomology} \label{sec:hodge}

The Riemannian metric $g$ gives the associated Hodge star $\ast$, which allows us to define the inner product on forms
$$ (\alpha, \beta) = \int_M \alpha \wedge \ast \beta,$$
and the norm
$$ \| \alpha \|^2 = \int_M \alpha \wedge\ast \alpha.$$
It follows from the compatibility of the triple $(\omega, g , J)$ (see Ref.\cite{tseng2012cohomology}) that $\Lambda$ is the adjoint of $L$ with respect to the pairing $(\cdot, \cdot )$, so that
$$(L \alpha, \beta) = (\alpha, \Lambda \beta).$$
We may the define the codifferential $d^\ast$ and the symplectic codifferential $d^{\Lambda \ast}$ as the adjoints of $d$ and $d^\Lambda$ respectively,
$$ d^\ast = - \ast d \ast, \quad d^{\Lambda \ast} = [L, d^\ast].$$
We then have the following commutation relations (see Ref.\cite{tseng2012cohomology}, Lemma 2.10).
\begin{lem}
The differential operators $(d^\ast, d^{\Lambda \ast}, d^\ast d^{\Lambda \ast})$ satisfy the following commutation relations with respect to the $\mathfrak{sl}(2, \mathbb{R})$ representation $(L, \Lambda, H)$.
\begin{align*}
&[d^\ast, L] = -d^{\Lambda \ast}, &\quad & [d^\ast, \Lambda]  = 0, & \quad &  [d^\ast,H] = - d^\ast, \\
&[d^{\Lambda \ast}, L] = 0 ,& \quad &  [d^{\Lambda \ast}, \Lambda]  = -d^\ast , &\quad & [d^{\Lambda \ast},H] = d^{\Lambda \ast}, \\
&[d^\ast d^{\Lambda \ast}, L] = 0 , &\quad&  [d^\ast d^{\Lambda \ast}, \Lambda] = 0 , &\quad &  [d^\ast d^{\Lambda \ast},H] = 0.
\end{align*}
\end{lem}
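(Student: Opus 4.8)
The plan is to derive each relation in the lemma as the adjoint, with respect to the inner product $(\cdot,\cdot)$, of the corresponding relation in Lemma~\ref{lem:rel1}. Two facts are needed: on the closed manifold $M$ the operators $d$ and $d^\ast$ are mutually adjoint, and by the compatibility of $(\omega,g,J)$ the operators $L$ and $\Lambda$ are mutually adjoint, while $H$ is self-adjoint since it acts on $\Omega^k(M)$ as multiplication by the real scalar $n-k$. The one point that needs checking is that $d^{\Lambda\ast}$, \emph{defined} in the text as $[L,d^\ast]$, genuinely is the adjoint of $d^\Lambda$: writing $d^\Lambda=[d,\Lambda]=d\Lambda-\Lambda d$ one obtains $(d^\Lambda)^\ast=\Lambda^\ast d^\ast-d^\ast\Lambda^\ast=Ld^\ast-d^\ast L=[L,d^\ast]=d^{\Lambda\ast}$, where the second equality uses $\Lambda^\ast=L$.

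Given this, I would use the elementary identity $[A,B]^\ast=-[A^\ast,B^\ast]$ and run through the relations of Lemma~\ref{lem:rel1} one by one, noting that $L$ and $\Lambda$ exchange roles under adjunction because $L^\ast=\Lambda$. For instance $[d,L]=0$ gives $[d^\ast,\Lambda]=0$; $[d,\Lambda]=d^\Lambda$ gives $-[d^\ast,L]=d^{\Lambda\ast}$, i.e. $[d^\ast,L]=-d^{\Lambda\ast}$; and $[d,H]=d$ gives $-[d^\ast,H]=d^\ast$, i.e. $[d^\ast,H]=-d^\ast$. The second row of the present lemma follows in exactly the same way from $[d^\Lambda,L]=d$, $[d^\Lambda,\Lambda]=0$ and $[d^\Lambda,H]=-d^\Lambda$.

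For the third row I would first observe that $(dd^\Lambda)^\ast=(d^\Lambda)^\ast d^\ast=d^{\Lambda\ast}d^\ast$, and that since $(d+d^\Lambda)^2=0$ together with $d^2=(d^\Lambda)^2=0$ forces $dd^\Lambda=-d^\Lambda d$, taking adjoints gives $d^{\Lambda\ast}d^\ast=-d^\ast d^{\Lambda\ast}$; thus $d^\ast d^{\Lambda\ast}$ differs from $(dd^\Lambda)^\ast$ only by a sign. Hence the three relations $[dd^\Lambda,L]=[dd^\Lambda,\Lambda]=[dd^\Lambda,H]=0$ pass to their adjoints to give $[d^\ast d^{\Lambda\ast},\Lambda]=[d^\ast d^{\Lambda\ast},L]=[d^\ast d^{\Lambda\ast},H]=0$, which is precisely the last row. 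The computation is entirely formal, so I expect no real obstacle; the only things to watch are the sign bookkeeping when adjoining commutators of composite operators and the identification $(d^\Lambda)^\ast=d^{\Lambda\ast}$.
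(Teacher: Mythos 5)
Your proof is correct, and it is essentially the intended argument: the paper states this lemma without proof, citing Lemma 2.10 of Tseng--Yau, and the standard derivation is exactly what you do — take adjoints of the relations in Lemma~\ref{lem:rel1} using $L^\ast=\Lambda$, $H^\ast=H$, $(d^\Lambda)^\ast=[L,d^\ast]=d^{\Lambda\ast}$ and $[A,B]^\ast=-[A^\ast,B^\ast]$. One small caveat in the last row: invoking $(d+d^\Lambda)^2=0$ to deduce $dd^\Lambda=-d^\Lambda d$ is circular, since $(d+d^\Lambda)^2=dd^\Lambda+d^\Lambda d$ is precisely the anticommutator; but the anticommutation follows directly from $d^\Lambda=d\Lambda-\Lambda d$ and $d^2=0$ (namely $dd^\Lambda=-d\Lambda d=-d^\Lambda d$), so your conclusion stands after this trivial repair.
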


The short differential complex for the $d+d^\Lambda$ cohomology suggests the self-adjoint operator
\begin{equation} \label{eq:opbad}
d d^\Lambda d^{\Lambda \ast} d^\ast +\lambda (d^\ast d+ d^{\Lambda \ast} d^\Lambda),
\end{equation}
where $\lambda$ is an arbitrary constant, should act as a Laplacian for the $d+d^\Lambda$ cohomology. Consequently, one defines a differential form $\alpha$ as $d+d^\Lambda$ harmonic if it satisfies
$$ d\alpha = d^\Lambda \alpha = d^{\Lambda \ast} d^\ast \alpha = 0,$$
with $\mathcal{H}^k_{d+d^\Lambda}(M)$ the space of harmonic forms. The operator \eqref{eq:opbad} is not elliptic, so cannot be used as the basis for a Hodge theory of the $d+d^\Lambda$ cohomology. Instead Tseng and Yau~\cite{tseng2012cohomology}  introduce a fourth-order self-adjoint elliptic operator whose kernel corresponds precisely to the harmonic $d+d^\Lambda$ forms,
\begin{equation} \label{op:bad}
d d^\Lambda d^{\Lambda \ast} d^\ast + d^{\Lambda \ast} d^\ast d d^\Lambda + d^\ast d^\Lambda d^{\Lambda \ast} d+ d^{\Lambda \ast} d d^\ast d^\Lambda + \lambda (d^\ast d+ d^{\Lambda \ast} d^\Lambda).\end{equation}
This operator is modelled after the similar operator used in the harmonic theory of the Bott-Chern cohomology of complex manifolds (see Ref.\cite{kodaira201548} Proposition 5 and Ref.\cite{schweitzer2007autour}, Section 2.b). The operator \eqref{eq:opbad} has two inconvenient properties. Firstly, it does not commute with the operators $L$ and $\Lambda$, so does not immediately give the Lefschetz decomposition of harmonic forms. Secondly it depends on an arbitrary constant $\lambda$ of dimension $[{\rm Length}]^{-2}$, so that solving $D_{d+d^\Lambda}=0$ amounts to solving two separate equations (the fourth and quadratic order terms). Instead we introduce the fourth-order operator $\mathcal{D}: \Omega^k(M) \to \Omega^k(M)$ given by
\begin{equation} \label{eq:D} \mathcal{D} = d^\ast d d^\ast d + d^{\Lambda \ast }d^{\Lambda} d^{\Lambda \ast} d^\Lambda + d^{\Lambda \ast} d d^\ast d^\Lambda +  d^{\ast} d^\Lambda d^{\Lambda \ast} d + 2 d d^\Lambda d^{\Lambda \ast} d^\ast.
\end{equation}
\begin{prop} \label{prop:D}
The operator $\mathcal{D}$ has the following properties:
\begin{enumerate}
\item $\mathcal{D}$ is elliptic and self-adjoint,
\item ${\rm ker}\; \mathcal{D} = \mathcal{H}^k_{d+d^\Lambda}(M)$,
\item $[\Lambda, \mathcal{D}] = [L, \mathcal{D}]=0$.
\end{enumerate}
\end{prop}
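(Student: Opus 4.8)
The plan is to derive all three properties from a single structural observation: $\mathcal{D}$ is a sum of ``squares''. Grouping the summands of \eqref{eq:D} and using $(dd^\Lambda)^\ast = d^{\Lambda\ast}d^\ast$, $(d^\ast d^\Lambda)^\ast = d^{\Lambda\ast}d$, and so on, one rewrites
\[
\mathcal{D} = (d^\ast d)^\ast(d^\ast d) + (d^{\Lambda\ast}d^\Lambda)^\ast(d^{\Lambda\ast}d^\Lambda) + (d^\ast d^\Lambda)^\ast(d^\ast d^\Lambda) + (d^{\Lambda\ast}d)^\ast(d^{\Lambda\ast}d) + 2(d^{\Lambda\ast}d^\ast)^\ast(d^{\Lambda\ast}d^\ast).
\]
This is manifestly self-adjoint and non-negative, with
\[
(\mathcal{D}\alpha,\alpha) = \|d^\ast d\alpha\|^2 + \|d^{\Lambda\ast}d^\Lambda\alpha\|^2 + \|d^\ast d^\Lambda\alpha\|^2 + \|d^{\Lambda\ast}d\alpha\|^2 + 2\,\|d^{\Lambda\ast}d^\ast\alpha\|^2 .
\]
Property (2) then follows at once: $\mathcal{D}\alpha = 0$ forces every term on the right to vanish; $d^\ast d\alpha = 0$ gives $\|d\alpha\|^2 = (d^\ast d\alpha,\alpha) = 0$, hence $d\alpha = 0$, and likewise $d^{\Lambda\ast}d^\Lambda\alpha = 0$ gives $d^\Lambda\alpha = 0$; the two mixed conditions $d^\ast d^\Lambda\alpha=0$, $d^{\Lambda\ast}d\alpha=0$ are then automatic, so what remains is precisely $d\alpha = d^\Lambda\alpha = d^{\Lambda\ast}d^\ast\alpha = 0$, i.e.\ $\alpha \in \mathcal{H}^k_{d+d^\Lambda}(M)$.

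For (1) I would compute the order-four principal symbol at $\xi \in T^\ast_x M \setminus 0$. Write $e_\xi$ for exterior multiplication by $\xi$ and $\iota_\xi$ for contraction by $\xi^\sharp$; then $\sigma(d)(\xi) = e_\xi$ and $\sigma(d^\ast)(\xi) = \iota_\xi$ up to a common nonzero scalar. Since $\Lambda$ and $L$ are algebraic of order zero, $\sigma(d^\Lambda)(\xi) = [e_\xi, \Lambda]$ and $\sigma(d^{\Lambda\ast})(\xi) = [L, \iota_\xi]$, which by the standard commutation identities between $L, \Lambda$ and exterior/interior multiplication equal $\iota_\eta$ and $e_\eta$ respectively, where $\eta$ is the covector dual to $\pm J\xi^\sharp$. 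Compatibility of the triple $(\omega, g, J)$ gives $|\eta| = |\xi|$ and $\langle \eta, \xi\rangle = \pm\omega(\xi^\sharp, \xi^\sharp) = 0$, so $\xi$ and $\eta$ are nonzero and orthogonal. As $\mathcal{D}$ is a non-negative operator of order four, $\sigma_4(\mathcal{D})(\xi)$ is a non-negative symmetric endomorphism whose quadratic form equals, up to a positive constant,
\[
|\iota_\xi e_\xi v|^2 + |e_\eta \iota_\eta v|^2 + |\iota_\xi \iota_\eta v|^2 + |e_\eta e_\xi v|^2 + 2\,|e_\eta \iota_\xi v|^2 ,
\]
so it suffices to show this vanishes only for $v = 0$. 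Extending $\{\xi/|\xi|, \eta/|\eta|\}$ to an orthonormal coframe and writing $v = a + \xi\wedge b + \eta\wedge c + \xi\wedge\eta\wedge e$ with $a,b,c,e$ containing neither $\xi$ nor $\eta$, vanishing of the first term forces $a = c = 0$, then vanishing of the second forces $e = 0$, then vanishing of the last forces $b = 0$. Hence $\sigma_4(\mathcal{D})(\xi)$ is invertible and $\mathcal{D}$ is elliptic; together with the displayed formula for $(\mathcal{D}\alpha,\alpha)$ this also confirms self-adjointness.

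For (3) it is enough to prove $[\mathcal{D}, L] = 0$: since $\mathcal{D}$ is self-adjoint and degree-preserving and $\Lambda = L^\ast$, one has $[\mathcal{D}, \Lambda] = -[\mathcal{D}, L]^\ast$. For the term $dd^\Lambda d^{\Lambda\ast}d^\ast = (dd^\Lambda)(dd^\Lambda)^\ast$, the relations $[dd^\Lambda, L] = [dd^\Lambda, \Lambda] = 0$ of Lemma~\ref{lem:rel1} give, after taking adjoints, $[(dd^\Lambda)^\ast, L] = 0$ as well, so this term commutes with $L$ by the Leibniz rule. For each of the remaining four terms I would expand $[\,\cdot\,, L]$ by the Leibniz rule using the $\mathfrak{sl}(2,\mathbb{R})$ relations $[d, L] = 0$, $[d^\Lambda, L] = d$, $[d^\ast, L] = -d^{\Lambda\ast}$, $[d^{\Lambda\ast}, L] = 0$; the eight resulting terms then cancel against one another in four pairs, giving $[\mathcal{D}, L] = 0$ and hence $[\mathcal{D}, \Lambda] = 0$.

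The only step that needs genuine care is the symbol computation in the second paragraph — correctly identifying $\sigma(d^\Lambda)(\xi)$ and $\sigma(d^{\Lambda\ast})(\xi)$ with contraction and exterior multiplication by $\eta$, and then running the short multilinear-algebra argument. This is exactly where the orthogonality of $\xi$ and $\eta$, a consequence of the compatibility of $(\omega, g, J)$, is used, and it is the structural reason that $\mathcal{D}$, unlike the operator~\eqref{op:bad}, is visibly positive and therefore straightforwardly elliptic and $L,\Lambda$-invariant.
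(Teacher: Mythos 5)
Your proposal is correct, and parts of it genuinely diverge from the paper's proof. The kernel computation is the same in both: writing $(\mathcal{D}\alpha,\alpha)$ as the sum $\|d^\ast d\alpha\|^2+\|d^{\Lambda\ast}d^\Lambda\alpha\|^2+\|d^\ast d^\Lambda\alpha\|^2+\|d^{\Lambda\ast}d\alpha\|^2+2\|d^{\Lambda\ast}d^\ast\alpha\|^2$ and extracting $d\alpha=d^\Lambda\alpha=d^{\Lambda\ast}d^\ast\alpha=0$. For ellipticity the paper instead uses the symbol-level identities $d^\ast d+dd^\ast\cong d^\Lambda d^{\Lambda\ast}+d^{\Lambda\ast}d^\Lambda$ and the K\"ahler-type anticommutation relations to collapse the whole operator to $\mathcal{D}\cong 2(dd^\ast+d^\ast d)^2$, whence ellipticity is inherited from the Hodge Laplacian; you work directly with the principal symbol, identify $\sigma(d^\Lambda)(\xi)$ and $\sigma(d^{\Lambda\ast})(\xi)$ with $\iota_\eta$ and $e_\eta$ for $\eta$ dual to $\pm J\xi^\sharp$, and check positive-definiteness by the four-block decomposition $v=a+\xi\wedge b+\eta\wedge c+\xi\wedge\eta\wedge e$. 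Both arguments ultimately rest on the same linearized K\"ahler identities, but yours is more self-contained and makes visible \emph{why} the symbol is invertible (orthogonality of $\xi$ and $\eta$ from compatibility of $(\omega,g,J)$), while the paper's is shorter and reuses Tseng--Yau's machinery. For property (3) your reduction $[\mathcal{D},\Lambda]=-[\mathcal{D},L]^\ast$ is a genuine economy: the paper computes all eight commutators $[L,\cdot]$ and $[\Lambda,\cdot]$ of the four non-trivial terms explicitly, whereas you only need the $[\,\cdot\,,L]$ half; your claim that the eight Leibniz terms cancel in four pairs matches the paper's displayed computation exactly. No gaps.
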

\begin{proof}
$\mathcal{D}$ is evidently self-adjoint. To prove ellipticity we perform a symbol calculation, we use $\cong$ to denote equivalence of symbols. We use the relation (Ref.\cite{tseng2012cohomology} Proposition 3.3) 
\begin{equation} \label{eq:2sym}
d^\ast d + d d^\ast \cong d^\Lambda d^{\Lambda \ast} + d^{\Lambda \ast} d^\Lambda,
\end{equation}
as well as the relations (equalities on K\"{a}hler manifolds, see Ref.\cite{tseng2012cohomology} Theorem 3.5) $d^\ast d^\Lambda \cong - d^\ast d^\Lambda$, $d d^{\Lambda \ast} = - d^{\Lambda \ast} d$. Applying the operators $d d^\ast$ and $d^\Lambda d^{\Lambda \ast}$ to \eqref{eq:2sym} we obtain
\begin{equation} \label{eq:SYMB} d d^\ast d d^\ast \cong d d^\Lambda d^{\Lambda \ast} d^\ast + d^{\Lambda \ast} d d^\ast d^\Lambda, \quad d^\Lambda d^{\Lambda \ast} d^\Lambda d^{\Lambda \ast} \cong  d d^\Lambda d^{\Lambda \ast} d^\ast + d^{\Lambda \ast} d d^\ast d^\Lambda, 
\end{equation}
which gives 
$$ \mathcal{D} \cong d^\ast d d^\ast + dd^\ast d d^\ast + d^{\Lambda \ast }d^{\Lambda} d^{\Lambda \ast} d^\Lambda + d^{\Lambda  }d^{\Lambda \ast} d^{\Lambda } d^{\Lambda  \ast} \cong 2(d d^\ast + d^\ast d)^2, $$
and hence $\mathcal{D}$ is elliptic. To prove that the kernel of $\mathcal{D}$ corresponds to harmonic forms note that
$$ \mathcal{D} \alpha = 0 \; \Rightarrow \| d^\ast d \alpha \|^2 + \| d^{\Lambda \ast} d^\Lambda  \alpha \|^2 + \| d^\ast d^\Lambda \alpha \|^2 + \| d^{\Lambda \ast} d \alpha \|^2 + 2 \| d^{\Lambda \ast} d^\ast  \alpha \|^2 = 0 $$
which implies
$$ d \alpha = d^\Lambda \alpha = d^{\Lambda \ast} d^\ast \alpha = 0,$$
and hence the kernel of $\mathcal{D}$ is precisely the harmonic $d+d^\Lambda$ forms. Finally, to prove the commutation relations one uses the formulae of Lemma~\ref{lem:rel1}. First note that the final term of $\mathcal{D}$ commutes with both $\Lambda$ and $L$. The commutators of the first four terms of $\mathcal{D}$ can be computed explicitly,
\begin{align*}
&[L, d^\ast d d^\ast d] = d^{\Lambda \ast} d d^\ast d + d^\ast d d^{\Lambda \ast} d, & \quad &  [\Lambda, d^\ast d d^\ast d] = - d^\ast d^\Lambda d^\ast d - d^\ast d d^\ast d^\Lambda, & \\
&[L, d^{\Lambda \ast} d^{\Lambda} d^{\Lambda \ast} d^{\Lambda}] = - d^{\Lambda \ast} d d^{\Lambda \ast} d^{\Lambda}- d^{\Lambda \ast} d^{\Lambda} d^{\Lambda \ast} d, & \quad & [\Lambda, d^{\Lambda \ast} d^{\Lambda} d^{\Lambda \ast} d^{\Lambda}] = d^{\ast} d^{\Lambda} d^{\Lambda \ast} d^{\Lambda} + d^{\Lambda \ast} d^{\Lambda} d^{\ast} d^{\Lambda}, &\\
&[L, d^{\Lambda \ast} d d^\ast d^\Lambda ] = d^{\Lambda \ast} d d^{\Lambda \ast}  d^\Lambda - d^{\Lambda \ast} d d^\ast d, & \quad & [\Lambda, d^{\Lambda \ast} d d^\ast d^\Lambda ] =  d^{\ast} d d^\ast d^\Lambda-d^{\Lambda \ast} d^\Lambda d^\ast d^\Lambda, &\\
&[L, d^{\ast} d^\Lambda d^{\Lambda \ast} d] = d^{\Lambda \ast} d^\Lambda d^{\Lambda \ast} d -  d^{\ast} d d^{\Lambda \ast} d, & \quad &  [\Lambda, d^{\ast} d^\Lambda d^{\Lambda \ast} d ] = d^{\ast} d^\Lambda d^ {\ast} d - d^{\ast} d^\Lambda d^{\Lambda \ast} d^\Lambda,&
\end{align*}
%
%
%
the sum of these terms for $[L, \cdot]$ and $[\Lambda, \cdot]$ respectively vanish.
\end{proof}

As a consequence of the relation $[\mathcal{D}, \Lambda]=0$, the harmonic forms admit a Lefschetz decomposition (see Ref.\cite{tseng2012cohomology}, Theorem 3.22)
$$ \mathcal{H}^k_{d+d^\Lambda}(M) = \bigoplus_{r= {\rm max}(k-n,0)} L^r P\mathcal{H}_{d+d^\Lambda}^{k-2r}(M),$$
and we are led to study the primitive harmonic forms $P\mathcal{H}_{d+d^\Lambda}^{k}(M)$. The requirement $d\alpha = d^\Lambda \alpha=0$ is redundant if $\alpha$ is primitive, as in this case $d^\Lambda =-\Lambda d$, hence the primitive harmonic forms are given by solving
$$\Lambda \alpha = d\alpha = d^{\Lambda \ast} d^\ast \alpha = 0,$$
and are characterised by the following operator.
\begin{prop}
Let $\mathcal{D}_P : P\Omega^k(M) \to \Omega^k(M)$ be the differential operator given by
$$ \mathcal{D}_P =  d^\ast d d^\ast d  + d^{\Lambda \ast} d d^\ast d^\Lambda +  d d^\Lambda d^{\Lambda \ast} d^\ast.$$
Then:
\begin{enumerate}
\item[] $ \mathcal{D}_P$ is elliptic and self-adjoint,
\item[] $ {\rm ker} \; \mathcal{D}_P = P\mathcal{H}^k_{d+d^\Lambda}(M)$.
\end{enumerate}
\end{prop}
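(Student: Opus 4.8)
The plan is to rewrite $\mathcal{D}_P$ as a sum of operators of the form $A^\ast A$; this makes self-adjointness and the description of the kernel immediate, and ellipticity then follows from a symbol computation that is a minor variant of the one used for Proposition~\ref{prop:D}. First I would record the identities $(d^\ast d)^\ast = d^\ast d$, $(d^\ast d^\Lambda)^\ast = d^{\Lambda\ast}d$ and $(d^{\Lambda\ast}d^\ast)^\ast = d d^\Lambda$, which give
\[
\mathcal{D}_P = (d^\ast d)^\ast(d^\ast d) + (d^\ast d^\Lambda)^\ast(d^\ast d^\Lambda) + (d^{\Lambda\ast}d^\ast)^\ast(d^{\Lambda\ast}d^\ast).
\]
Thus $\mathcal{D}_P$ is a sum of non-negative self-adjoint operators on $\Omega^k(M)$; in particular it is self-adjoint, and for every $\alpha$,
\[
(\mathcal{D}_P\alpha,\alpha) = \|d^\ast d\alpha\|^2 + \|d^\ast d^\Lambda\alpha\|^2 + \|d^{\Lambda\ast}d^\ast\alpha\|^2,
\]
so $\mathcal{D}_P\alpha = 0$ if and only if $d^\ast d\alpha = d^\ast d^\Lambda\alpha = d^{\Lambda\ast}d^\ast\alpha = 0$.

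To pin down the kernel I would then use that $\alpha$ is primitive. From $d^\ast d\alpha = 0$ one gets $\|d\alpha\|^2 = (d^\ast d\alpha,\alpha) = 0$, so $d\alpha = 0$; since moreover $\Lambda\alpha = 0$, we obtain $d^\Lambda\alpha = [d,\Lambda]\alpha = d\Lambda\alpha - \Lambda d\alpha = 0$ (the redundancy already noted preceding the Proposition), so the condition $d^\ast d^\Lambda\alpha = 0$ is automatic. Hence $\mathcal{D}_P\alpha = 0$ is equivalent to $\Lambda\alpha = d\alpha = d^\Lambda\alpha = d^{\Lambda\ast}d^\ast\alpha = 0$, i.e.\ to $\alpha \in P\mathcal{H}^k_{d+d^\Lambda}(M)$; conversely, if $\alpha$ is primitive and $d+d^\Lambda$-harmonic all three factors vanish, so $\mathcal{D}_P\alpha = 0$. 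Therefore $\ker\mathcal{D}_P = P\mathcal{H}^k_{d+d^\Lambda}(M)$.

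For ellipticity I would repeat the symbol computation in the proof of Proposition~\ref{prop:D}. The first identity of \eqref{eq:SYMB} gives $d^{\Lambda\ast}dd^\ast d^\Lambda + dd^\Lambda d^{\Lambda\ast}d^\ast \cong dd^\ast dd^\ast$, and the cross terms $d^\ast d d d^\ast$ and $dd^\ast d^\ast d$ have vanishing principal symbol since $\sigma(d)^2 = \sigma(d^\ast)^2 = 0$, so
\[
\mathcal{D}_P \cong d^\ast d d^\ast d + dd^\ast dd^\ast \cong (d^\ast d + dd^\ast)^2,
\]
whose principal symbol is $|\xi|^4\,\mathrm{Id}$. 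As $\mathcal{D}_P$ is defined on sections of the subbundle $P\Lambda^k T^\ast M$, this symbol restricts there to $|\xi|^4\,\mathrm{Id}_{P\Lambda^k}$, and composing with the $g$-orthogonal (hence self-adjoint, zeroth-order) projection $\Pi_P \colon \Omega^k(M) \to P\Omega^k(M)$ produces a genuine self-adjoint elliptic operator $\Pi_P\mathcal{D}_P$ on $P\Omega^k(M)$; by the factorization above, $\Pi_P\mathcal{D}_P\alpha = 0$ still forces $d^\ast d\alpha = d^\ast d^\Lambda\alpha = d^{\Lambda\ast}d^\ast\alpha = 0$, so $\ker \Pi_P\mathcal{D}_P = \ker \mathcal{D}_P = P\mathcal{H}^k_{d+d^\Lambda}(M)$.

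The one point that needs genuine care, relative to Proposition~\ref{prop:D}, is that the source of $\mathcal{D}_P$ is the subbundle of primitive forms rather than the full bundle of $k$-forms: one must check that the principal symbol, restricted to $P\Lambda^k T^\ast M$, is still an isomorphism (onto $P\Lambda^k T^\ast M$ after composing with $\Pi_P$), not merely injective into $\Lambda^k T^\ast M$. Everything else is a rearrangement of identities and symbol relations already present in the text, so I do not anticipate a serious obstacle.
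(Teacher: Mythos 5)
Your proposal is correct and follows essentially the same route as the paper: the kernel is identified by writing $\mathcal{D}_P$ as a sum of terms of the form $A^\ast A$ (so $\mathcal{D}_P\alpha=0$ forces $d^\ast d\alpha=d^\ast d^\Lambda\alpha=d^{\Lambda\ast}d^\ast\alpha=0$, hence $d\alpha=0$, with $d^\Lambda\alpha=0$ then automatic for primitive $\alpha$), and ellipticity comes from reusing the first identity of \eqref{eq:SYMB} to get $\mathcal{D}_P\cong(d^\ast d+dd^\ast)^2$. The only addition is your explicit handling of the fact that the source is the primitive subbundle (composing with the projection $\Pi_P$, or equivalently noting the symbol is injective so the operator is overdetermined elliptic), a point the paper leaves implicit; your check that $\ker\Pi_P\mathcal{D}_P=\ker\mathcal{D}_P$ is valid.
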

\begin{proof}
The first assertion follows immediately from half the symbol calculation in Proposition~\ref{prop:D}. The second follows from the requirement that
$$ \mathcal{D}_P \alpha = 0 \; \Rightarrow \| d^\ast d \alpha \|^2 + \| d^\ast d^\Lambda \alpha \|^2  +2 \| d^{\Lambda \ast} d^\ast \alpha \|^2 = 0 ,$$
which requires $d \alpha = 0$ and $d^{\Lambda \ast} d^\ast \alpha = 0$.
\end{proof}

\section{Witten-Novikov deformation of $d$ and $d^\Lambda$.} \label{sec:witten}

Given a closed 1-form $\alpha$, the Witten Novikov deformation~\cite{witten1982supersymmetry, novikov1982hamiltonian} of the exterior derivative is
$$ d \mapsto d+ \alpha \wedge .$$
In the case $\alpha= df$ is exact this can also be written as
\begin{equation} \label{eq:w1}
d \mapsto d_f = e^{-f} d e^{f} = d + df \wedge.
\end{equation}
There is a corresponding deformation of the symplectic differential $d^\Lambda = [d, \Lambda]$ given by
\begin{equation} \label{eq:w2}
d^\Lambda \mapsto d^\Lambda_f = e^{-f} d^\Lambda e^{f} = d^\Lambda - \iota_{V_f} = [d_F, \Lambda] ,
\end{equation}
where $V_f = \pi^\sharp(df)$ is the Hamiltonian vector field associated to $f$. 
\begin{rem}
The analogous operation of replacing $df$ with a closed 1-form $\alpha$ in \eqref{eq:w2} is to replace $V_f$ with a symplectic vector field. Note also that the differential $d^\Lambda$ may be defined for a general Poisson manifold (the Koszul-Brylinski differential~\cite{brylinski1988differential,koszul1985crochet}), in which case one may replace $V_f$ with an arbitrary Poisson vector field. Such a differential arises naturally~\cite{evens1999poisson,machon2020poisson} where $V_f$ is taken to be ($\pm$) the modular vector field of the Poisson structure~\cite{weinstein1997modular}.
\end{rem}

\begin{lem}
The groups $H^k_{d_f+d^\Lambda_f}$ and $H^k_{d_f d^\Lambda_f}$ do not depend on $f$.
\end{lem}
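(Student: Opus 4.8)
The plan is to realise an explicit isomorphism of complexes by multiplication by $e^{f}$. First I would record that $e^{f}\colon \Omega^\bullet(M)\to\Omega^\bullet(M)$ is an invertible, degree-preserving operator with inverse $e^{-f}$. Rewriting \eqref{eq:w1} and \eqref{eq:w2} as intertwining identities gives $e^{f}\circ d_f = d\circ e^{f}$ and $e^{f}\circ d^\Lambda_f = d^\Lambda\circ e^{f}$, and hence
$$ e^{f}\circ(d_f+d^\Lambda_f) = (d+d^\Lambda)\circ e^{f}, \qquad e^{f}\circ(d_f d^\Lambda_f) = (dd^\Lambda)\circ e^{f}. $$
In particular, conjugating the relations $d^2=(d^\Lambda)^2=0$ and $dd^\Lambda+d^\Lambda d=0$ shows $d_f^2=(d^\Lambda_f)^2=d_f d^\Lambda_f+d^\Lambda_f d_f=0$, so the two deformed complexes really are the short complexes of Section~\ref{sec:coh} with $d,d^\Lambda$ replaced by $d_f,d^\Lambda_f$.

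Next I would use these identities to check that $e^{f}$ carries ${\rm ker}\,(d_f+d^\Lambda_f)\cap\Omega^k(M)$ bijectively onto ${\rm ker}\,(d+d^\Lambda)\cap\Omega^k(M)$ and ${\rm im}\,(d_f d^\Lambda_f)\cap\Omega^k(M)$ bijectively onto ${\rm im}\,(dd^\Lambda)\cap\Omega^k(M)$; since $e^{f}$ is a linear isomorphism of $\Omega^k(M)$ it therefore descends to an isomorphism $H^k_{d_f+d^\Lambda_f}(M)\xrightarrow{\sim}H^k_{d+d^\Lambda}(M)$, whose target is manifestly independent of $f$. The argument for the Aeppli-type group is identical: $e^{f}$ maps ${\rm ker}\,d_f d^\Lambda_f\cap\Omega^k(M)$ onto ${\rm ker}\,dd^\Lambda\cap\Omega^k(M)$ and $({\rm im}\,d_f+{\rm im}\,d^\Lambda_f)\cap\Omega^k(M)$ onto $({\rm im}\,d+{\rm im}\,d^\Lambda)\cap\Omega^k(M)$, giving $H^k_{d_f d^\Lambda_f}(M)\cong H^k_{dd^\Lambda}(M)$.

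There is no serious obstacle here; the only point deserving a moment's care is the verification that the deformed operators still assemble into the two defining complexes, which is why I would make the conjugation of $d^2=(d^\Lambda)^2=0$ and $dd^\Lambda+d^\Lambda d=0$ explicit before passing to quotients. Equivalently, one could phrase the whole argument structurally: conjugation by $e^{f}$ is an automorphism of the triple $(\Omega^\bullet(M),d,d^\Lambda)$ as a differential module, so it induces isomorphisms on every cohomology functorially built from $d$ and $d^\Lambda$, in particular on the symplectic Bott--Chern and Aeppli groups. (Note that this uses exactness of $df$ in an essential way; for a general closed $1$-form the deformation is not a conjugation and the dimensions genuinely can change, consistent with the remark following Theorem~\ref{thm:1}.)
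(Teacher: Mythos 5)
Your proposal is correct and takes essentially the same approach as the paper: the paper's proof is a one-line observation that the map $\Omega^\bullet(M)\to e^{-f}\Omega^\bullet(M)$ induces the isomorphism, which is precisely the conjugation-by-$e^{f}$ intertwining argument you spell out. The extra verifications you include (that the deformed operators still form the two defining complexes, and the remark on why exactness of $df$ is essential) are sound and consistent with the paper's remarks.
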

\begin{proof}
The map $\Omega^\bullet(M) \to e^{-f} \Omega^\bullet(M)$ induces an isomorphism between $H^k_{d_f+d^\Lambda_f}$ and $H^k_{d+d^\Lambda}$. Similarly for the $dd^\Lambda$ groups. 
\end{proof}
Because they arise through conjugation by $e^f$, the commutation relations of the deformed differentials with the operator $L$, $\Lambda$ and $H$ are preserved under the deformation by $f$.
\begin{lem}
The Witten-deformed differential operators $(d_f, d^\Lambda_f, d_f d^\Lambda_f)$ satisfy the following commutation relations with respect to the $\mathfrak{sl}(2, \mathbb{R})$ representation $(L, \Lambda, H)$.
\begin{align*}
&[d_f, L] = 0, & \quad & [d_f, \Lambda]  = d^\Lambda_f,  & \quad &  [d_f,H] = d_f,\\
&[d^\Lambda_f, L] = d_f , & \quad &   [d^\Lambda_f, \Lambda]  = 0 , & \quad & [d^\Lambda_f,H] = -d^\Lambda_f, \\
&[d_f d_f^\Lambda, L] = 0 , & \quad &  [d_f d_f^\Lambda, \Lambda]  = 0, & \quad &  [d_f d_f^\Lambda,H] = 0.
\end{align*}
\end{lem}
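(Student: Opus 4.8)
The plan is to deduce the entire table from a single observation: each of the operators $L$, $\Lambda$ and $H$ commutes with multiplication by $e^{\pm f}$, while $d_f$, $d^\Lambda_f$ and $d_f d^\Lambda_f$ are precisely the conjugates by $e^f$ of $d$, $d^\Lambda$ and $d d^\Lambda$. For the last operator note that
$$ d_f d^\Lambda_f = (e^{-f} d\, e^{f})(e^{-f} d^\Lambda e^{f}) = e^{-f}(d d^\Lambda) e^{f}, $$
so all three deformed operators are of the form $A_f = e^{-f} A\, e^{f}$ with $A \in \{d,\, d^\Lambda,\, d d^\Lambda\}$.

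First I would record the elementary fact that multiplication by a function commutes with $L$, $\Lambda$ and $H$: for $L = \omega \wedge \cdot$ because wedging is $C^\infty(M)$-bilinear; for $H$ because $e^{\pm f}$ is a $0$-form and hence preserves the grading of $\Omega^\bullet(M)$, on which $H$ acts by a scalar depending only on the degree; and for $\Lambda = \iota_\pi$ because interior multiplication by a bivector field is purely algebraic and $C^\infty(M)$-linear in its form argument. The one point that might merit a remark is the case of $\Lambda$, and even that is immediate, so I do not expect any genuine obstacle here.

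Given this, for every $B \in \{L, \Lambda, H\}$ both $e^{f}$ and $e^{-f}$ commute with $B$, and therefore, for each $A \in \{d,\, d^\Lambda,\, d d^\Lambda\}$,
$$ [A_f,\, B] = e^{-f} A\, e^{f} B - B\, e^{-f} A\, e^{f} = e^{-f} A B\, e^{f} - e^{-f} B A\, e^{f} = e^{-f}[A, B]\, e^{f}. $$
It then remains only to insert the nine commutators supplied by Lemma~\ref{lem:rel1} and conjugate. Thus $[d, L] = 0$ gives $[d_f, L] = 0$; $[d, \Lambda] = d^\Lambda$ gives $[d_f, \Lambda] = e^{-f} d^\Lambda e^{f} = d^\Lambda_f$; $[d, H] = d$ gives $[d_f, H] = d_f$; $[d^\Lambda, L] = d$ gives $[d^\Lambda_f, L] = d_f$; $[d^\Lambda, \Lambda] = 0$ and $[d^\Lambda, H] = -d^\Lambda$ give $[d^\Lambda_f, \Lambda] = 0$ and $[d^\Lambda_f, H] = -d^\Lambda_f$; and the relations $[d d^\Lambda, L] = [d d^\Lambda, \Lambda] = [d d^\Lambda, H] = 0$ conjugate to the corresponding vanishing statements for $d_f d^\Lambda_f$. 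This exhausts the table.
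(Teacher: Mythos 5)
Your proposal is correct and is exactly the argument the paper intends: the paper gives no separate proof, only the one-sentence remark preceding the lemma that the relations are preserved because the deformed differentials arise by conjugation with $e^{f}$, which commutes with $L$, $\Lambda$ and $H$. You have simply filled in the (easy) details of that same conjugation argument, including the correct observation that $d_f d^\Lambda_f = e^{-f}(dd^\Lambda)e^{f}$, so nothing further is needed.
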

We also obtain the Witten deformation of the codifferential
$$ d_f^\ast = e^{f} d^\ast e^{-f} = d^\ast + \iota_{\nabla f},$$
as well as the deformation of the symplectic codifferential ${d^\Lambda}^\ast = - \ast d^\Lambda \ast =   [L, d^\ast]$
$$ {d^\Lambda_f}^\ast = e^{f} {d^\Lambda}^\ast e^{-f} =  {d^\Lambda}^\ast  + (\iota_{\nabla f} \omega) \wedge = [L, d_f^\ast].$$
We then, once again, immediately obtain the same deformed commutation relations as for the $d^\ast$ and $d^{\Lambda \ast}$ differentials.
\begin{lem}
The Witten-deformed differential operators $(d^\ast_f, {d^\Lambda_f}^\ast, {d_f d^\Lambda_f}^\ast)$ satisfy the following commutation relations with respect to the $\mathfrak{sl}(2, \mathbb{R})$ representation $(L, \Lambda, H)$.
\begin{align*}
&[d_f^\ast, L] = -d_f^{\Lambda \ast}, &\quad & [d_f^\ast, \Lambda]  = 0, & \quad &  [d_f^\ast,H] = - d_f^\ast, \\
&[d_f^{\Lambda \ast}, L] = 0 ,& \quad &  [d_f^{\Lambda \ast}, \Lambda]  = -d_f^\ast , &\quad & [d_f^{\Lambda \ast},H] = d_f^{\Lambda \ast}, \\
&[d_f^\ast d_f^{\Lambda \ast}, L] = 0 , &\quad&  [d_f^\ast d_f^{\Lambda \ast}, \Lambda] = 0 , &\quad &  [d_f^\ast d_f^{\Lambda \ast},H] = 0.
\end{align*}
\end{lem}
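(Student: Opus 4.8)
The plan is to reduce everything to the corresponding relations for the undeformed operators by conjugation. The essential point is that $L$, $\Lambda$ and $H$ are \emph{algebraic} operators: $L=\omega\wedge(\cdot)$ and $\Lambda=\iota_\pi$ act fibrewise on $\Lambda^\bullet T^\ast M$, and $H$ multiplies $\Omega^k(M)$ by the scalar $n-k$. None of them differentiates, so each commutes with multiplication by an arbitrary smooth function, in particular with multiplication by $e^{\pm f}$.

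First I would record that, with $d_f^\ast=e^{f}d^\ast e^{-f}$ and $d_f^{\Lambda\ast}=e^{f}d^{\Lambda\ast}e^{-f}$ as above, one also has $d_f^\ast d_f^{\Lambda\ast}=e^{f}(d^\ast d^{\Lambda\ast})e^{-f}$, the central $e^{-f}e^{f}$ cancelling; thus each of the three deformed operators in the statement is the $e^{f}$-conjugate of its undeformed counterpart. Next, for $A\in\{L,\Lambda,H\}$ and $B\in\{d^\ast,\,d^{\Lambda\ast},\,d^\ast d^{\Lambda\ast}\}$, I would use that $A$ commutes with $e^{\pm f}$ to get $[\,e^{f}Be^{-f},\,A\,]=e^{f}(BA-AB)e^{-f}=e^{f}[B,A]e^{-f}$. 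Finally I would substitute the undeformed relations for $(d^\ast,d^{\Lambda\ast},d^\ast d^{\Lambda\ast})$ established in Section~\ref{sec:hodge}, in each of which $[B,A]$ equals $\pm$ one of $d^\ast,d^{\Lambda\ast},d^\ast d^{\Lambda\ast}$ or is $0$, and conjugate back: for example $[d^\ast,L]=-d^{\Lambda\ast}$ yields $[d_f^\ast,L]=e^{f}(-d^{\Lambda\ast})e^{-f}=-d_f^{\Lambda\ast}$, while a vanishing entry remains $0$ since $e^{f}0e^{-f}=0$. All nine entries are obtained in exactly this way.

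The main — and essentially the only — obstacle is verifying the algebraic (fibrewise) nature of $L$, $\Lambda$, $H$, which is immediate from their definitions; after that the argument is the same conjugation mechanism already invoked for the analogous statement about $(d_f,d_f^\Lambda,d_f d_f^\Lambda)$, so no further work is required.
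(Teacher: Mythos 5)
Your proposal is correct and is exactly the paper's argument: the paper asserts these relations "immediately" on the grounds that the deformed operators arise by conjugation with $e^{f}$, and your write-up simply makes explicit the two facts underlying that assertion, namely that $L$, $\Lambda$, $H$ are algebraic (degree-preserving, zeroth-order) and hence commute with $e^{\pm f}$, and that conjugation therefore transports each undeformed commutation relation to its deformed counterpart. No discrepancy to report.
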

We then introduce the deformed operator $\mathcal{D}_f$ which is elliptic and self-adjoint.
$$ \mathcal{D}_f = d_f^\ast d_f d_f^\ast d_f + d_f^{\Lambda \ast }d_f^{\Lambda} d_f^{\Lambda \ast} d_f^\Lambda + d_f^{\Lambda \ast} d_f d_f^\ast d_f^\Lambda +  d_f^{\ast} d_f^\Lambda d_f^{\Lambda \ast} d_f + 2 d_f d_f^\Lambda d_f^{\Lambda \ast} d_f^\ast.$$
We can then introduce the Witten-deformed harmonic $d+d^\Lambda$ forms as the kernel of $\mathcal{D}_f$, which are solutions of the equations
$$ d_f \alpha = d^\Lambda_f \alpha = d_f^{\Lambda \ast} d_f^\ast \alpha = 0.$$
The properties of of these harmonic forms are summarised below.
\begin{prop} \label{prop:hd}
Let $\mathcal{H}^k_{d_f+d^\Lambda_f}(M)$ denote the space of $d_f+ d^\Lambda_f$ Harmonic $k$-forms. Then,
$${\rm dim}\,  \mathcal{H}^k_{d_f+d^\Lambda_f}(M) = {\rm dim}\,  \mathcal{H}^k_{d+d^\Lambda}(M)= {\rm dim}\,  H^k_{d+d^\Lambda}(M) < \infty, $$
and there is a Lefschetz decomposition
$$ \mathcal{H}^k_{d_f+d^\Lambda_f}(M)  = \bigoplus_{r} L^r P\mathcal{H}^{k-2r}_{d_f+d_f^\Lambda}(M).$$
\end{prop}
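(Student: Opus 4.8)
The plan is to get both statements from ingredients already assembled in the paper: the deformed operators $(d_f,d_f^\Lambda,d_f^\ast,d_f^{\Lambda\ast})$ satisfy the same $\mathfrak{sl}(2,\mathbb{R})$-commutation relations as $(d,d^\Lambda,d^\ast,d^{\Lambda\ast})$ (the Lemmas of Section~\ref{sec:witten}), together with the nilpotency identities $d_f^2=(d_f^\Lambda)^2=d_fd_f^\Lambda+d_f^\Lambda d_f=0$, which are immediate from $d_f=e^{-f}de^f$, $d_f^\Lambda=e^{-f}d^\Lambda e^f$; moreover $d_f^\ast$ and $d_f^{\Lambda\ast}$ are the \emph{genuine} formal adjoints of $d_f$ and $d_f^\Lambda$ with respect to the undeformed inner product $(\cdot,\cdot)$, since $(e^{-f}de^f\alpha,\beta)=(e^f\alpha,d^\ast(e^{-f}\beta))=(\alpha,d_f^\ast\beta)$ and similarly for $d_f^\Lambda$; and multiplication by $e^{-f}$ is an isomorphism of complexes $(\Omega^\bullet,d,d^\Lambda)\to(\Omega^\bullet,d_f,d_f^\Lambda)$ commuting with $L$ and $\Lambda$ (both of which commute with multiplication by a function), so that $H^k_{d_f+d_f^\Lambda}\cong H^k_{d+d^\Lambda}$, which is the Lemma of Section~\ref{sec:witten}. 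Since $\mathcal{D}_f$ is elliptic and self-adjoint with $\ker\mathcal{D}_f$ equal to the deformed harmonic forms (ellipticity because $d_f-d$, $d_f^\ast-d^\ast$, $d_f^\Lambda-d^\Lambda$, $d_f^{\Lambda\ast}-d^{\Lambda\ast}$ are all order zero, so $\mathcal{D}_f$ has the same principal symbol as $\mathcal{D}$, computed in Proposition~\ref{prop:D}; and $\ker\mathcal{D}_f=\{d_f\alpha=d_f^\Lambda\alpha=d_f^{\Lambda\ast}d_f^\ast\alpha=0\}$ by the same Bochner pairing of $\mathcal{D}_f\alpha$ with $\alpha$), elliptic theory gives at once that $\mathcal{H}^k_{d_f+d_f^\Lambda}(M)$ is finite-dimensional and $\Omega^k(M)=\mathcal{H}^k_{d_f+d_f^\Lambda}(M)\oplus\operatorname{im}\mathcal{D}_f$ orthogonally.

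The substance is the dimension count. I would establish $\dim\mathcal{H}^k_{d_f+d_f^\Lambda}(M)=\dim H^k_{d_f+d_f^\Lambda}(M)$ by running the Hodge-theoretic argument of Tseng--Yau (\cite{tseng2012cohomology}, the proof that $\mathcal{H}^k_{d+d^\Lambda}\cong H^k_{d+d^\Lambda}$) verbatim with $(d,d^\Lambda,d^\ast,d^{\Lambda\ast})$ replaced throughout by $(d_f,d_f^\Lambda,d_f^\ast,d_f^{\Lambda\ast})$. The easy inclusion, that a $d_fd_f^\Lambda$-exact harmonic form vanishes, follows from $(d_fd_f^\Lambda\beta,\alpha)=(\beta,d_f^{\Lambda\ast}d_f^\ast\alpha)=0$; the other inclusion, that every $(d_f+d_f^\Lambda)$-closed form is the sum of a harmonic form and a $d_fd_f^\Lambda$-exact form, comes from expanding $\operatorname{im}\mathcal{D}_f$ (or the image of the deformed version of the fourth-order operator in \eqref{op:bad}) using the commutation relations. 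Combining this with $H^k_{d_f+d_f^\Lambda}\cong H^k_{d+d^\Lambda}$ and the already-known $\dim\mathcal{H}^k_{d+d^\Lambda}=\dim H^k_{d+d^\Lambda}$ yields the chain of equalities and finiteness asserted in the Proposition.

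For the Lefschetz decomposition, the computation of Proposition~\ref{prop:D}(3) establishing $[L,\mathcal{D}]=[\Lambda,\mathcal{D}]=0$ used only the relations of Lemma~\ref{lem:rel1} and their adjoint counterparts, all of which hold for the deformed operators; hence $[L,\mathcal{D}_f]=[\Lambda,\mathcal{D}_f]=0$ as well. Therefore $\bigoplus_k\mathcal{H}^k_{d_f+d_f^\Lambda}(M)=\bigoplus_k\ker\mathcal{D}_f$ is a finite-dimensional $\mathfrak{sl}(2,\mathbb{R})$-submodule of $\Omega^\bullet(M)$, with $H$ acting as $n-k$ in degree $k$; decomposing it into irreducibles and identifying $P\mathcal{H}^{j}_{d_f+d_f^\Lambda}(M)=\mathcal{H}^{j}_{d_f+d_f^\Lambda}(M)\cap\ker\Lambda$ with the lowest-weight vectors gives $\mathcal{H}^k_{d_f+d_f^\Lambda}(M)=\bigoplus_r L^rP\mathcal{H}^{k-2r}_{d_f+d_f^\Lambda}(M)$, exactly as in the undeformed case (\cite{tseng2012cohomology}, Theorem 3.22).

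I expect the main obstacle to be the transfer step in the second paragraph. It is not computationally hard, but it is the one place where one cannot simply invoke the isomorphism $e^{-f}$ of complexes: that map does \emph{not} intertwine $\mathcal{D}$ with $\mathcal{D}_f$, because it fails to intertwine $d^\ast$ with $d_f^\ast$ (this is precisely what makes the Witten deformation nontrivial and what will later force the localization of harmonic forms near critical points). So one must instead check that every step in Tseng--Yau's derivation of the Hodge decomposition uses only the structure preserved under conjugation by $e^f$ --- the $\mathfrak{sl}(2,\mathbb{R})$-relations, the nilpotency identities, and the fact that $d_f^\ast$, $d_f^{\Lambda\ast}$ are honest $(\cdot,\cdot)$-adjoints of $d_f$, $d_f^\Lambda$ --- and no property of $d$, $d^\Lambda$ that is destroyed by the deformation. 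Ingredient (ii) above is exactly what guarantees this.
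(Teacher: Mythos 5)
Your proposal is correct and follows essentially the same route the paper takes (the paper states Proposition~\ref{prop:hd} without a separate proof, relying on exactly the ingredients you list: the preserved $\mathfrak{sl}(2,\mathbb{R})$ commutation relations, the cohomology isomorphism induced by $e^{-f}$, and the ellipticity and kernel identification for $\mathcal{D}_f$, so that the Hodge theory of Section~\ref{sec:hodge} carries over verbatim). Your explicit flagging of the fact that $e^{-f}$ does not intertwine $\mathcal{D}$ with $\mathcal{D}_f$, so the Hodge-theoretic dimension count must be re-run rather than transported, is the right caveat and is consistent with the paper's logic.
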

As in Section~\ref{sec:hodge}, we can introduce the differential operator
$$\mathcal{D}_{Pf} = \mathcal{D}_P =  d_f^\ast d_f d_f^\ast d_f  + d_f^{\Lambda \ast} d_f d_f^\ast d_f^\Lambda +  d_f d_f^\Lambda d_f^{\Lambda \ast} d_f^\ast,$$
whose kernel corresponds to the primitive $d_f+d^\Lambda_f$ harmonic forms,
$${\rm ker}\, \mathcal{D}_{Pf} \cap P\Omega^k(M) = P\mathcal{H}^{k-2r}_{d_f+d_f^\Lambda}(M).$$
There is an analogous theory for the $dd^\Lambda$ cohomology, but it is not required for our purposes.

\section{Compatible Morse functions} \label{sec:morse}
Our goal is to study the primitive $d_f+d_f^\Lambda$ harmonic forms, with the idea to localize them near critical points of $f$. In order for this to be tractable, we require the critical points satisfy a compatibility condition with the symplectic form. Our compatibility condition is, roughly, a local version of the condition on the Morse function in Weinstein manifolds~\cite{cieliebak2012stein}. Let $p \in M$ be a critical point of a Morse function with Morse index $n_p$. We say that $p$ is compatible with the symplectic form $\omega$ if there is a compatible triple $(\omega, g, J)$ such that $p$ satisfies the Morse-Smale condition and on an open neighbourhood $U_p$ of $p$ the unstable (stable) manifold of $p$ is: 
\begin{enumerate}
\item[] coisotropic (isotropic) if $n<n_p$,
\item[] Lagrangian (Lagrangian) if $n=n_p$,
\item[] isotropic (cisotropic) if  $n>n_p$.
\end{enumerate}
\begin{defn} \label{def:morse}
A Morse function $f$ is compatible with $\omega$ if there is a metric $g$, compatible with $\omega$, making all critical points of $f$ compatible.
\end{defn} 
\begin{rem}
This condition is satisfied by zeros of the gradient-like vector field on a Weinstein manifold (see Ref.~\cite{cieliebak2012stein}, Proposition 11.9).
\end{rem}
The following Lemma shows that any Morse function can be made compatible by diffeomorphism.\newpage
\begin{lem} \label{lem:what}
Let $f$ be a Morse function on a symplectic manifold $(M, \omega)$ of dimension $2n$. There is a diffeomorphism $\phi$ equal to the identity outside a neighbourhood of each critical point $p$ of $f$, such that $f$ is compatible with $\phi^\ast \omega$. Moreover, the diffeomorphism can be chosen such that in Darboux coordinates on a neighbourhood of $p$, $\omega$ and $f$ take the form
$$  \omega = \sum_{i=1}^n dx_{2i-1} \wedge dx_{2i} , \quad \phi^\ast f =f(0)+\frac{1}{2} \sum_{i=1}^{n_p} (-x_{2i-1}^2+x_{2i}^2 ) + \frac{1}{2}\sum_{i=n_p+1}^n (x_{2i-1}^2+x_{2i}^2),$$
if $n_p \leq n$ and
$$  \omega  = \sum_{i=1}^n dx_{2i-1} \wedge dx_{2i} , \quad \phi^\ast f = f(0)+ \frac{1}{2}\sum_{i=1}^{2n-n_p} (-x_{2i-1}^2+x_{2i}^2)  + \frac{1}{2}\sum_{i=2n_p-n+1}^n (- x_{2i-1}^2-x_{2i}^2),$$
if $ n_p>n$.
\end{lem}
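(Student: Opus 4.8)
The plan is to build $\phi$ as a product of diffeomorphisms supported in disjoint balls around the critical points of $f$, which reduces everything to a single critical point $p$ of index $n_p$, with $\phi$ the identity elsewhere. Write $Q$ for the model quadratic form displayed in the statement for the relevant range of $n_p$; it has exactly $n_p$ negative squares, hence is nondegenerate of index $n_p$. The point to notice first is that one cannot keep $\omega$ fixed: at $p$ the pair $(\mathrm{Hess}_p f,\omega_p)$ determines a conjugacy class of the infinitesimally symplectic endomorphism $\omega_p^{-1}\mathrm{Hess}_p f\in\mathfrak{sp}(2n)$ which in general is not the one realised by $Q$ together with the standard symplectic form, and a diffeomorphism fixing $p$ acts on this pair through a single $L=d\phi_p\in\mathrm{GL}(2n)$, leaving that class unchanged. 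One must therefore let the form move: for the assertion ``$f$ is compatible with $\phi^\ast\omega$'' the Hessian of $f$ is untouched while $(\phi^\ast\omega)_p=L^\ast\omega_p$ ranges over all symplectic forms as $L$ varies, which is exactly the room needed.

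Concretely, I would first take Darboux coordinates $x=(x_1,\dots,x_{2n})$ for $\omega$ on a ball $U$ about $p$ with $x(p)=0$, so that $\omega=\sum_i dx_{2i-1}\wedge dx_{2i}$ and $f=f(p)+f_0$ with $f_0$ having a nondegenerate critical point of index $n_p$ at the origin. By the Morse lemma there is a germ of diffeomorphism $\phi$ fixing $0$ with $\phi^\ast Q=Q\circ\phi=f_0$; since two such germs differ by a linear reflection preserving $Q$ (for instance $x_1\mapsto-x_1$), one may take $\phi$ orientation preserving and then extend it to a diffeomorphism of $M$ equal to the identity outside $U$ by the standard isotopy-of-germs-plus-cut-off argument. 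In this Darboux chart $\omega$ is already standard and $f$ is carried to $f(p)+Q$ by the evident further coordinate change, which is the \emph{moreover} clause, the cases $n_p\le n$ and $n_p>n$ matching the two displayed models for $Q$.

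It remains to verify that $f$ is compatible with $\phi^\ast\omega$. Near $p$ I would take the metric $g=\phi^\ast g_0$ and almost complex structure $J=\phi^\ast J_0$, where $g_0=\sum_i dx_i^2$ and $J_0$ is the standard structure: the pull-back of a compatible triple is a compatible triple, so $(\phi^\ast\omega,g,J)$ is compatible near $p$, and $g$ extends over $M$ to a metric compatible with $\phi^\ast\omega$ because the space of such metrics is non-empty and contractible. By naturality of the gradient under pull-back, $\mathrm{grad}_g f=\mathrm{grad}_{\phi^\ast g_0}(\phi^\ast Q)=\phi^\ast\mathrm{grad}_{g_0}Q$ on $U$, and $\mathrm{grad}_{g_0}Q$ is the linear vector field $\sum_i\epsilon_i x_i\partial_{x_i}$ (where $Q=\tfrac12\sum_i\epsilon_i x_i^2$), whose descending gradient flow has unstable manifold $N_0=\mathrm{span}\{\partial_{x_i}:\epsilon_i=-1\}$ of dimension $n_p$ and stable manifold $P_0=\mathrm{span}\{\partial_{x_i}:\epsilon_i=+1\}$ of dimension $2n-n_p$. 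Hence the stable and unstable manifolds of $f$ at $p$ are $\phi^{-1}(P_0)$ and $\phi^{-1}(N_0)$, and since $N_0,P_0$ are linear subspaces and $\phi^\ast\omega=\phi^\ast(\sum_i dx_{2i-1}\wedge dx_{2i})$ on $U$, the differential $d\phi$ identifies their tangent spaces along $\phi^{-1}(N_0),\phi^{-1}(P_0)$ with $N_0,P_0$; thus $\phi^{-1}(N_0)$ is $\phi^\ast\omega$-isotropic exactly when $N_0$ is isotropic for $\sum_i dx_{2i-1}\wedge dx_{2i}$, and likewise for coisotropic and Lagrangian. A short computation from the explicit indexing of $Q$ shows that $N_0$ is isotropic and $P_0$ coisotropic when $n_p<n$, both are Lagrangian when $n_p=n$, and their roles interchange when $n_p>n$, which is precisely the local condition at $p$ required by Definition~\ref{def:morse}.

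Finally, having fixed $g$ near every critical point, I would perturb $g$ on the complement of these neighbourhoods to make $(f,g)$ Morse--Smale; this is generic and leaves the local models untouched. The essential inputs are thus the Morse and Darboux lemmas; the only step that needs genuine care is keeping the two pull-back directions consistent (this is why one normalises via $\phi^\ast Q=f_0$, so that $\phi^\ast g_0$ is the correct metric), together with the elementary verification of the isotropy types of $N_0$ and $P_0$ and the routine extension and transversality arguments.
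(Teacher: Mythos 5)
Your construction coincides with the paper's: fix Darboux coordinates for $\omega$, use the Morse lemma to produce an orientation-preserving diffeomorphism germ carrying $f$ to the model quadratic, and extend it to a diffeomorphism of $M$ equal to the identity outside a neighbourhood of each critical point. The paper's proof consists of exactly those two steps; your additional material (the $\mathfrak{sp}(2n)$-conjugacy obstruction explaining why $\omega$ must be moved, the verification of the isotropy types of the model stable and unstable manifolds, and the Morse--Smale perturbation of the metric away from the critical points) is correct and simply makes explicit what the paper leaves implicit.
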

\begin{proof}
Since the critical point is Morse, there is always diffeomorphism germ taking $f$ to the required local form, and we may choose this germ to be orientation preserving. This can then be smoothly extended to a diffeomorphism $\phi$ of $M$ equal to the identity outside a neighbourhood of each critical point (see Ref.~\cite{palais1959natural}).
%
\end{proof}

\section{Local Solutions} \label{sec:local}

Our goal is to count the number of harmonic $d_f+d^\Lambda_f$ forms. Given Proposition~\ref{prop:hd} we need only consider the primitive harmonic $k$-forms (which must satisfy $k \leq n$). Akin to Witten's original paper~\cite{witten1982supersymmetry}, the idea is that harmonic forms become localized to critical points of $f$ as $f$ becomes large. In this section we give a local description of the harmonic forms in this limit. Because we choose our critical points to be compatible, local solutions of the Witten Laplacian (quantum harmonic oscillator groundstates) are primitive forms, so are necessarily in the kernel of $\mathcal{D}_{Pf}$. The main effort of this section is in showing that no other local solutions exist. 

We choose the metric $g$ to be Euclidean on the neighbourhoods of $U_p$ of each critical point of $f$ and use the local coordinates of Lemma~\ref{lem:what}. Explicitly for $f$ we write $f = T \sum_{i} \lambda_i x_i^2/2$, where $\lambda_i = \pm 1$ and $T>0$ is real. To make the dependence on $T$ explicit, we will use the notation $\mathcal{D}_T$ for $\mathcal{D}_f$, which will become useful in Section~\ref{sec:global}. Additionally, we will write $\mathcal{D}_{PT}$ for the Witten-deformed version of $\mathcal{D}_P$.
\begin{prop} \label{prop:BIGP}
On $\mathbb{R}^{2n}$ with the a Euclidean metric, standard symplectic form and function $f$ with critical point of Morse index $n_p$ as given in Lemma~\ref{lem:what}, $L^2$ primitive solutions of $\mathcal{D}_{T} \alpha = 0$ are in correspondence with solutions of $\Delta_{d_f} \alpha = 0$ for $n_p \leq n$ and do not exist for $n_p >n$.  Explicitly, for $n_p\leq n$ the kernel of $\mathcal{D}_{T}$ (acting on primitive $k$-forms) is one-dimensional, generated by
$$\alpha = {\rm exp} \left ( -\frac{T}{2} \sum_{i=1}^{2n} x_i^2 \right ) dx_{1} \wedge dx_{3} \wedge \ldots \wedge dx_{2n_p-1}.$$
\end{prop}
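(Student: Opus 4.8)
The plan is to reduce $\mathcal{D}_T\alpha=0$ on primitive forms to an explicit harmonic-oscillator problem, produce the solution $\alpha_0$ by hand, and then show that \emph{every} primitive $L^2$ solution is $d_f$-harmonic, so that the count follows from the classical Witten computation. The first step is to record, exactly as in the proof characterising $\ker\mathcal{D}_P$, that for a primitive $L^2$ $k$-form $\alpha$ on $\mathbb{R}^{2n}$ one has $\mathcal{D}_T\alpha=0$ iff $\mathcal{D}_{PT}\alpha=0$ iff $d_f\alpha=0$ and $d_f^{\Lambda\ast}d_f^\ast\alpha=0$ (the condition $d_f^\Lambda\alpha=0$ being automatic, since $d_f^\Lambda\alpha=-\Lambda d_f\alpha$ on primitive forms). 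The one delicate point on the non-compact $\mathbb{R}^{2n}$ is the integration by parts underlying this positivity argument: $L^2$ solutions of $\mathcal{D}_T\alpha=0$ are smooth by ellipticity, and since the scalar principal part of $\mathcal{D}_T$ is $(\Delta+T^2|x|^2)^2$ plus lower-order terms, the standard a priori estimates for Schr\"odinger-type operators force Gaussian decay of $\alpha$ and of all its derivatives; this legitimises every subsequent integration by parts.

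\textbf{The explicit solution.} Writing $f=\tfrac T2\sum_i\lambda_i x_i^2$ with $\lambda_i=\pm1$, one has $d_f=\sum_i dx^i\wedge(\partial_i+T\lambda_i x_i)$ and $d_f^\ast=\sum_i\iota_{\partial_i}(-\partial_i+T\lambda_i x_i)$. I would then check by a direct computation that $\alpha_0=\exp(-\tfrac T2\sum_i x_i^2)\bigwedge_{i:\lambda_i=-1}dx^i$ is killed term by term by both $d_f$ and $d_f^\ast$: in each summand either the first-order operator $\partial_i\pm Tx_i$ annihilates the Gaussian, or the factor $dx^i$ is already present (for $d_f$), resp.\ absent (for $d_f^\ast$). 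Hence $d_f\alpha_0=d_f^\ast\alpha_0=0$, so $\Delta_{d_f}\alpha_0=0$ and a fortiori $\mathcal{D}_T\alpha_0=0$. For $f$ as in Lemma~\ref{lem:what} with $n_p\le n$ one has $\{i:\lambda_i=-1\}=\{1,3,\dots,2n_p-1\}$, which is the stated generator; it has degree $n_p\le n$ and contains no even-indexed $dx^i$, so $\Lambda\alpha_0=0$ and $\alpha_0$ is primitive. For $n_p>n$ the analogous form has degree $n_p>n$, hence is not primitive.

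\textbf{Uniqueness.} The main work is to show any primitive $L^2$ solution is a multiple of $\alpha_0$. Let $\alpha$ be such a solution ($d_f\alpha=0$, $d_f^{\Lambda\ast}d_f^\ast\alpha=0$, $\Lambda\alpha=0$) and set $\gamma=d_f^\ast\alpha$. From $d_f\alpha=\Lambda\alpha=0$ we get $d_f^\Lambda\alpha=d_f\Lambda\alpha-\Lambda d_f\alpha=0$, and the flat-space version of the K\"ahler identity $d_f^\Lambda d_f^\ast+d_f^\ast d_f^\Lambda=0$ (valid on $\mathbb{R}^{2n}$ since the deformation respects the splitting $d_f=\partial_f+\bar\partial_f$ and flatness kills the curvature term — the deformed analogue of the K\"ahler equalities recalled in the proof of Proposition~\ref{prop:D}) gives $d_f^\Lambda\gamma=-d_f^\ast d_f^\Lambda\alpha=0$; also $d_f^{\Lambda\ast}\gamma=d_f^{\Lambda\ast}d_f^\ast\alpha=0$ by hypothesis, so $\Delta_{d_f^\Lambda}\gamma=0$. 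Since $\Delta_{d_f^\Lambda}=\Delta_{d_f}$ on flat $\mathbb{R}^{2n}$, the classical Witten computation for a nondegenerate quadratic (Ref.~\cite{zhang2001lectures}, cf.\ Ref.~\cite{witten1982supersymmetry}) identifies $\ker\Delta_{d_f}$ with the one-dimensional space $\mathbb{R}\alpha_0$, so $\gamma\in\mathbb{R}\alpha_0$. But $(\gamma,\alpha_0)=(d_f^\ast\alpha,\alpha_0)=(\alpha,d_f\alpha_0)=0$ (integration by parts being legitimate by the Gaussian decay above), so $\gamma=0$. Therefore $d_f^\ast\alpha=0$, whence $\Delta_{d_f}\alpha=0$ and $\alpha\in\mathbb{R}\alpha_0$. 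For $n_p\le n$ this shows $\ker\mathcal{D}_T$ on primitive forms equals $\mathbb{R}\alpha_0$, of dimension one; for $n_p>n$, $\alpha_0$ has degree exceeding $n$, so no nonzero primitive solution exists.

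\textbf{Main obstacle.} The delicate step is the upgrade from $d_f^{\Lambda\ast}d_f^\ast\alpha=0$ to $d_f^\ast\alpha=0$: the first is a priori strictly weaker, and closing the gap uses simultaneously the primitivity of $\alpha$ (which makes $d_f^\Lambda\alpha$ vanish) and the flat K\"ahler identities. Compatibility of the critical point with $\omega$ is what enters here — it is exactly compatibility that forces the index set $\{i:\lambda_i=-1\}$ to span a coisotropic (Lagrangian) coordinate subspace, and hence the oscillator ground state $\alpha_0$ to be a primitive form; the remark in Section~\ref{sec:lef} on the existence of "non-Lagrangian" primitive exterior forms shows that this primitivity is a genuine restriction rather than an automatic feature, and it is this restriction that rules out spurious local solutions when $n_p>n$.
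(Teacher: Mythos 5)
Your setup (reduction to $d_f\alpha=0$, $d_f^{\Lambda\ast}d_f^\ast\alpha=0$ via integration by parts justified by Gaussian decay) and your verification of the explicit solution $\alpha_0$ are fine and match the paper. But the uniqueness argument rests on two identities that are false in this setting: you invoke the ``deformed K\"ahler identity'' $d_f^\Lambda d_f^\ast + d_f^\ast d_f^\Lambda = 0$ and the equality $\Delta_{d_f^\Lambda}=\Delta_{d_f}$ on flat $\mathbb{R}^{2n}$. The undeformed versions do hold in these coordinates, but conjugation by $e^{\pm f}$ destroys them: the paper's Lemma~\ref{lem:swi} computes $d_f^\ast d_f^\Lambda + d_f^\Lambda d_f^\ast = C_f^\dagger = 2T\sum_i(\lambda_{2i-1}+\lambda_{2i})e^\dagger_{2i-1}e^\dagger_{2i}$, and Lemma~\ref{lem:tt} gives $\Delta_{d_f}-\Delta_{d_f^\Lambda}=M_f\neq 0$; both corrections vanish precisely when $\lambda_{2i-1}+\lambda_{2i}=0$ for all $i$, i.e.\ only in the Lagrangian case $n_p=n$. (One can check directly that $\ker\Delta_{d_f^\Lambda}$ is generated by the Gaussian times a $(2n-n_p)$-form, not by $\alpha_0$, so the two Laplacians genuinely differ.) Consequently your chain $d_f^\Lambda\gamma = -d_f^\ast d_f^\Lambda\alpha = 0$ breaks down: the correct statement is $d_f^\Lambda\gamma = C_f^\dagger\alpha$, which need not vanish, and the subsequent identification of $\gamma$ with an element of $\ker\Delta_{d_f}$ fails. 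Your argument therefore proves the proposition only for $n_p=n$, which is exactly the case the paper dismisses in one line (``$M_f=C_f=C_f^\dagger=0$'').

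The entire difficulty of the paper's proof is the control of these correction terms. It derives the necessary condition $\|\Delta_{d_f}\alpha\|^2 - \|C_f^\dagger\alpha\|^2 - (d_f^\ast\alpha, M_f d_f^\ast\alpha)=0$ (Lemma~\ref{lem:sat}), expands $\alpha$ in harmonic-oscillator eigenfunctions, and shows by explicit spectral and combinatorial estimates (including the bound on $\|C_f^\dagger\alpha\|^2$ in Lemma~\ref{lem:bigo}, which exploits the simplicial structure of primitive forms) that the expression is strictly positive unless $\alpha$ lies in $\ker\Delta_{d_f}$ (for $n_p<n$) or unless $\alpha=0$ (for $n_p>n$). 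None of this is circumvented by your approach; you would need to supply a replacement for the false K\"ahler identities to close the gap.
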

To prove we proceed in stages. We first introduce the operators (in coordinates)
$D_i = (\partial_i + \partial_i f)$ and $D^\dagger_i = (-\partial_i + \partial_i f)$ with $\partial_i = \partial/\partial x_i$ differential operators in the local coordinates. These operators satisfy the commutation relations
 \begin{equation} \label{eq:comm} [D_i, D_j] = 0, \quad [D^\dagger_i, D^\dagger_j]=0, \quad [D_i, D^\dagger_j] = 2T \lambda_i \delta_{ij},\end{equation}
We also define the operators $e_i = dx_i \wedge$ and $e^\dagger_i = \iota_{X_i}$, where $X_i$ is the coordinate vector field associated to $x_i$. Then in terms of these operators, the differentials $d_f$, $d_f^\ast$, $d_f^\Lambda$ and $d_f^{\Lambda \ast}$ become
 $$ d_f = \sum_{i=1}^{2n} D_i a_i, \quad d^\ast =  \sum_{i=1}^{2n}  D^\dagger_i a^\dagger_i, \quad d^\Lambda = \sum_{i=1}^{2n} D_{Ji} a_{i}^\dagger, \quad d^{\Lambda \ast} =  \sum_{i=1}^{2n} D^\dagger_{Ji} a_{i},$$
where $D_{J(2i-1)} = D_{2i}$,  $D_{J(2i)} = -D_{2i-1}$ and similarly for $D^\dagger$ (note the minus sign). Now, as is well-known~\cite{witten1982supersymmetry} in such coordinates the Witten-deformed Hodge Laplacian takes the form
\begin{equation} \label{eq:witlap}
\Delta_{d_f} = d_f^\ast d_f + d_f d_f^\ast= \sum_{i=1}^{2n} \Big (- \partial_i^2 + T^2 x_i^2  - T \Big ) +T \sum_{i=1}^{2n} (1-\lambda_i + 2\lambda_i  e_i e^\dagger_i).
\end{equation}
We now give a local expression for the Laplacian associated to the $d^\Lambda$ cohomology.
\begin{lem} \label{lem:tt}
The local form of Laplacian associated to $d^\Lambda$ is written as
$$\Delta_{d_f^\Lambda} = d_f^{\Lambda \ast} d_f^\Lambda + d_f^\Lambda d_f^{\Lambda \ast} =  \sum_{i=1}^{2n} \Big (- \partial_i^2 + T^2 x_i^2  - T \Big ) +T \sum_{i=1}^{2n} (1+\lambda_{Ji} - 2\lambda_{Ji}  e_i e^\dagger_i),$$
where $\lambda_{J(2i-1)} = \lambda_{2i}$, and $\lambda_{J2i} = \lambda_{2i-1}$ (note the lack of minus sign). 
\end{lem}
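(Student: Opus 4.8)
The plan is to run, for $d_f^\Lambda$, exactly the normal-ordering computation that produces the Witten Laplacian \eqref{eq:witlap}, but with the symplectically-twisted bosonic operators $D_{Ji}$, $D_{Ji}^\dagger$ in place of $D_i$, $D_i^\dagger$. First I would substitute the local expressions $d_f^\Lambda = \sum_i D_{Ji}e_i^\dagger$ and $d_f^{\Lambda\ast} = \sum_i D_{Ji}^\dagger e_i$ into $\Delta_{d_f^\Lambda} = d_f^{\Lambda\ast}d_f^\Lambda + d_f^\Lambda d_f^{\Lambda\ast}$. Since the $D_{Ji}$, $D_{Ji}^\dagger$ act only on the scalar part of a form and the $e_i$, $e_i^\dagger$ only on the exterior part, the two families commute, and every monomial in the resulting double sum factorizes as (a product of two twisted bosonic operators) times (a fermionic bilinear $e_ie_j^\dagger$ or $e_i^\dagger e_j$). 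I would then normal-order the fermions using the Clifford relation $\{e_i,e_j^\dagger\}=\delta_{ij}$; this splits $\Delta_{d_f^\Lambda}$ into a ``scalar'' piece $\sum_i D_{Ji}^\dagger D_{Ji}$ and a ``fermionic'' piece built from the commutators $[D_{Ji},D_{Jj}^\dagger]$ multiplying $e_ie_i^\dagger$, in complete parallel with the identity $\Delta_{d_f}=\sum_i D_i^\dagger D_i + \sum_i[D_i,D_i^\dagger]e_ie_i^\dagger$.

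The only genuinely new algebraic input is the commutator of the twisted bosons. Using \eqref{eq:comm} together with $D_{J(2k-1)}=D_{2k}$, $D_{J(2k)}=-D_{2k-1}$ (and similarly for the daggers), a short case check — odd/odd and even/even give a nonzero contribution only on the diagonal, the mixed cases vanish identically — yields
\[ [D_{Ji}, D_{Jj}^\dagger] = 2T\lambda_{Ji}\,\delta_{ij}, \]
with $\lambda_{J(2k-1)}=\lambda_{2k}$, $\lambda_{J(2k)}=\lambda_{2k-1}$: in the even/even case the two sign flips from $D_{J(2k)}=-D_{2k-1}$ cancel, which is precisely why $\lambda_{Ji}$ carries no minus sign even though $D_{Ji}$ does. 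Feeding this into the fermionic piece gives $-2T\sum_i\lambda_{Ji}e_ie_i^\dagger$, the overall minus arising from the reordering that puts the fermion bilinears into the standard position $e_ie_j^\dagger$. For the scalar piece, the map $i\mapsto Ji$ is a signed permutation of $\{1,\dots,2n\}$ and the signs square away in $D_{Ji}^\dagger D_{Ji}$, so $\sum_i D_{Ji}^\dagger D_{Ji}=\sum_j D_j^\dagger D_j=\sum_j(-\partial_j^2+T^2x_j^2-T\lambda_j)$, exactly as in the Hodge case; combining $D_{Ji}D_{Ji}^\dagger=D_{Ji}^\dagger D_{Ji}+2T\lambda_{Ji}$ produces $\sum_j(-\partial_j^2+T^2x_j^2+T\lambda_j)$.

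Assembling the two pieces and using $\sum_i\lambda_{Ji}=\sum_i\lambda_i$ to absorb the excess $+T\lambda$ shift, then adding and subtracting $2nT$, rewrites $\Delta_{d_f^\Lambda}$ in the normalized form $\sum_i(-\partial_i^2+T^2x_i^2-T)+T\sum_i(1+\lambda_{Ji}-2\lambda_{Ji}e_ie_i^\dagger)$ asserted in the statement. I expect the main obstacle to be purely bookkeeping of signs: first checking that $d_f^{\Lambda\ast}$ really is $\sum_i D_{Ji}^\dagger e_i$ (the minus sign in $D_{J(2k)}$ must be consistent with taking the $L^2$-adjoint of $d_f^\Lambda$, since $(\partial_i)^\dagger=-\partial_i$ and $(e_i^\dagger)^\dagger=e_i$), and then tracking the cancellation of signs in the twisted commutator — an error at either place would propagate silently into the coefficient of $e_ie_i^\dagger$. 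No analysis is needed here; the whole statement is an operator identity on $\mathbb{R}^{2n}$.
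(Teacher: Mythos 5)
Your proposal is correct and follows essentially the same route as the paper: write $\Delta_{d_f^\Lambda}$ in terms of the $D_{Ji}$, normal-order the fermionic operators, and use $[D_{Ji},D_{Jj}^\dagger]=2T\lambda_{Ji}\,\delta_{ij}$ together with $\sum_i\lambda_{Ji}=\sum_i\lambda_i$. The only slip is labelling the scalar (delta-function) piece as $\sum_i D_{Ji}^\dagger D_{Ji}$ --- since $d_f^\Lambda$ carries the contractions $e_i^\dagger$ and $d_f^{\Lambda\ast}$ the wedges $e_i$, the delta term actually comes from $d_f^\Lambda d_f^{\Lambda\ast}$ and equals $\sum_i D_{Ji}D_{Ji}^\dagger$ --- but your final assembly uses the correct $+T\lambda_j$ normalization, so the result stands.
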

\begin{proof}
Expressed in terms of the $D$ operators we find
 $$ \Delta_{d_f^\Lambda} = \sum_{i,j=1}^{2n} D_{Ji} D_{Jj}^\dagger a_i^\dagger a_j + D_{Jj}^\dagger D_{Ji}e_j e_i^\dagger $$
 We can then expand $\Delta_{d^\Lambda_f}$ as
 $$ \Delta_{d_f^\Lambda} = \sum_{i=1}^{2n} D_i D^\dagger_i - 2 T \sum_{i=1}^{2n} \lambda_{Ji} e_i e_i^\dagger.$$
Expanding the $D$ operators we find
  $$ \Delta_{d_f^\Lambda} = \sum_{i=1}^{2n} \Big(- \partial_i^2 + T^2(x_i)^2 -T \Big)  + T\sum_{i=1}^{2n} \Big ( 1+ \lambda_{Ji}  - 2 \lambda_{Ji} e_i e_i^\dagger \Big ).$$
\end{proof}
In the symbol calculation for $\mathcal{D}$ (Proposition~\ref{prop:D}) we made use of the K\"{a}hler relations $d d^{\Lambda \ast} \cong - d^{\Lambda \ast} d$, $d^\ast d^\Lambda \cong - d^\Lambda d^\ast$. In our local coordinates, these relations are promoted to equalities. However, they do not hold for the deformed differentials. 
\begin{lem} \label{lem:swi}
In the local coordinate system we have
$$d_f d_f^{\Lambda \ast} + d_f^{\Lambda \ast} d_f = -2 T \sum_{i=1}^n(\lambda_{2i-1}+\lambda_{2i}) e_{2i-1} e_{2i} = C_f $$
$$d^\ast_f d_f^{\Lambda} + d_f^{\Lambda } d^\ast_f = 2T  \sum_{i=1}^n(\lambda_{2i-1}+\lambda_{2i}) e^\dagger_{2i-1} e^\dagger_{2i} = C^\dagger_f $$
\end{lem}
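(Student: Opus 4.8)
The plan is to carry out the computation entirely within the local algebra of operators set up above, in which the scalar first-order operators $D_i, D_i^\dagger$ act on coefficient functions while the Clifford generators $e_i = dx_i\wedge$ and $e_i^\dagger = \iota_{X_i}$ act on the exterior factor; since they act on different tensor slots, the two families commute. Using $d_f = \sum_i D_i e_i$ and $d_f^{\Lambda\ast} = \sum_j D_{Jj}^\dagger e_j$ and moving each $e$ past each $D$, one obtains
\[ d_f d_f^{\Lambda\ast} + d_f^{\Lambda\ast} d_f = \sum_{i,j}\left( D_i D_{Jj}^\dagger\, e_i e_j + D_{Jj}^\dagger D_i\, e_j e_i\right). \]
First I would write $D_i D_{Jj}^\dagger = D_{Jj}^\dagger D_i + [D_i, D_{Jj}^\dagger]$ in the first summand. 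This splits the right-hand side into $\sum_{i,j} D_{Jj}^\dagger D_i\,(e_i e_j + e_j e_i)$ plus $\sum_{i,j}[D_i, D_{Jj}^\dagger]\, e_i e_j$; the first piece vanishes identically because $e_i e_j + e_j e_i = 0$ for all $i,j$. Hence
\[ d_f d_f^{\Lambda\ast} + d_f^{\Lambda\ast} d_f = \sum_{i,j}[D_i, D_{Jj}^\dagger]\, e_i e_j. \]

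Next I would evaluate the surviving term using $[D_i, D_k^\dagger] = 2T\lambda_i\delta_{ik}$ together with the twist convention $D_{J(2k-1)}^\dagger = D_{2k}^\dagger$, $D_{J(2k)}^\dagger = -D_{2k-1}^\dagger$. For $j = 2k-1$ the only contribution comes from $i = 2k$ and equals $2T\lambda_{2k}\, e_{2k}e_{2k-1}$; for $j = 2k$ it comes from $i = 2k-1$ and equals $-2T\lambda_{2k-1}\, e_{2k-1}e_{2k}$. Summing over $k$ and using $e_{2k}e_{2k-1} = -e_{2k-1}e_{2k}$ collapses this to $-2T\sum_{k=1}^n(\lambda_{2k-1}+\lambda_{2k})\, e_{2k-1}e_{2k} = C_f$. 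The second identity is obtained in exactly the same way from $d_f^\ast = \sum_i D_i^\dagger e_i^\dagger$ and $d_f^\Lambda = \sum_j D_{Jj}\, e_j^\dagger$: the symmetric part again drops out because $e_i^\dagger e_j^\dagger + e_j^\dagger e_i^\dagger = 0$, leaving $\sum_{i,j}[D_i^\dagger, D_{Jj}]\, e_i^\dagger e_j^\dagger$, and the analogous bookkeeping yields $2T\sum_{k=1}^n(\lambda_{2k-1}+\lambda_{2k})\, e_{2k-1}^\dagger e_{2k}^\dagger = C_f^\dagger$.

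No analysis enters here: the statement is a purely algebraic identity in the tensor product of the (deformed) Weyl algebra generated by the $D_i, D_i^\dagger$ with the Clifford algebra generated by the $e_i, e_i^\dagger$, and follows by direct expansion. The only place where care is needed is the sign bookkeeping: the minus sign hidden in $D_{J(2k)}^\dagger = -D_{2k-1}^\dagger$, the antisymmetry $e_{2k}e_{2k-1} = -e_{2k-1}e_{2k}$, and the fact that $[D_i, D_{Jj}^\dagger]$ differs in sign from $[D_i, D_j^\dagger]$ when $j$ is even all interact. Once the index conventions of Lemma~\ref{lem:what} and the preceding displays are pinned down, however, there is nothing subtle and the identity is mechanical.
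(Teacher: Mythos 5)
Your proposal is correct and follows essentially the same route as the paper: both reduce the anticommutator to $\sum_{i,j}[D_i, D^\dagger_{Jj}]\,e_i e_j$ via the antisymmetry of the wedge operators and then evaluate using $[D_i,D_j^\dagger]=2T\lambda_i\delta_{ij}$ and the $J$-twist sign convention. The sign bookkeeping in your evaluation matches the paper's.
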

\begin{proof}
For the first relation we have
$$ d_f d_f^{\Lambda \ast} + d_f^{\Lambda \ast} d_f = \sum_{i,j=1}^{2n} [D_{i}, D^\dagger_{Jj}] e_i e_j,$$
separating out into sums up to $n$ we find
$$ d_f d_f^{\Lambda \ast} + d_f^{\Lambda \ast} d_f= \sum_{i,j=1}^n \Big ( [D_{2i}, D^\dagger_{J(2j-1)}] e_{2i} e_{2j-1}+ [D_{2i-1}, D^\dagger_{J(2j)}] e_{2i-1} e_{2j}\Big ).$$
Which then becomes
$$ d_f d_f^{\Lambda \ast} + d_f^{\Lambda \ast} d_f= \sum_{i,j=1}^n \Big ( [D_{2i}, D^\dagger_{2j}] e_{2i} e_{2j-1}- [D_{2i-1}, D^\dagger_{2j-1}] e_{2i-1} e_{2j}\Big ),$$
using the commutation relations \eqref{eq:comm} we then find
$$ d_f d_f^{\Lambda \ast} + d_f^{\Lambda \ast} d_f = -2 T \sum_{i=1}^n (\lambda_{2i-1}+\lambda_{2i})e_{2i-1}e_{2i}.$$
For the second relation we have
$$ d^\ast_f d_f^{\Lambda } + d_f^{\Lambda} d^\ast_f = \sum_{i,j=1}^{2n} [D^\dagger_{i}, D_{Jj}] e^\dagger_i e^\dagger_j,$$
and a similar calculation yields the result.
\end{proof}
We now give some relations for the $C_f$ and $C^\dagger_f$ operators. We define the operator
$$M_f = \Delta_{d_f}-\Delta_{d^\Lambda_f}=   2 T \sum_{i=1}^n(\lambda_{2i-1} + \lambda_{2i}) ( e_{2i-1} e^\dagger_{2i-1}+ e_{2i} e^\dagger_{2i}-1).$$
The following identities are easily shown.
\begin{lem} \label{lem:HUB}
The operators $C_f$, $C_f^\dagger$, and $M_f$ satisfy
\begin{align*}
&[d_f^\Lambda, M_f] = - [d_f, C_f^\dagger], &\quad  [d_f, M_f] &= [d_f^\Lambda, C_f], \quad & [M_f, \Lambda] & = 2 C_f^\dagger, \\
&[d_f^\ast, M_f] =  [d_f^{\Lambda \ast}, C^\dagger_f], &\quad  [d_f^{\Lambda \ast}, M_f] &= - [d_f^\ast, C_f],  \quad & [M_f, L] &= -2C_f .
\end{align*}
\end{lem}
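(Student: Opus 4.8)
The plan is to verify all six identities by direct computation in the local ``Clifford--Weyl'' algebra generated by the $e_i,e_i^\dagger$ and the $D_i,D_i^\dagger$, exploiting one observation: by Lemma~\ref{lem:swi} and the definition of $M_f$, the operators $M_f$, $C_f$, $C_f^\dagger$ are purely algebraic — built only from the $e_i,e_i^\dagger$ (times the $T$-dependent constants $\mu_j:=\lambda_{2j-1}+\lambda_{2j}$) with no $D$-operators. Since every $D_k,D_k^\dagger$ commutes with every $e_i,e_i^\dagger$, it commutes with $M_f$, $C_f$, $C_f^\dagger$. Moreover $M_f$, $C_f$, $C_f^\dagger$ all have even form-degree ($0$, $+2$, $-2$), so each bracket in the statement is an ordinary commutator, not a graded one.

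I would first dispatch the two ``algebraic'' identities $[M_f,\Lambda]=2C_f^\dagger$ and $[M_f,L]=-2C_f$. Writing $\Lambda$ and $L$ in the Darboux frame as $\sum_j e^\dagger_{2j-1}e^\dagger_{2j}$ and $\sum_j e_{2j-1}e_{2j}$ (up to a sign fixed by conventions), these reduce to the elementary Clifford identity $[\,e_k e_k^\dagger,\ e_l^\dagger\,]=-\delta_{kl}e_l^\dagger$ and its adjoint: the $-1$ term in $M_f$ drops out and the sum collapses to exactly $C_f^\dagger$, respectively $-C_f$.

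Next, for the four identities involving the differentials, I would substitute $d_f=\sum_i D_i e_i$, $d_f^\Lambda=\sum_i D_{Ji}e_i^\dagger$, $d_f^{\Lambda\ast}=\sum_i D_{Ji}^\dagger e_i$, $d_f^\ast=\sum_i D_i^\dagger e_i^\dagger$, pull the commuting $D$-operators out of each bracket, and reduce everything to the elementary commutators $[e_k^\dagger,M_f]=2T\mu_k e_k^\dagger$, $[e_k,M_f]=-2T\mu_k e_k$, $[e_k,C_f^\dagger]=\pm 2T\mu_k e^\dagger_{\bar k}$, $[e_k^\dagger,C_f]=\pm 2T\mu_k e_{\bar k}$, where $\bar k$ is the Darboux partner of $k$. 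Reassembling, both sides of each identity become a sum of the shape $\sum_k\mu_k D_{(\cdot)}e^{(\dagger)}_{(\cdot)}$, and matching them is a matter of the relabelling $D_{J(2i-1)}=D_{2i}$, $D_{J(2i)}=-D_{2i-1}$ (with its sign) against $\lambda_{J(2i-1)}=\lambda_{2i}$, $\lambda_{J(2i)}=\lambda_{2i-1}$ (without one). It suffices to carry this out for $[d_f^\Lambda,M_f]=-[d_f,C_f^\dagger]$ and $[d_f,M_f]=[d_f^\Lambda,C_f]$; the remaining two follow by taking adjoints, using $M_f^\ast=M_f$, $(C_f)^\ast=C_f^\dagger$ and $(AB-BA)^\ast=-[A^\ast,B^\ast]$ — and likewise $[M_f,L]=-2C_f$ is the adjoint of $[M_f,\Lambda]=2C_f^\dagger$, so three explicit computations suffice for all six.

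There is no substantive obstacle — the lemma genuinely is, as claimed, ``easily shown'' — so the only thing to watch is sign bookkeeping: the clash between the minus sign in $D_{J(2i)}=-D_{2i-1}$ and its absence in $\lambda_{J(2i)}=\lambda_{2i-1}$, together with the Clifford reordering signs $e_k^\dagger e_l^\dagger=-e_l^\dagger e_k^\dagger$, is exactly what turns the bare sums into the stated brackets, and it is worth writing out one case fully to fix conventions before invoking ``similarly'' for the rest.
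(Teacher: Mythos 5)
Your proposal is correct and is exactly the direct verification the paper leaves to the reader (the paper offers no proof beyond ``easily shown''): the key observation that $M_f$, $C_f$, $C_f^\dagger$ contain no $D$-operators and hence commute with them reduces everything to elementary $e_k,e_k^\dagger$ commutators, and I have checked that the sign bookkeeping with $D_{J(2i)}=-D_{2i-1}$ versus $\lambda_{J(2i)}=\lambda_{2i-1}$ works out as you describe. The adjoint trick ($M_f^\ast=M_f$, $C_f^\ast=C_f^\dagger$, $\Lambda^\ast=L$) legitimately cuts the six identities down to three explicit computations.
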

We are now in a position to give an expression for $\mathcal{D}_{PT}$
\begin{lem} \label{lem:op}
The operator $\mathcal{D}_{PT}$ has the local form 
$$\Delta_{d_f}^2  + d_f^{\Lambda \ast} d_f  C_f^\dagger + C_f  d_f^\ast d^\Lambda_f- C_f C^\dagger_f - d_f M_f d^\ast_f.$$
\end{lem}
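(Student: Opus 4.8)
The plan is to obtain the stated expression for $\mathcal{D}_{PT}$ by a purely algebraic manipulation of
$$\mathcal{D}_{PT} = d_f^\ast d_f d_f^\ast d_f + d_f^{\Lambda \ast} d_f d_f^\ast d_f^\Lambda + d_f d_f^\Lambda d_f^{\Lambda \ast} d_f^\ast,$$
using only the nilpotency relations $d_f^2 = (d_f^\ast)^2 = (d_f^\Lambda)^2 = (d_f^{\Lambda \ast})^2 = 0$, the definitions $\Delta_{d_f} = d_f d_f^\ast + d_f^\ast d_f$ and $\Delta_{d_f^\Lambda} = d_f^\Lambda d_f^{\Lambda \ast} + d_f^{\Lambda \ast} d_f^\Lambda$, the defining relation $\Delta_{d_f^\Lambda} = \Delta_{d_f} - M_f$, and the definitions $C_f = d_f d_f^{\Lambda \ast} + d_f^{\Lambda \ast} d_f$ and $C_f^\dagger = d_f^\ast d_f^\Lambda + d_f^\Lambda d_f^\ast$. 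In particular, primitivity of the forms on which $\mathcal{D}_{PT}$ acts plays no role, and the explicit local coordinate forms enter only through the identifications of $C_f$, $C_f^\dagger$ in Lemma~\ref{lem:swi} and the definition of $M_f$.

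First I would dispose of the first and third summands. Since $d_f^2 = (d_f^\ast)^2 = 0$ the cross terms in $\Delta_{d_f}^2$ vanish, giving $\Delta_{d_f}^2 = d_f^\ast d_f d_f^\ast d_f + d_f d_f^\ast d_f d_f^\ast$, so that the first summand equals $\Delta_{d_f}^2 - d_f d_f^\ast d_f d_f^\ast$. In the third summand I substitute $d_f^\Lambda d_f^{\Lambda \ast} = \Delta_{d_f^\Lambda} - d_f^{\Lambda \ast} d_f^\Lambda = \Delta_{d_f} - M_f - d_f^{\Lambda \ast} d_f^\Lambda$ and use $d_f \Delta_{d_f} d_f^\ast = d_f d_f^\ast d_f d_f^\ast$ (the $d_f^2$ contribution vanishing) to get $d_f d_f^\Lambda d_f^{\Lambda \ast} d_f^\ast = d_f d_f^\ast d_f d_f^\ast - d_f M_f d_f^\ast - d_f d_f^{\Lambda \ast} d_f^\Lambda d_f^\ast$. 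Adding the first and third summands, the $d_f d_f^\ast d_f d_f^\ast$ terms cancel, leaving
$$\mathcal{D}_{PT} = \Delta_{d_f}^2 - d_f M_f d_f^\ast - d_f d_f^{\Lambda \ast} d_f^\Lambda d_f^\ast + d_f^{\Lambda \ast} d_f d_f^\ast d_f^\Lambda.$$

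It then remains to verify the operator identity $d_f^{\Lambda \ast} d_f d_f^\ast d_f^\Lambda - d_f d_f^{\Lambda \ast} d_f^\Lambda d_f^\ast = d_f^{\Lambda \ast} d_f C_f^\dagger + C_f d_f^\ast d_f^\Lambda - C_f C_f^\dagger$, which converts the displayed form of $\mathcal{D}_{PT}$ into the one in the statement. Expanding $C_f$ and $C_f^\dagger$ by their definitions, the three products on the right split into two, two and four monomials in $d_f, d_f^\ast, d_f^\Lambda, d_f^{\Lambda \ast}$ respectively; collecting the eight monomials with their signs, the terms $d_f^{\Lambda \ast} d_f d_f^\Lambda d_f^\ast$ and $d_f d_f^{\Lambda \ast} d_f^\ast d_f^\Lambda$ cancel, while $d_f^{\Lambda \ast} d_f d_f^\ast d_f^\Lambda$ survives with coefficient $+1$ and $d_f d_f^{\Lambda \ast} d_f^\Lambda d_f^\ast$ with coefficient $-1$, which is exactly the left-hand side. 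There is no conceptual obstacle here; the only thing that needs care is the sign bookkeeping in this last step (in particular the minus sign built into $C_f$ and $C_f^\dagger$ via $D_{J(2i)} = -D_{2i-1}$), together with the discipline of introducing $M_f$ only through $\Delta_{d_f^\Lambda} = \Delta_{d_f} - M_f$ so that no further input is needed. One could alternatively carry out the whole computation directly in the local coordinates using Lemmas~\ref{lem:tt} and~\ref{lem:swi}, but the route above is shorter and makes clear why precisely this combination appears.
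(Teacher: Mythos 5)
Your proposal is correct and follows essentially the same route as the paper: both start from the displayed definition of $\mathcal{D}_{PT}$ and rearrange it using $\Delta_{d_f}=\Delta_{d_f^\Lambda}+M_f$ (Lemma~\ref{lem:tt}) together with the anticommutator identifications of $C_f$ and $C_f^\dagger$ (Lemma~\ref{lem:swi}). The only organizational difference is that the paper closes the computation by invoking $[d_f,M_f]=[d_f^\Lambda,C_f]$ from Lemma~\ref{lem:HUB} and reusing $d_f^\Lambda d_f^\ast+d_f^\ast d_f^\Lambda=C_f^\dagger$, whereas you finish with the purely formal identity $d_f^{\Lambda\ast}d_f d_f^\ast d_f^\Lambda - d_f d_f^{\Lambda\ast}d_f^\Lambda d_f^\ast = d_f^{\Lambda\ast}d_f C_f^\dagger + C_f d_f^\ast d_f^\Lambda - C_f C_f^\dagger$, which indeed holds by expanding the anticommutators and cancelling, so both versions are sound.
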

\begin{proof}
The operator $\mathcal{D}_{PT}$ is given as
$$ \mathcal{D}_{PT} = d_f^\ast d_f d_f^\ast d_f + d_f^{\Lambda \ast} d_f d_f^\ast d_f^\Lambda+  d_f d_f^\Lambda d_f^{\Lambda \ast} d_f^\ast.$$
From Lemma~\ref{lem:tt} we have
\begin{equation} \label{eq:what} \Delta_{d_f} = \Delta_{d^\Lambda_f} +M_f.
\end{equation}
Applying $d_f d^\ast_f$ to \eqref{eq:what} on the right and using the relations in Lemma~\ref{lem:swi} we find
\begin{equation} \label{eq:reap1a} 
d^{\Lambda \ast}_f d_f d_f^\ast d_f^\Lambda + d_f d_f^\Lambda d_f^{\Lambda \ast} d_f^\ast  = d_f d_f^\ast d_f d_f^\ast - M_f d_f d^\ast_f  + d_f^{\Lambda \ast} d_f  C_f^\dagger -  d^\Lambda_f C_f   d_f^\ast.
\end{equation}
which allows $\mathcal{D}_{PT}$ to be written as
$$ \mathcal{D}_{PT} = \Delta_{d_f}^2 - M_f d_f d^\ast_f  + d_f^{\Lambda \ast} d_f  C_f^\dagger -  d^\Lambda_f C_f   d_f^\ast.$$
Using the commutation relation $[d_f, M_f] = [d_f^\Lambda, C_f]$ we find 
$$ \mathcal{D}_{PT} = \Delta_{d_f}^2 - M_f d_f d^\ast_f  + d_f^{\Lambda \ast} d_f  C_f^\dagger -  C_f d^\Lambda_f   d_f^\ast - [d_f, M_f] d_f^\ast,$$
finally applying the relation  $d^\Lambda_f   d_f^\ast +  d_f^\ast d^\Lambda_f  = C_f^\dagger$ yields the result.
\end{proof}
\begin{lem} \label{lem:sat}
Any local solution must satisfy
$$ \| \Delta_{d_f} \alpha \|^2 - \|C^\dagger \alpha \|^2 - (d_f^\ast \alpha, M_f  d_f^\ast \alpha)_g  =0.$$
\end{lem}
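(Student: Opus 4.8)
The plan is to derive from $\mathcal{D}_{PT}\alpha=0$ that $\alpha$ is in fact $d_f+d_f^\Lambda$ harmonic, and then to compute $\|\Delta_{d_f}\alpha\|^2$ from that information. For the first step I would pair $\mathcal{D}_{PT}\alpha=0$ with $\alpha$ in the defining expression $\mathcal{D}_{PT}=d_f^\ast d_fd_f^\ast d_f+d_f^{\Lambda\ast}d_fd_f^\ast d_f^\Lambda+d_fd_f^\Lambda d_f^{\Lambda\ast}d_f^\ast$: integrating by parts term by term,
$$(\mathcal{D}_{PT}\alpha,\alpha)=\|d_f^\ast d_f\alpha\|^2+\|d_f^\ast d_f^\Lambda\alpha\|^2+\|d_f^{\Lambda\ast}d_f^\ast\alpha\|^2=0,$$
so $d_f^\ast d_f\alpha=d_f^{\Lambda\ast}d_f^\ast\alpha=0$; a further integration by parts gives $\|d_f\alpha\|^2=(d_f^\ast d_f\alpha,\alpha)=0$, hence $d_f\alpha=0$, and then $d_f^\Lambda\alpha=-\Lambda d_f\alpha=0$ since $\alpha$ is primitive. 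All of these integrations by parts are legitimate on $\mathbb{R}^{2n}$: $\mathcal{D}_{PT}$ is elliptic and agrees to leading order with $2(d_fd_f^\ast+d_f^\ast d_f)^2$, a power of a harmonic oscillator, so an $L^2$ solution $\alpha$ decays like $e^{-T|x|^2/2}$ together with all its derivatives, and $d_f\alpha$, $d_f^\ast\alpha$, $\Delta_{d_f}\alpha$, $C_f^\dagger\alpha$ are all in $L^2$.

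For the second step, since $d_f\alpha=0$ we have $\Delta_{d_f}\alpha=d_fd_f^\ast\alpha$, so using $[\Delta_{d_f},d_f^\ast]=0$,
$$\|\Delta_{d_f}\alpha\|^2=(d_fd_f^\ast\alpha,\Delta_{d_f}\alpha)=(d_f^\ast\alpha,\Delta_{d_f}d_f^\ast\alpha)=(d_f^\ast\alpha,\Delta_{d_f^\Lambda}d_f^\ast\alpha)+(d_f^\ast\alpha,M_fd_f^\ast\alpha)_g,$$
the last equality being the local identity $\Delta_{d_f}=\Delta_{d_f^\Lambda}+M_f$ of Lemma~\ref{lem:tt}. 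Expanding $\Delta_{d_f^\Lambda}=d_f^{\Lambda\ast}d_f^\Lambda+d_f^\Lambda d_f^{\Lambda\ast}$ and integrating by parts gives $(d_f^\ast\alpha,\Delta_{d_f^\Lambda}d_f^\ast\alpha)=\|d_f^\Lambda d_f^\ast\alpha\|^2+\|d_f^{\Lambda\ast}d_f^\ast\alpha\|^2$; the second summand vanishes by the first step, and for the first, Lemma~\ref{lem:swi} gives $d_f^\Lambda d_f^\ast\alpha=C_f^\dagger\alpha-d_f^\ast d_f^\Lambda\alpha=C_f^\dagger\alpha$ (using $d_f^\Lambda\alpha=0$), so $\|d_f^\Lambda d_f^\ast\alpha\|^2=\|C_f^\dagger\alpha\|^2$. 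Assembling the pieces yields $\|\Delta_{d_f}\alpha\|^2-\|C_f^\dagger\alpha\|^2-(d_f^\ast\alpha,M_fd_f^\ast\alpha)_g=0$, which is the assertion.

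Apart from the bookkeeping with the identities $\Delta_{d_f}=\Delta_{d_f^\Lambda}+M_f$ and $d_f^\ast d_f^\Lambda+d_f^\Lambda d_f^\ast=C_f^\dagger$ of Lemmas~\ref{lem:tt} and~\ref{lem:swi} and the standard relations $[\Delta_{d_f},d_f^\ast]=0$, $(d_f^\Lambda)^\ast=d_f^{\Lambda\ast}$, the only substantive ingredient is the decay of $L^2$ solutions of the Witten-type operator $\mathcal{D}_{PT}$, needed to justify the integrations by parts on the non-compact $\mathbb{R}^{2n}$ and to pass from the fourth-order equation $\mathcal{D}_{PT}\alpha=0$ to the first-order conditions $d_f\alpha=d_f^\Lambda\alpha=d_f^{\Lambda\ast}d_f^\ast\alpha=0$. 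This is an Agmon-type estimate, and I expect it to be the only point at which the argument is more than formal; everything downstream is a chain of integrations by parts once those conditions are in hand.
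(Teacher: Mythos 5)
Your proof is correct, but it is organized differently from the paper's. The paper first rewrites the local operator as $\mathcal{D}_{PT}=\Delta_{d_f}^2+d_f^{\Lambda\ast}d_fC_f^\dagger+C_fd_f^\ast d_f^\Lambda-C_fC_f^\dagger-d_fM_fd_f^\ast$ (Lemma~\ref{lem:op}, itself derived from Lemmas~\ref{lem:tt}, \ref{lem:swi} and~\ref{lem:HUB}), then pairs with $\alpha$ and uses $d_f^\Lambda\alpha=0$ to kill the cross terms, so the stated identity is literally $(\alpha,\mathcal{D}_{PT}\alpha)=0$. You bypass Lemma~\ref{lem:op} entirely: you extract the first-order conditions $d_f\alpha=d_f^\Lambda\alpha=d_f^{\Lambda\ast}d_f^\ast\alpha=0$ directly from $(\alpha,\mathcal{D}_{PT}\alpha)=\|d_f^\ast d_f\alpha\|^2+\|d_f^\ast d_f^\Lambda\alpha\|^2+\|d_f^{\Lambda\ast}d_f^\ast\alpha\|^2=0$ and then compute $\|\Delta_{d_f}\alpha\|^2$ by commuting $d_f^\ast$ through $\Delta_{d_f}$ and substituting $\Delta_{d_f}=\Delta_{d_f^\Lambda}+M_f$ and $d_f^\Lambda d_f^\ast\alpha=C_f^\dagger\alpha$. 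The underlying algebra is the same two local identities, but your route has two advantages: it makes explicit where $d_f^\Lambda\alpha=0$ comes from in the non-compact local setting (the paper simply asserts it, implicitly importing the closed-manifold characterisation of $\ker\mathcal{D}_{Pf}$), and it isolates the one genuinely analytic point --- the Gaussian decay of $L^2$ solutions needed to justify the integrations by parts on $\mathbb{R}^{2n}$ --- which the paper also relies on silently via its harmonic-oscillator eigenbasis expansion. The paper's route is shorter once Lemma~\ref{lem:op} is in hand, and that lemma is reused elsewhere in the proof of Proposition~\ref{prop:BIGP}; yours is more self-contained for this particular statement.
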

\begin{proof}
First we write
$$ (\alpha, \mathcal{D}_{PT} \alpha) = \| \Delta_{d_f} \alpha \|^2 - \|C^\dagger \alpha \|^2 - (d_f^\ast \alpha, M_f  d_f^\ast \alpha) + (d^\Lambda_f \alpha , d_f C_f^\dagger \alpha).$$
Any local solution must satisfy $(\alpha \mathcal{D}_{Pf} \alpha) =0$ and $d^\Lambda_f \alpha=0$, so this becomes
$$ (\alpha, \mathcal{D}_{PT} \alpha) = \| \Delta_{d_f} \alpha \|^2 - \|C^\dagger \alpha \|^2 - (d_f^\ast \alpha, M_f  d_f^\ast \alpha) =0.$$
\end{proof}
\begin{proof}[Proof of Proposition~\ref{prop:BIGP}]
Let $\alpha$ be a primitive $k$-form (which forces $k \leq n$). A general element of $L^2(\Omega^k(\mathbb{R}^{2n}))$, denoted $\alpha$, may be written as
\begin{equation} \label{eq:whooo}
\alpha =\sum_{I,J}c_{IJ} \Psi_I dX_J,
\end{equation}
where $\Psi_I$, $I \in \mathbb{N}_0^{2n}$ is a normalised eigenfunction of the $2n$-dimensional quantum harmonic oscillator, $c_{IJ} \in \mathbb{R}$, and each $J$ is a $k$-tuple, so that
$$ dX_J = \bigwedge_{j \in J} dx_j.$$ 
As we will show, it suffices to consider $\alpha$ to be of the form
\begin{equation} \label{eq:whooob}
\alpha = \psi_I \sum_{J} c_J dX_J,
\end{equation}
where $c_J \in \mathbb{R}$ and we choose an overall normalisation,
$$ \sum_{J} c_J^2 = 1.$$
From Lemma~\ref{lem:sat} a necessary condition for $\alpha$ to be in the kernel of $\mathcal{D}_{Pf}$ is to solve
\begin{equation} \label{eq:zero}\| \Delta_{d_f} \alpha \|^2 - \| C^\dagger \alpha \|^2 - (d_f^\ast \alpha, M_f  d_f^\ast \alpha)  =0.\end{equation}
We will separately treat the cases $n_p<n$, $n_p=n$, $n_p>n$.
\subsection*{The case $\bm{ n_p=n}$} This is immediate as if $n_p=n$, $M_f=C_f=C^\dagger_f=0$, so we must have $\Delta_{d_f} \alpha=0$. 
\subsection*{The case $\bm{n_p <n}$}
Here we have
$$\lambda_{2i-1}+\lambda_{2i}= \begin{cases} 0 & \quad 0\leq i \leq n_p, \\
2 & \quad n_p < i \leq n. \end{cases}$$
Now given a $k$-form $dX_J$ we define two numbers. $R^+(J)$ counts the number of $dx_j$ in $j$ with $j > 2n_p$, $R^0(J)$ counts the number of $dx_j$ equal to $dx_{2i}$ where $i\leq n_p$. Moreover, let $\eta_I = \sum_{i \in I} I_i$. 
We now consider the three terms in \eqref{eq:zero} applied to $\alpha$ separately. Considering the explicit form of the $M_f$ operator allows one to write
$$(\alpha, d_f M_f \delta_f \alpha) = 4t(n_p-n) \| \delta_f \alpha \|^2 + 4T \sum_{i=2n_p+1}^{2n} \| e_i^\dagger \delta_f \alpha \|^2$$
Now assuming the form of $\alpha$ given we use the triangle inequality to write
$$(\alpha, d_f M_f \delta_f \alpha) \leq 4t(n_p-n) \| \delta_f \alpha \|^2 + 4T \sum_J c_J^2 \sum_{i=2n_p+1}^{2n} \| e_i^\dagger \delta_f \Psi_I dX_J  \|^2.$$
(Observe that if we had multiple $I$ indices, these would also separate out in this way, and we are justified in having a single $I$ term in \eqref{eq:whooob}.) Now $ \| e_i^\dagger \delta_f \Psi_I dX_J  \|^2=0$ if $e_i^\dagger dX_J=0$ and otherwise is bounded from above by $\| \delta_f \Psi_I dX_J  \|^2$. Hence we may write
$$(\alpha, d_f M_f \delta_f \alpha)g \leq 4T(n_p-n) \| \delta_f \alpha \|^2 + 4T \sum_J c_J^2 R^+(J) \| \delta_f \Psi_I dX_J  \|^2$$
Now, since any solution must have $d_f \alpha=0$, we must also have, assuming $\alpha$ is a solution
\begin{align*}
&(\alpha, d_f M_f \delta_f \alpha) \leq \\ & 4T(n_p-n) (\| \delta_f \alpha \|^2+\|d_f \alpha \|^2) + 4T \sum_J c_J^2 R^+(J)( \| \delta_f \Psi_I dX_J  \|^2+\| d_f \Psi_I dX_J  \|^2)
\end{align*}
which then becomes
$$(\alpha, d_f M_f \delta_f \alpha) \leq 4T \sum_{J}c_J^2(n_p-n + R^+(J)) W_{IJ},$$
where
$$W_{IJ} = 2T \left ( \eta_I+  (n_p-k+ 2(R^0(J) + R^+(J))) \right ),$$
is the eigenvalue of the Witten Laplacian operating on $\Psi_I dX_J$. The first term in \eqref{eq:zero} is readily shown to be
$$  \sum_{J}c^2_J W_{IJ}^2. $$
For the second term we give an upper bound on the value of $\| C^\dagger \alpha \|^2 $ from Lemma~\ref{lem:bigo} with $a=0$ which is 
$$ \| C^\dagger_f \alpha \|^2 \leq 8 T^2 \sum_J c_J^2 \left ( 2 \lfloor{ R^+(J) /2 \rfloor}  \right ).$$
This then yields (assuming $d_f \alpha = 0$)
\begin{align*} & \| \Delta_{d_f} \alpha \|^2 - \| C^\dagger \alpha \|^2 - (d_f^\ast \alpha, M_f  d_f^\ast \alpha)_g \geq  \\ & T^2 \sum_J c_J^2 \left ( W_{IJ}(W_{IJ} - n_p-n+R^+(J) )- 16 \lfloor R^+(J)/2 \rfloor \right ) 
\end{align*}
Our goal is to show that this expression can only vanish if $R^+(J) = R^0(J)=0$, since this implies that the any local solution is in the kernel of the Witten Laplacian. Now we may assume, without loss of generality, that $\eta_I=0$, since $\eta_I$ only increases the expression. Finally, noting that $W_{IJ} \geq 2R^+(J)$, we arrive at the form
\begin{align*} &\| \Delta_{d_f} \alpha \|^2 - \| C^\dagger \alpha \|^2 - (d_f^\ast \alpha, M_f  d_f^\ast \alpha)_g \geq  \\  & T^2 \sum_J c_J^2 \left ( 2 R^+(J)  (4 n - 2 (n_p+k)+ 4R^0(J)) - 16 \lfloor R^+(J)/2 \rfloor) \right)
\end{align*}
Now observe that $ (4 n - 2 (n_p+k)+ 4(R^0(J))) > 2n$, hence if $n \geq 2$, the above expression is greater than zero unless $R^+(J)=0$ for all $J$. If $n=1$, then $R^+(J) \leq 1$, which implies that the above expression is also positive unless $R^+(J)=0$. Now if $R^+(J)=0$, we first observe that $C^\dagger \alpha = 0$. Moreover, in this case $M_f$ simply multiplies by $-4T(n-n_p)$, hence the only solutions to \eqref{eq:zero} have $\Delta_{d_f} \alpha = 0$, which also requires $d^\ast_f \alpha =0$.

\subsection*{The case $\bm{n_p > n}$}
This time we have
$$\lambda_{2i-1}+\lambda_{2i}= \begin{cases} 0 & \quad 0\leq i \leq 2n-n_p, \\
-2 & \quad 2n-n_p < i \leq n. \end{cases}$$
Again, given a $k$-form $dX_J$ we (re)define several associated numbers. $R^-(J)$ counts the number of $dx_j$ with $j>4n-2n_p$, $R^0(J)$ counts the number of $dx_{j}$ equal to $dx_{2i-1}$ with $i \leq 2n-n_p$, and $R^+(J)$ counts the numbers of $dx_{j}$ equal to  to $dx_{2i}$ with $i \leq 2n-n_p$. We once again consider the three terms in \eqref{eq:zero} separately. This time the third term is written as
$$(\alpha, d_f M_f \delta_f \alpha) = 4T(n_p-n) \| \delta_f \alpha \|^2 - 4T \sum_{i=4n-2n_p+1}^{2n} \| e_i^\dagger \delta_f \alpha \|^2,$$
which we rewrite as
\begin{equation} \label{eq:haha}(\alpha, d_f M_f \delta_f \alpha) = 4T(n_p-n) \| \delta_f \alpha \|^2 - 4T \sum_{i=4n-2n_p+1}^{2n} \left (\left ( \|  \delta_f e_i^\dagger \alpha \|^2 + \|  d_f e_i^\dagger \alpha \|^2 \right) - \| d_f e_i^\dagger \alpha  \|^2\right)
\end{equation}
The first term in \eqref{eq:haha} can be bounded from below as
$$ \| \delta_f \alpha \|^2 \leq \sum_J c_J^2 \| \delta_f \psi_I dX_J \|^2 \leq 2T \sum_{J} c_J^2 (\eta_I +  R^+(J))$$
Now consider the second term in \eqref{eq:haha}
$$ \|  \delta_f e_i^\dagger \alpha \|^2 + \|  d_f e_i^\dagger \alpha \|^2  = ( \alpha, e_i \Delta_{d_f} e_i^\dagger \alpha) = \sum_J c_J^2 {\tilde W}_{i,IJ}$$
where
$$ {\tilde W}_{i,IJ} = \begin{cases} W_{IJ}+2T & \quad i \in J \\ 
0 & \quad i \notin J \end{cases}. $$
This allows \eqref{eq:haha} to be written as
\begin{align*}
& (\alpha, d_f M_f \delta_f \alpha)g \leq \\ &  4T \sum_J \left( c_J^2 (2T (n_p-n)(\eta_I+R^+(J)) - R^-(J)(W_{IJ}+2T)  \right) + 4T \sum_{i=4n-2n_p+1}^{2n}  \| d_f e_i^\dagger \alpha \|^2 
\end{align*}
The final piece here can be written, assuming $d_f \alpha=0$, as
$$ 4T \sum_{i=4n-2n_p+1}^{2n}  \| d_f e_i^\dagger \alpha \|^2 =  4T \sum_{i=4n-2n_p+1}^{2n}  \| D_i \alpha \|^2    \leq \sum_J c_J^2 \sum_{i=4n-2n_p+1}^{2n} \| D_i \Psi_I dX_J \|.$$
Now observe that $D_i$ acts as an annihilation operator if $x_i$ is a positive direction, and a creation operator if $x_i$ is a negative direction. Hence we have
$$ \sum_{i=4n-2n_p+1}^{2n} \| D_i \Psi_I dX_J \| = 2T(\eta_I + 2(n_p-n)).$$
Putting it together we find the third term is bound from below as
\begin{align*} &(\alpha, d_f M_f \delta_f \alpha) \leq \\ & 4T \sum_J \left( c_J^2 (2T (n_p-n)(\eta_I+R^+(J)) - 4T {\rm Max}\left [ 0,R^-(J)(W_{IJ}+2T)  -2T(\eta_I + 2(n_p-n))\right ] \right) \end{align*}
The first term in \eqref{eq:zero} is now written as
$$ \sum_{J} c_J W_{IJ}^2$$ with
$$ W_{IJ} = 2T(\eta_I + n_p-k+2 R^+(J) )$$
For the third term in \eqref{eq:zero} we may simply replace $R^+$ with $R^-$ and (again taking $a=0$) we find
$$ \| C^\dagger_f \alpha \|^2 \leq 8 T^2 \sum_J c_J^2 \left ( 2 \lfloor{ R^-(J) /2 \rfloor}  \right ).$$
Putting it all together we obtain the inequality
\begin{align*}
(\alpha, D_{Pf} \alpha) \geq & \\ &  4T^2 \sum_J  c_J^2 \Big( \frac{1}{2}(n-n_p)(k+\rho_J)+(n_p+\rho_J)^2 - 4 \lfloor{ R^-(J) /2 \rfloor}+ \\ & 2 {\rm Max}\left [ 0, 2n+n_p(R^-(J)-2)+R^-(J)(1+\rho_J) \right]\Big),
\end{align*}
where $\rho_J = R^+(J)-R^0(J)-R^-(J)$. (We may once again assume $\eta_I=0$, since it only increases the expression). The resulting expression is minimised if $\rho_J=-k$, and is then easily shown to be strictly positive (with a minimum, for each $J$, of 1 if $n_p-n=1$, $R^-(J)=1$). Since there is a positive lower bound, there can be no solutions.
\end{proof}
\begin{lem} \label{lem:bigo}
Let $\alpha = \Psi_I \sum_J c_J dX_J$ be a primitive $k$-form, as in \eqref{eq:whooob}. Let $n_p < n$. Then for all $a \in \mathbb{R}$,
$$ \| C^\dagger_f \alpha \|^2 \leq 8 T^2 \sum_J c_J^2 \left ( 2 \lfloor{ R^+(J) /2 \rfloor}  (1-a)^2+(k-R^+(J))a^2 \right ).$$
\end{lem}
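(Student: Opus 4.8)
\emph{Sketch.} The inequality is fibrewise and essentially algebraic: $\Psi_I$ is a scalar of unit $L^2$-norm and $C^\dagger_f$ acts only on the exterior-algebra factor, so it suffices to bound $\|\Lambda'\beta\|^2$ for the constant primitive covector $\beta=\sum_J c_J\,dX_J\in\bigwedge^k(\mathbb R^{2n})^\ast$, where by Lemma~\ref{lem:swi} and the fact that $\lambda_{2i-1}+\lambda_{2i}$ equals $0$ for $i\le n_p$ and $2$ for $i>n_p$ (Lemma~\ref{lem:what}), the operator $\Lambda':=(4T)^{-1}C^\dagger_f=\sum_{i=n_p+1}^{n}\iota_{\partial_{2i-1}}\iota_{\partial_{2i}}$ is the Lefschetz codifferential of the symplectic subspace spanned by the ``positive'' Darboux pairs. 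Decomposing $\bigwedge^\bullet(\mathbb R^{2n})^\ast\cong\bigotimes_{i=1}^{n}\bigwedge^\bullet(\mathbb R^2)^\ast$ over the $n$ symplectic $2$-planes, write $\Lambda'=\sum_{i>n_p}b_i$ and $\Lambda_0:=\Lambda-\Lambda'=\sum_{i\le n_p}b_i$, with $b_i$ commuting ``local'' contractions supported on distinct slots. The two elementary inputs are: $b_i^\ast b_i$ is the orthogonal projection onto monomials containing both $dx_{2i-1}$ and $dx_{2i}$, so $\sum_{i>n_p}\|b_i\beta\|^2=\sum_J c_J^2\,\#\{i>n_p:\{2i-1,2i\}\subseteq J\}\le\sum_J c_J^2\lfloor R^+(J)/2\rfloor$, and similarly $\sum_{i\le n_p}\|b_i\beta\|^2\le\tfrac12\sum_J c_J^2(k-R^+(J))$.

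Primitivity is used through $\Lambda\beta=0$, i.e. $\Lambda'\beta=-\Lambda_0\beta$ --- precisely the phenomenon of the Remark, where a primitive form is realised either via the ``positive'' pairs or the ``mixed'' ones, with no geometric reason for $\Lambda'\beta$ to vanish. In particular $\|C^\dagger_f\alpha\|^2=16T^2\|\Lambda'\beta\|^2=16T^2\|\Lambda_0\beta\|^2$, so the case $a=0$ follows at once from the first displayed bound (and $a=0$ is the value used in Section~\ref{sec:local}). For general $a$ one writes $\Lambda'\beta=(1-a)\Lambda'\beta-a\Lambda_0\beta$ and estimates $\|(1-a)\Lambda'\beta-a\Lambda_0\beta\|^2$, keeping the ``positive'' group (weighted by $(1-a)^2$) and the ``mixed'' group (weighted by $a^2$) apart; reassembling the two diagonal contributions above and tracking the constant $(4T)^2$ yields the monomial weight $2\lfloor R^+(J)/2\rfloor(1-a)^2+(k-R^+(J))a^2$ and the prefactor $8T^2$.

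The step I expect to be the obstacle is controlling the cross/off-diagonal terms: the products $b_i^\ast b_j$ with $i\ne j$ transport a complete $2$-plane pair from one slot to another and couple distinct monomials $dX_J,dX_{J'}$, so $\|\Lambda'\beta\|^2$ is genuinely not $\sum_i\|b_i\beta\|^2$, and for intermediate $a$ these terms are not dominated by primitivity alone. To close them I would use the full system of primitivity relations --- for every $(k-2)$-index $K$ the $dX_K$-coefficient of $\Lambda\beta$ vanishes --- to re-express each cross term through the diagonal quantities $\|b_i\beta\|^2$, followed by an AM--GM step in which $a$ is chosen to balance the ``positive'' against the ``mixed'' bookkeeping pair by pair; one likely also has to invoke the remaining defining equations of a local solution ($d_f\alpha=0$, equivalently the $c_J$ being supported at a single harmonic-oscillator level) to rule out the degenerate mixings that the Remark's example shows primitivity permits.
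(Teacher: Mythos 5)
Your setup coincides with the paper's up to the point you yourself flag as the obstacle, and that obstacle is a genuine gap: the off-diagonal terms $\langle b_i\beta, b_j\beta\rangle$, $i\neq j$, are never controlled, so even your case $a=0$ does not ``follow at once'' from the diagonal bound $\sum_{i>n_p}\|b_i\beta\|^2\leq\sum_J c_J^2\lfloor R^+(J)/2\rfloor$ --- that bound estimates the wrong quantity, since $\|\Lambda'\beta\|^2\neq\sum_{i>n_p}\|b_i\beta\|^2$ in general (your second and third paragraphs contradict each other on exactly this point). The devices you propose for closing the gap do not suffice as stated: the primitivity relations (vanishing of every $dX_K$-coefficient of $\Lambda\beta$) only give $\Lambda'\beta=-\Lambda_0\beta$ and do not by themselves re-express the cross terms through diagonal ones, and appealing to $d_f\alpha=0$ or to the harmonic-oscillator level of $\Psi_I$ is off-target, because the lemma is a purely pointwise algebraic statement about primitive forms that assumes nothing about $\alpha$ being a local solution; the paper's proof uses primitivity alone.

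The paper's mechanism for organizing the cross terms is a structural device absent from your argument: the orthogonal refinement $P\Omega^k(\mathbb{R}^{2n})=\bigoplus_r PC\Omega^{k-2r}\wedge Z^r$, where $PC\Omega^\bullet$ is spanned by the $dX_J$ containing no complete Darboux pair (on which $C_f^\dagger$ acts trivially, so it can be split off) and $Z^r$ consists of primitive combinations of the $z_i=dx_{2i-1}\wedge dx_{2i}$, identified with chains on an $n$-simplex on which both $\Lambda$ and $(4T)^{-1}C_f^\dagger$ act as unsigned boundary operators (the latter counting only vertices $i>n_p$). In that model $\|C_f^\dagger\zeta\|^2$ is a sum over the $(\tilde k-1)$-cells of the target, the cross terms being exactly the interferences within each cell; these are handled by replacing $C_f^\dagger$ with $C_f^\dagger-4Ta\Lambda$ (same image on primitive forms, which is where your parameter $a$ enters) and applying the triangle inequality cell by cell, each $z_j$ contributing $16T^2(1-a)^2$ or $16T^2a^2$ according to whether $j>n_p$ or not; comparing the $Z$-rank data $\tilde R^+(J)$, $\tilde k$ with $R^+(J)$, $k$ then produces the stated weights. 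Without this (or an equivalent) decomposition your AM--GM balancing has nothing concrete to act on, so the proposal as written does not prove the lemma.
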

\begin{proof}
Consider the space of primitive $k$-forms, $P\Omega^k(\mathbb{R}^{2n})$ (recall we have the standard symplectic structure and Euclidean metric). Then there is a Lefschetz-type decomposition
$$P\Omega^k(\mathbb{R}^2n) = \bigoplus_r PC\Omega^{k-2r}(\mathbb{R}^{2n}) \wedge Z^r(\mathbb{R}^{2n}),$$
which is orthogonal with respect to the inner product. The space $PC\Omega^{k}(\mathbb{R}^{2n})$ corresponds to `coisotropic' $k$-forms, and has dimension $2^n {{n} \choose{k}}$. A basis is given by the $dX_J$ such that, for all $i,j \in J$,
$$ \left \lfloor \frac{i}{2} \right  \rfloor \neq  \left \lfloor \frac{j}{2} \right \rfloor.$$
Let $z_i = dx_{2i-1} \wedge dx_{2i}$, the $Z^k(\mathbb{R}^{2n})$ are primitive $2k$-forms generate by the $z_i$. To describe a basis for these, let $S$ be an $n$-simplex, we associate $z_i$ to the $i^{th}$ vertex of $S$, $z_i \wedge z_j$ to the edge between nodes $i$ and $j$, and so on. Then an element of of $Z^k(\mathbb{R}^{2n})$ can be identified with a $k$-chain in $S$. The dual Lefschetz operator $\Lambda$ then appears as an unsigned boundary operator on the simplex $S$, for example
$$\Lambda (z_i \wedge z_j ) = z_i + z_j,$$
with similar formulae for higher order forms. A basis of $Z^k(\mathbb{R}^{2n})$ can be identified with the set of $k$-chains which are closed under the unsigned boundary operator $\Lambda$, (this is just the standard boundary operator in simplicial homology, without the factor of $(-1)^i$). The dimension of this space is therefore
$$ {\rm dim} \, Z^k(\mathbb{R}^{2n}) =  {{n}\choose{k}} - {{n}\choose{k-1}}.$$
Given a primitive form written as $c \wedge \zeta$, with $c$ coisotropic, and $\zeta$ an element of some $Z^k$, observe that the $C_f^\dagger$ operator respects this decomposition,
$$ C_f^\dagger (c \wedge \zeta)= c \wedge C_f^\dagger \zeta$$
Hence we need only consider the operation of $C^\dagger_f$ on $Z^k(M)$. We describe it in terms of the simplicial picture. The map $C^\dagger_f$ is ($4T$ times) an unsigned boundary operator which only counts those $z_i$ with $i>n_p$.

We now consider $\|C_f^\dagger \alpha \|^2$. Since any cosisotropic pieces of $\alpha$ are unaffected by $C_f^\dagger$, the norm squared decomposes as a sum over all terms in $\alpha$ with the same coisotropic piece, so we may assume
$$\alpha = c \wedge \zeta.$$
We say $\zeta$ has rank $\tilde k \leq k$. We then need to consider the norm squared of $C_f^\dagger \zeta$. This will be a sum over each $(\tilde k-1)$-cell of $S$ of the image of $C_f^\dagger$ sitting there. Since $\alpha$ is primitive, we can instead use the operator $C_f^\dagger - 4 T a \Lambda$, which has the same image. We then use the triangle inequality on the norm at each $(\tilde k-1)$ cell. Observe that the total contribution from a given $z_j$ will be $16T^2(1-a)^2$ if $j>n_p$ and $16 T^2 a^2$ if $j \leq n_p$. This then gives
$$ \| C_f^\dagger \alpha \|^2 \leq 16 T^2 \sum_J c_J^2( \tilde R^+(J) (1-a^2) + (\tilde k - \tilde R^+(J))a^2),$$
where $\tilde R^+(J)$ counts twice the number of $z_i$, $i>n_p$, appearing in $dX_J$. Now observe that $\tilde R^+(J) \leq  \lfloor{ R^+(J) /2 \rfloor} $ and $(\tilde k - \tilde R^+(J)) \leq ( k -  R^+(J))$. The result then follows.
\end{proof}
\newpage
\section{Global Solutions} \label{sec:global}
In this section we fix some $k$, and let ${\bm H}^l(M)$ be the Sobolev space $W^{2,l}(P\Omega^k(M))$ of primitive $k$-forms, with $\| \cdot \|$ the $l=0$ norm, and $(\cdot, \cdot)$ the inner product. Moreover, in this section consider the Bilaplacian $\mathcal{D}_T$ as a differential operator
$$\mathcal{D}_T : {\bm H}^4(M) \to {\bm H}^0(M).$$
The purpose of this section is to use the local solutions from Section~\ref{sec:local}, along with some analytic estimates, to prove the following.
\begin{prop} \label{prop:vals}
For every $c>0$ there exists a $T_c$ such that for $T>T_c$ the number of eigenvalues of $\mathcal{D}_{T}$ in the range $[0,c]$ is equal to $m_k$.
\end{prop}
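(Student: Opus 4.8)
The plan is to run the Witten-deformation argument for the fourth-order operator $\mathcal{D}_T$, following the template of Zhang~\cite{zhang2001lectures} for the classical Morse inequalities~\cite{witten1982supersymmetry}: as $T\to\infty$ the spectrum of $\mathcal{D}_T$ splits into a cluster near $0$, whose cardinality is governed by the local solutions of Section~\ref{sec:local}, and a part that escapes to infinity. Write $E_T\subset{\bm H}^0(M)$ for the span of the eigenforms of $\mathcal{D}_T$ with eigenvalue in $[0,c]$; the claim is $\dim E_T=m_k$ for $T$ large, and I would prove $\dim E_T\geq m_k$ and $\dim E_T\leq m_k$ separately. Two structural facts are used throughout: $\mathcal{D}_T$ is non-negative, with $(\mathcal{D}_T\alpha,\alpha)=\|d_f^\ast d_f\alpha\|^2+\|d_f^\ast d_f^\Lambda\alpha\|^2+\|d_f^{\Lambda\ast}d_f^\ast\alpha\|^2$ on primitive forms; and on each Darboux chart $U_p$ the metric is Euclidean and $f$ is exactly quadratic, so $\mathcal{D}_T$ restricted to $U_p$ is literally the model operator studied in Proposition~\ref{prop:BIGP}.

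\emph{Lower bound.} For a critical point $p$ of index $n_p=k$ (so $k\leq n$), Proposition~\ref{prop:BIGP} supplies the primitive $k$-form $\alpha_p=\exp(-T|x|^2/2)\,dx_1\wedge dx_3\wedge\dots\wedge dx_{2k-1}$ with $\mathcal{D}_T\alpha_p=0$ on $U_p$. Let $\rho_p$ be a cut-off supported in $U_p$ and equal to $1$ near $p$, and set $\beta_p=\rho_p\alpha_p\in P\Omega^k(M)$. Since $[\mathcal{D}_T,\rho_p]$ is supported where $d\rho_p\neq 0$, away from $p$, where $\alpha_p$ is exponentially small, one has $\|\mathcal{D}_T\beta_p\|\leq Ce^{-cT}$ while $\|\beta_p\|$ stays bounded below; the $\beta_p$ with $n_p=k$ have disjoint supports, so after normalisation they form an orthonormal family. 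The standard consequence of the variational principle — an almost-orthonormal family of $N$ forms on which a non-negative self-adjoint operator is small forces $N$ eigenvalues to lie near $0$ — gives $\dim E_T\geq m_k$ for $T$ large.

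\emph{Upper bound.} This is the substantive half, and it rests on three ingredients. (i) An estimate away from the critical points: where $|df|$ is bounded below, Lemma~\ref{lem:op} together with the fact that the corrections to $\Delta_{d_f}^2$ are of lower order in $T$ there yields $(\mathcal{D}_T\gamma,\gamma)\geq c_1T^4\|\gamma\|^2$ for $\gamma$ supported in such a region, once $T$ is large. (ii) A model spectral gap: on $\mathbb{R}^{2n}$, Proposition~\ref{prop:BIGP} identifies $\ker\mathcal{D}_T$ on primitive $k$-forms as one-dimensional when $n_p=k\leq n$ and zero otherwise, and since $\mathcal{D}_T=\Delta_{d_f}^2+O(T)$ with $\Delta_{d_f}$ carrying its Witten gap of order $T$ above the same (primitive) kernel, the first non-zero model eigenvalue is $\geq\mu T^2$ with $\mu>0$ independent of $T$ — the quantitative inequalities in the proof of Proposition~\ref{prop:BIGP} make this precise. (iii) A localisation identity: choosing a partition $\rho_0^2+\sum_p\rho_p^2\equiv 1$ with $\mathrm{supp}\,\rho_p\subset U_p$ and $\mathrm{supp}\,\rho_0$ away from the critical points, and writing $\mathcal{D}_T=\sum_i P_i^\ast P_i$ with $P_1=d_f^\ast d_f$, $P_2=d_f^\ast d_f^\Lambda$, $P_3=d_f^{\Lambda\ast}d_f^\ast$ (all second order, with each $[P_i,\rho_j]$ first order and with $T$-independent coefficients), one gets $(\mathcal{D}_T\alpha,\alpha)\geq\tfrac12\sum_j(\mathcal{D}_T(\rho_j\alpha),\rho_j\alpha)-C\|\alpha\|_{{\bm H}^1}^2$. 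Feeding (i) and (ii) into this: for $\alpha\in E_T$ the left-hand side is $\leq c\|\alpha\|^2$, so $\|\rho_0\alpha\|^2=O(T^{-4})\|\alpha\|^2$ and $\sum_p\|\rho_p\alpha-\Pi_p(\rho_p\alpha)\|^2=O(T^{-2})\|\alpha\|^2$, where $\Pi_p$ projects onto the model kernel — provided $\|\alpha\|_{{\bm H}^1}^2$ is controlled by $o(T^2)\|\alpha\|^2$ on $E_T$. Writing $\alpha=\rho_0(\rho_0\alpha)+\sum_p\rho_p(\rho_p\alpha)$, the map $\alpha\mapsto(\Pi_p(\rho_p\alpha))_p$ into the direct sum of the model kernels is then injective for $T$ large, and its target has dimension $m_k$ (only a critical point of index $k$, with $k\leq n$, carries a non-trivial model kernel in degree $k$), so $\dim E_T\leq m_k$.

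The hard part is exactly the a priori control of $\|\alpha\|_{{\bm H}^1}$ needed in step (iii). Unlike the second-order IMS formula for a Schrödinger operator, whose commutator remainder is of order zero, the fourth-order operator $\mathcal{D}_T$ produces a remainder that must be absorbed into the $T$-growth of the main terms, and the naive elliptic estimate for $\mathcal{D}_T$ — whose lower-order coefficients grow like $T^4$ — is far too weak. The resolution, as in the Witten and Bismut--Lebeau framework followed by Zhang~\cite{zhang2001lectures}, is to exploit the concentration of the low-lying eigenforms at scale $T^{-1/2}$ about the critical points, established by Agmon-type exponential-decay estimates for $\mathcal{D}_T$; this upgrades the bound to $\|\alpha\|_{{\bm H}^1}=O(\sqrt T)\|\alpha\|$, which is $o(T^2)\|\alpha\|$ as required. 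With this in hand the two bounds combine to give $\dim E_T=m_k$, proving the proposition.
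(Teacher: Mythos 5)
Your overall strategy --- quasimodes built from the local solutions of Proposition~\ref{prop:BIGP} for the lower bound, and a localisation argument reducing the upper bound to the model operators at the critical points --- is the same Witten/Zhang template the paper follows, and your lower bound is sound. But the upper bound as you have organised it contains a genuine gap, and it sits exactly where you flag ``the hard part.'' Your IMS identity needs the remainder to be $o(T^2)\|\alpha\|^2$ so that it can be absorbed by the model gap of order $T^2$. Two things go wrong. First, the commutators $[P_i,\rho_j]$ do \emph{not} have $T$-independent coefficients: writing $f=Tf_0$, each $P_i$ contains first-order terms with coefficients of order $T$ (from $T\,df_0\wedge$ and $T\,\iota_{\nabla f_0}$), so $[P_i,\rho_j]$ is a first-order operator with $T$-independent principal part \emph{plus} a zeroth-order term of size $O(T)$, and the remainder is really bounded by $C\bigl(\|\alpha\|_{{\bm H}^1(\mathrm{supp}\,d\rho_j)}^2+T^2\|\alpha\|_{L^2(\mathrm{supp}\,d\rho_j)}^2\bigr)$ --- the second piece is already of the same order as the gap you are trying to beat. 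Second, the estimate $\|\alpha\|_{{\bm H}^1}=O(\sqrt{T})\|\alpha\|$ on $E_T$ that you invoke to control the first piece is asserted, not proved; the crude route (ellipticity of $\mathcal{D}_T$ plus interpolation) only yields $\|\alpha\|_{{\bm H}^1}=O(T)\|\alpha\|$, which gives $O(T^2)\|\alpha\|^2$ again, and Agmon-type decay for eigenforms of a fourth-order, non-Schr\"odinger operator is not an off-the-shelf fact --- establishing it would be comparable in difficulty to the estimate you are trying to prove. So the upper bound does not close as written.

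It is worth seeing how the paper's organisation sidesteps exactly this. Rather than a partition-of-unity identity for the quadratic form, the paper takes $E_T$ to be the span of the \emph{cut-off local ground states} $\sigma_{p,T}$ and splits $\mathcal{D}_T$ into the four blocks $\pi_T\mathcal{D}_T\pi_T$, etc.\ (the Bismut--Lebeau scheme). The only lower bound it needs on the complement is Proposition~\ref{prop:est}(ii), $(s,\mathcal{D}_Ts)\geq CT\|s\|^2$ for $s\in E_T^\perp$ --- order $T$, not $T^2$ --- because the comparison between $E_T$ and the true spectral subspace $E_T(c)$ is then made through the resolvent contour integral of Lemma~\ref{lem:58}, where any gap that tends to infinity suffices. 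This deliberately weaker target means the cross terms never have to be beaten at order $T^2$, and no ${\bm H}^1$ or exponential-decay estimate on the actual eigenforms is required. If you want to keep your IMS formulation, you would need to either (a) actually prove the Agmon decay for $\mathcal{D}_T$ and exploit the support of $d\rho_j$ away from the critical points to kill the $T^2\|\alpha\|_{L^2(\mathrm{supp}\,d\rho_j)}^2$ term, or (b) lower your target to a gap of order $T$ on a suitable complement and finish with a resolvent or min-max comparison as the paper does.
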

The proof follows that given by Zhang~\cite{zhang2001lectures} (Chapters 4 and 5). Let $Z_{k}(f)$ be the set of critical points of $f$ on $M$ with Morse index $k$. As in Section~\ref{sec:local} we choose the metric $g$ on $M$ to be Euclidean in a neighbourhood of each critical point $p \in Z(f)$. Let $U_p$ be an open neighbourhood of $p$ such that $g$ is Euclidean on $U_p$. Without loss of generality we take $U_p$ to be an open ball around $p$ with radius $6a$, and we will denote by $U_p(r)$ the open ball of radius $r$ (also Euclidean). Now define a smooth cutoff function $\gamma: \mathbb{R} \to [0,1]$ such that
$$ \gamma(x) = \begin{cases} 1 & \quad |x| \leq a, \\ 0 & \quad |x| \geq 2a. \end{cases}$$
We then extend $\gamma$ to a function on $M$ by setting it to zero outside each $U_p(2a)$. Now for any critical point $p \in Z(f)$, $T>0$, define 
$$ \sigma_{p,T} = \frac{\gamma(|x|)}{\sqrt{ \alpha_{p,T}}} \tilde \sigma_{p,T}$$
where $\tilde \sigma_{p,T}$ is equal to the local solution described in Proposition~\ref{prop:BIGP} for $x \in U_p$ and 0 otherwise. $\sigma_{p,T}$ is then a primitive $k$-form. The constant $\alpha_{P,T}$ is chosen such that
$$\|  \sigma_{p,T} \|=1.$$
Now, let $E_{T,k}$ be the direct sum of vector spaces generated by the $\sigma_{p,T}$s. Moreover, let $E^\perp_{T}$ be the orthogonal complement of to $E_T$ in ${\bm H}^0(M)$. Then there is an orthogonal splitting
$${\bm H}^0(M) = E_{T} \oplus E^\perp_{T}.$$
Additionally, let $\pi_T$ and $\pi_T^\perp$ denote the orthogonal projections from ${\bm H}^0(M)$ onto $E_T$ and $E_T^\perp$ respectively. We then define a splitting of the operator $\mathcal{D}_f$ (see Bismut and Lebeau~\cite{bismut1991complex} Chapter IX) as
$$ \mathcal{D}_f = \mathcal{D}_{T,1}+ \mathcal{D}_{T,2}+\mathcal{D}_{T,3}+\mathcal{D}_{T,4},$$
where
\begin{align*}
\mathcal{D}_{T,1} = \pi_T D_T \pi_T, &\quad \mathcal{D}_{T,2} = \pi_T D_T \pi^\perp_T, \\
\mathcal{D}_{T,3} = \pi^\perp_T D_T \pi_T, &\quad \mathcal{D}_{T,4} = \pi^\perp_T D_T \pi^\perp_T.
\end{align*}
\newpage
We then have the following estimates.
\begin{prop} \label{prop:est} \hspace{2ex}\newline
\begin{enumerate}[label=(\roman*)]
\item There is a constant $T_0>0$ such that for $T>T_0$ and any $s \in {\bm H}^4(M)$ we have
\begin{align*}(s, \ \mathcal{D}_{T,1}  s)_0 &\leq \frac{1}{T^2} \| s\|_0^2,\\
(s, \ \mathcal{D}_{T,2}  s)_0= (s, \ \mathcal{D}_{T,3}  s)_0 &\leq \frac{1}{T^2} \| s\|_0^2.
\end{align*}
\item There exists $T_1>0$ and $C>0$ such that for any $s \in E_T^\perp \cap {\bm H}^4(M)$ and $T \geq T_1$,
$$ (s, \mathcal{D}_{f} s ) \geq C T \|s\|^2.$$
\end{enumerate}
\end{prop}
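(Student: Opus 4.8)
The plan is to follow the standard scheme for Witten-type localization estimates, as in Zhang's lectures and Bismut--Lebeau, adapting the bookkeeping to the fourth-order operator $\mathcal{D}_T$ and the primitive forms.

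\medskip

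\emph{Part (i).} First I would estimate $\mathcal{D}_{T,1}$. On the finite-dimensional space $E_T$ spanned by the $\sigma_{p,T}$, the operator $\mathcal{D}_T$ acts almost like $0$, because each $\tilde\sigma_{p,T}$ is an exact local solution ($\mathcal{D}_{T}\tilde\sigma_{p,T}=0$ on $U_p$) and the only error comes from the cutoff $\gamma$. Applying $\mathcal{D}_T$ to $\sigma_{p,T}=\gamma\,\tilde\sigma_{p,T}/\sqrt{\alpha_{p,T}}$ produces terms supported in the annulus $a\le|x|\le 2a$ where $\gamma$ is non-constant; there $\tilde\sigma_{p,T}$ is a Gaussian of width $T^{-1/2}$, so it and all its derivatives are $O(e^{-cT})$ pointwise on that annulus. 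Hence $\|\mathcal{D}_T\sigma_{p,T}\|_0 = O(e^{-cT})$ and $\alpha_{p,T}=1+O(e^{-cT})$, which gives $(s,\mathcal{D}_{T,1}s)_0 \le O(e^{-cT})\|s\|_0^2 \le T^{-2}\|s\|_0^2$ for $T$ large. For $\mathcal{D}_{T,2}$ and $\mathcal{D}_{T,3}$ (which are adjoints of each other, hence equal as quadratic forms after symmetrizing, giving the stated equality), one writes $(s,\mathcal{D}_{T,2}s)_0 = (\pi_T s, \mathcal{D}_T \pi_T^\perp s)_0$ and integrates by parts to move two of the four derivatives onto $\pi_T s \in E_T$; since $\mathcal{D}_T$ applied to the basis elements of $E_T$ is again $O(e^{-cT})$ in the relevant Sobolev norms, the same exponential bound applies. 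The key input throughout is the explicit Gaussian form of the local solution from Proposition~\ref{prop:BIGP}.

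\medskip

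\emph{Part (ii).} This is the main obstacle. I want a coercive lower bound $(s,\mathcal{D}_f s)\ge CT\|s\|^2$ on $E_T^\perp$. The strategy: away from the critical points, $|df|$ is bounded below, so the Witten Laplacian $\Delta_{d_f}$ (and its square, which dominates $\mathcal{D}_{PT}$ up to lower-order curvature terms absorbed for $T$ large — recall $\mathcal{D}_T\cong 2(\Delta_{d_f})^2$ at the symbol level and the non-principal terms are $O(T)$) satisfies $\Delta_{d_f}\ge c_0 T^2$ there. Near a critical point $p$, the local analysis of Section~\ref{sec:local} shows that on $U_p$ the operator $\mathcal{D}_{PT}$ has discrete spectrum with a spectral gap: the only $L^2$ groundstate is $\tilde\sigma_{p,T}$ (Proposition~\ref{prop:BIGP}), and the next eigenvalue is $\ge c_1 T$ (this follows from the eigenvalue formulas $W_{IJ}$, $\tilde W_{i,IJ}$ and the explicit lower bounds derived in the proof of Proposition~\ref{prop:BIGP}, all of which scale like $T$). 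So I would use a partition of unity $\{\phi_p\}\cup\{\phi_0\}$ subordinate to $\{U_p(3a)\}$ and the complement, write $s = \sum_p \phi_p s + \phi_0 s$, apply the IMS localization formula
\begin{equation*}
(s,\mathcal{D}_f s) \;\ge\; \sum_p (\phi_p s,\mathcal{D}_f \phi_p s) + (\phi_0 s,\mathcal{D}_f \phi_0 s) - C\|s\|^2,
\end{equation*}
where the error $-C\|s\|^2$ collects the commutators $[\mathcal{D}_f,\phi_p]$ (these have order $\le 3$, but the cutoffs are $T$-independent so the error is $O(\|s\|_3\|s\|_1)$; one absorbs it using the full elliptic estimate $\|s\|_4^2 \le C((s,\mathcal{D}_f s)+\|s\|^2)$ and interpolation, at the cost of worsening constants but keeping the $CT$ gain). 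On the bulk piece $\phi_0 s$ one has $(\phi_0 s,\mathcal{D}_f\phi_0 s)\ge c_0 T^2\|\phi_0 s\|^2$. On each $\phi_p s$, one uses that $\phi_p s$, viewed as a form on $\mathbb{R}^{2n}$, is $L^2$-orthogonal to $E_T$ up to an $O(e^{-cT})$ error (since $s\perp E_T$ and $\phi_p\equiv 1$ on the support of $\sigma_{p,T}$), hence $\phi_p s$ is essentially orthogonal to the unique groundstate, so $(\phi_p s,\mathcal{D}_{PT}\phi_p s)\ge c_1 T\|\phi_p s\|^2 - O(e^{-cT})\|s\|^2$. Summing and choosing $T_1$ large enough that $c_1 T/2$ dominates the additive constant $C$ yields $(s,\mathcal{D}_f s)\ge \tfrac{1}{2}\min(c_0 T^2, c_1 T)\|s\|^2 \ge CT\|s\|^2$.

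\medskip

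The delicate point I expect to wrestle with is the commutator/error control in the IMS step: because $\mathcal{D}_f$ is fourth order, naively commuting with the partition of unity loses $\|s\|_3$-type norms, and one must argue that these are controlled by $(s,\mathcal{D}_f s)^{1/2}$ and $\|s\|$ via the Gårding/elliptic estimate for $\mathcal{D}_f$ together with the fact that the dangerous terms in $\mathcal{D}_f$ beyond the principal symbol $2(\Delta_{d_f})^2$ are themselves only $O(T)$, so they can be grouped with the $CT\|s\|^2$ we are trying to produce rather than fighting against it. Once the localization inequality is in place with honest constants, the conclusion is routine, and Proposition~\ref{prop:vals} then follows from Part~(i), Part~(ii), and a standard min-max argument counting eigenvalues of $\mathcal{D}_T$ below $c$ (they must come from $E_T$, whose dimension is $\dim E_{T,k} = m_k$).
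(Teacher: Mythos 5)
Your part (i) is the paper's argument: the only contribution to $\mathcal{D}_T\sigma_{p,T}$ comes from the annulus where the cutoff varies, where the Gaussian groundstate is exponentially small, and $\mathcal{D}_{T,2},\mathcal{D}_{T,3}$ are handled as formal adjoints with the same support observation. For part (ii) your overall strategy (bulk coercivity from $|df|$ bounded below, spectral gap plus near-orthogonality to the groundstate at the critical points) is also the paper's, but the implementation differs: the paper does not use an IMS partition-of-unity formula. It splits $s=\gamma_2 s+(1-\gamma_2)s=s_1+s_2$, expands $s_1$ \emph{exactly} in the local eigenforms of the Euclidean model operator (possible because the metric is chosen Euclidean on each $U_p$), and computes the cross terms $(s_2,\mathcal{D}_T s_1)$ directly from that expansion, so no commutators with cutoffs ever appear. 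What your route buys is independence from the exact local eigenfunction expansion; what the paper's route buys is the complete avoidance of the fourth-order commutator estimates you correctly identify as the delicate point. Two quantitative remarks: the local spectral gap is in fact of order $T^2$, not $T$ (the nonzero eigenvalues of the model $\mathcal{D}_{PT}$ scale as $T^2$ by the scaling argument, consistent with $\mathcal{D}_T\cong 2\Delta_{d_f}^2$), which only helps you; and the orthogonality defect $c_p$ is bounded by the paper as $O(T^{-1/2})\|s_1\|$ rather than $O(e^{-cT})$, which is all that is needed.

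The one step in your sketch that would fail as written is the IMS error term $-C\|s\|^2$ with a $T$-independent constant. The paper's expansion $\mathcal{D}_T=\mathcal{D}_0+T\mathcal{A}+T^2\mathcal{B}+T^3\mathcal{C}+2T^4|df|^4$ shows that the sub-principal part of $\mathcal{D}_T$ is \emph{not} $O(T)$: the commutators $[\mathcal{D}_T,\phi_p]$ carry coefficients growing like $T^3$, and likewise the terms of $\mathcal{D}_{PT}$ beyond $\Delta_{d_f}^2$ (e.g.\ $C_fC_f^\dagger$ and $d_fM_fd_f^\ast$ in Lemma~\ref{lem:op}) contribute at order $T^2$ on the model space. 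Your proposed fix via a G\aa rding inequality and interpolation does not obviously close, because the elliptic constant for $\mathcal{D}_f$ itself degrades with $T$. The correct repair is to note that all commutator errors are supported in the transition annuli, where $|df|^4\geq V>0$, so they are absorbed by the $2T^4|df|^4$ term rather than by the $CT\|s\|^2$ you are trying to produce; alternatively, adopt the paper's exact local expansion and dispense with the IMS formula entirely. With that correction your argument is sound and yields the stated coercivity.
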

\begin{proof}
For $(i)$ we have
$$\pi_T s  = \sum_{p\in Z_k(f)} (s, \sigma_{p,T}) \sigma_{p,T},$$
and then
For each $p$, $\mathcal{D}_f \sigma_{p,T}$ will be concentrated in an annulus of radius $a \leq r \leq 2a$ around $p$. It follows that
$$(s, \ \mathcal{D}_{T,1}  s) = (s, \pi_T \mathcal{D}_f \pi_T s) = \sum_{p \in Z_k(f)} (s, \sigma_{p,T})^2 (\sigma_{p,T}, \mathcal{D}_f \sigma_{p,T})  $$
Now, since $\sigma_{p,T}$ has support on $U_p(2a)$, we may use the Euclidean metric, so that the eigenvalues of $\mathcal{D}_f$ are proportional to $T^2$. Therefore, there is some constant $C$ such that 
$$ \langle \sigma_{p,T}, \mathcal{D}_f \sigma_{p,T} \rangle \leq C T^2 e^{-r T},$$
where $r$ is the radial coordinate in Euclidean space. Hence there is some $C_1>0$ such that
$$(\sigma_{p,T}, \mathcal{D}_f \sigma_{p,T} ) \leq C_1 T^{2-n} e^{-T}.$$
Hence, for $T$ sufficiently large
$$(\sigma_{p,T}, \mathcal{D}_f \sigma_{p,T} ) \leq \frac{1}{T^2},$$
which gives the first inequality in $(i)$ as
$$ (s, \ \mathcal{D}_{T,1}  s) \leq \frac{1}{T^2} \| s\|^2.$$
For the second inequality, note that $\mathcal{D}_{T,2}$ and $\mathcal{D}_{T,3}$ are formal adjoints of each other, so are equal. Then we have
$$ (s, \mathcal{D}_{T,3} s) = \sum_{p \in Z_{k}(f)} (\pi^\perp_T s, \mathcal{D}_T \sigma_{p,T})(s, \sigma_{p,T})$$
Now observe that, once again, that the eigenvalues of $\mathcal{D}_T$ are proportional to $T^2$. Moreover, the support of $(\pi^\perp_T s, \mathcal{D}_T \sigma_{p,T})$ is an annulus around each critical point. The result follows by analogous reasoning to the previous case.

We now prove $(ii)$. We first define a new function $\gamma_2$ on $M$, equal to $\gamma(r/2)$ for $r \leq 4a$, with $r$ a radial coordinate about each $p \in Z(f)$ (note this includes all critical points, not just index $k$) and set $\gamma_2=0$ otherwise. We may then write a general $s \in E_T^\perp \cap {\bm H}^4(M)$ as
$$ s = \gamma_2 s + (1-\gamma_2)s = s_1+s_2.$$
Both $s_1$ and $s_2$ are elements of $ E_T^\perp \cap {\bm H}^4(M)$. Now observe that the support of $s_1$ is a collection of balls of radius $4a$ about each critical point of $f$, which are Euclidean. Hence we may write $s_1$ as a sum over local eigenvectors for all critical points. A simple scaling argument shows that these are proportional to $T^2$ so we may write 
$$s_1 = \sum_a c_a \psi_a, \quad s_1 =T^2 \sum_a \lambda_a c_a \psi_a, $$
Now we can extend each $\psi_a$ to a function on $M$ by multiplying by a smooth bump function going from one to zero between radii of $4a$ and $6a$ around each critical point. Then we have
$$(s, \mathcal{D}_T s) = (s_2, \mathcal{D}_T s_2) + T^2 \sum_a \lambda_a(c_a \psi_a + 2 s_2, c_a \psi_a).$$
Now all the $T=1$ eigenvalues $\lambda_a$ are positive except for the single zero eigenvalue at each degree $k$ critical point. Let $C_4$ be the smallest non-zero local eigenvalue. We may then write
$$(s, \mathcal{D}_T s) \geq (s_2, \mathcal{D}_T s_2) + C_4 T^2 \sum_a (c_a \psi_a + 2 s_2, c_a \psi_a) - C_4 T^2 \sum_{p \in Z_k(f)}(c_p+\psi_p+2s_2, c_p \psi_p).$$
Now the support of $s_2$ is on the complement of the critical points of $f$. A short calculation shows that
$$ \mathcal{D}_T = \mathcal{D}_0 + T \mathcal{A} + T^2 \mathcal{B} + T^3\mathcal{C}+ 2T^4 | d f |^4, $$
where $\mathcal{A}$, $\mathcal{B}$, $\mathcal{C}$ are differential operators. Now outside the neighbourhoods of each critical point there is some constant $V$ such that $| d f |^4 \geq V$ on $M \setminus \cup_{p \in Z(f)} U_p$. It follows that
$$ \frac{(s_2, \mathcal{D}_T s_2)}{T^4} \geq V \| s_2 \|^2+ O(1/T)$$
and hence that for arbitrary $C_3$ there is a $T_3$ such that for $T \geq T_3$
$$ (s_2, \mathcal{D}_T s_2) \geq C_3 T^2 \| s_2 \|^2$$
It follows then, that there is a $T_4$ such that for $T\geq T_4$
\begin{align*}
(s, \mathcal{D}_T s)&\geq T^2 \sum_a C_4 (s_2 + c_a \psi_a,s_2+ c_a \psi_a) - T^2C_4 \sum_{p \in Z_{k}(f)}(c_p \psi_{0,p}, 2s_2+c_p \psi_{0,p}) \\
 &= T^2 C_4 \|s \|^2 - T^2 C_4 \sum_{p \in Z_{k}(f)}(c_p \psi_{0,p}, 2s_2+c_p \psi_{0,p}) 
\end{align*}
Now we have
$$c_p  =(s_1, \psi_{0,p}),$$
but since $(s_1, \sigma_{p,T})=0$ by assumption, we must have 
$$c_p = (s_1, (1-\gamma) \psi_{0,p})$$
Since $\gamma =1$ near $p$ and $\psi_{0,p}$ decays exponentially, we see that there is a $C_1$ and $T_1$ such that when $T\geq T_1$
\begin{equation} \label{eq:BLOOOOO}
 c_p^2 \leq \frac{C_1}{T} \| s_1 \|^2.
 \end{equation}
Moreover, since $s_2$ is zero on $U_p(2a)$, there must be a $C_2$ such that when $T>T_2$,
$$|(s_2, \psi_{0,p})| \leq \frac{C_2}{T} \|s_2 \|^2$$
Hence there is some $T_5$ such that, for $T\geq T_5$ we have
$$(s, \mathcal{D}_T s) \geq T^2 C_4 \|s \|^2 - T C_4( C_1 \|s_1\|^2+ 2 C_2 \|s_2 \|^2),$$
which implies there is a $C_5$ and $T_6$ such that for $T \geq T_6$,
$$(s, \mathcal{D}_T s) \geq T C_5 \|s \|^2. $$
\end{proof}
Now for any constant $c>0$, let $E_T(c)$ denote the direct sum of eigenspaces of $\mathcal{D}_{T}$ associated with eigenvalues lying in $[-c,c]$. $E_T(c)$ is a finite-dimensional subspace of ${\bm H}^0(M)$. Let $\pi_T(c)$ denote the orthogonal projection operator from ${\bm H}^0(M)$ to $E_T(c)$. 
\begin{lem} \label{lem:58}
There are constants $C>0$ and $T_1>0$ such that for any $T \geq T_1$ and $s \in E_T$,
$$ \| \pi_T(c)s - s \| \leq \frac{C}{T} \| s \|.$$
\end{lem}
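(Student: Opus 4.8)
The plan is to avoid any resolvent/contour-integral machinery and argue directly with the spectral theorem, using the block estimates of Proposition~\ref{prop:est} as a black box. First I would record the structural facts. Restricted to primitive $k$-forms, the operator $\mathcal{D}_T$ is elliptic, self-adjoint, and (by the sum-of-squares identity used in the proof of Proposition~\ref{prop:D}, applied to the Witten-deformed differentials) non-negative, since
\[ (\beta,\mathcal{D}_T\beta)=\|d_f^\ast d_f\beta\|^2+\|d_f^{\Lambda\ast}d_f^\Lambda\beta\|^2+\|d_f^{\ast}d_f^\Lambda\beta\|^2+\|d_f^{\Lambda\ast}d_f\beta\|^2+2\|d_f^{\Lambda\ast}d_f^\ast\beta\|^2\ \geq\ 0 .\]
Hence ${\bm H}^0(M)$ has an orthonormal basis of eigenforms $\phi_j$ of $\mathcal{D}_T$ with eigenvalues $\mu_j\geq 0$, and because the spectrum is non-negative the space $E_T(c)$ is exactly the span of those $\phi_j$ with $\mu_j\leq c$, while $\pi_T(c)$ is the corresponding orthogonal projection. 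The elements of $E_T$ are finite linear combinations of the smooth forms $\sigma_{p,T}$, so they lie in ${\bm H}^4(M)$ and the quadratic form $(s,\mathcal{D}_Ts)$ is defined.

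The key observation is that the quadratic form of $\mathcal{D}_T$ on $E_T$ is tiny. For $s\in E_T$ one has $\pi_Ts=s$ and $\pi_T^\perp s=0$, so $\mathcal{D}_{T,2}s=\mathcal{D}_{T,4}s=0$ and $(s,\mathcal{D}_{T,3}s)=(\pi_T^\perp s,\mathcal{D}_T\pi_Ts)=0$ (using self-adjointness of $\pi_T^\perp$); thus $(s,\mathcal{D}_Ts)=(s,\mathcal{D}_{T,1}s)$, and Proposition~\ref{prop:est}(i) gives $(s,\mathcal{D}_Ts)\leq T^{-2}\|s\|^2$ for $T\geq T_0$. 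On the other hand, expanding $s$ in the eigenbasis,
\[ (s,\mathcal{D}_Ts)=\sum_j\mu_j\,|(s,\phi_j)|^2\ \geq\ c\sum_{\mu_j>c}|(s,\phi_j)|^2 \ =\ c\,\|s-\pi_T(c)s\|^2 .\]
Combining the two bounds yields $\|\pi_T(c)s-s\|^2\leq \frac{1}{c}(s,\mathcal{D}_Ts)\leq \frac{1}{cT^2}\|s\|^2$ for $T\geq T_1:=T_0$, which is the assertion with $C=c^{-1/2}$.

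I do not expect a serious obstacle: once Proposition~\ref{prop:est}(i) is in hand the whole thing is a two-line consequence of the spectral theorem, and the only points needing a little care are the vanishing of the mixed block $(s,\mathcal{D}_{T,3}s)$ on $E_T$ and the identification of $E_T(c)$ with the span of the low-lying eigenforms (which uses $\mathcal{D}_T\geq 0$). I would also note that Proposition~\ref{prop:est}(i) can be sharpened, since $\mathcal{D}_T\sigma_{p,T}$ is supported in the annulus $a\leq|x|\leq 2a$ where the Gaussian local solution of Proposition~\ref{prop:BIGP} is $O(e^{-Ta^2/2})$; this would give an exponentially small bound in the Lemma, but the stated $C/T$ is all that is needed downstream.
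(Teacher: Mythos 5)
Your argument is correct, but it takes a genuinely different route from the paper. The paper follows Zhang's resolvent template: it checks that $(\mathcal{D}_T-\lambda)$ is invertible with a uniform bound on the circle $|\lambda|=c$, represents $\pi_T(c)s-s$ by the contour integral $\frac{1}{2\pi i}\int_\delta\bigl((\lambda-\mathcal{D}_T)^{-1}-\lambda^{-1}\bigr)s\,d\lambda$, uses the identity $\bigl((\lambda-\mathcal{D}_T)^{-1}-\lambda^{-1}\bigr)s=\lambda^{-1}(\lambda-\mathcal{D}_T)^{-1}(\mathcal{D}_{T,1}+\mathcal{D}_{T,3})s$ valid for $s\in E_T$, and then needs to upgrade the quadratic-form estimates of Proposition~\ref{prop:est} to an operator-norm bound $\|(\mathcal{D}_{T,1}+\mathcal{D}_{T,3})s\|=O(T^{-1})\|s\|$ (a step the paper attributes to Cauchy--Schwarz, which is slightly delicate for the non-symmetric block $\mathcal{D}_{T,3}$). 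You instead exploit the non-negativity of $\mathcal{D}_T$ --- a fact the paper does not invoke at this point but which is implicit in its later decomposition $\mathcal{D}_T=\sum_i K_i^\dagger K_i$ --- so that a Chebyshev-type bound on the spectral measure gives $c\,\|s-\pi_T(c)s\|^2\le (s,\mathcal{D}_Ts)=(s,\mathcal{D}_{T,1}s)\le T^{-2}\|s\|^2$ directly. Your supporting observations (that the mixed blocks contribute nothing to the quadratic form on $E_T$, and that $E_T(c)$ is the span of eigenmodes with eigenvalue in $[0,c]$ once positivity is known) are both correct, and your proof uses only part (i) of Proposition~\ref{prop:est}. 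What the contour-integral route buys in general is applicability to operators that are self-adjoint but not semi-bounded, such as the Dirac-type operator $d_{Tf}+d_{Tf}^{\ast}$ in Zhang's original setting, where the eigenvalues genuinely fill $[-c,c]$ and one cannot simply discard the high modes; since the operator here is a sum of squares, your shortcut is legitimate and yields the same $C/T$ bound with the explicit constant $C=c^{-1/2}$.
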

\begin{proof}
Let $\delta = \{ \lambda \in \mathbb{C} \, : \, |\lambda | = c\}$ be the counter-clockwise oriented circle.  Let $s^\prime \in{\bm H}^4(M)$ be arbitrary, then we have  
$$ (s^\prime, (\mathcal{D}_T - \lambda) s^\prime)  = \sum_i (s^\prime, \mathcal{D}_{T,i} s^\prime) - \lambda \|s^\prime \|^2$$
Now, using the estimates in Proposition~\ref{prop:est} we see that when $T$ is sufficiently large the above expression is non-zero. Hence the resolvent
$$ (\mathcal{D}_T - \lambda)^{-1},$$
is well-defined for succifiently large $T$. Moreover, we see that for $T$ sufficiently large there is some constant $C_1$ such that
$$ \|  (\mathcal{D}_T - \lambda) s^\prime \| \geq C_1 \| s\|.$$
Then by the basic spectral theorem in operator theory~\cite{douglas2012banach}, we have
$$ \pi_T(c)s - s = \frac{1}{2 \pi i} \int_\delta ((\lambda - \mathcal{D}_T)^{-1} - \lambda^{-1}) s \,d \lambda . $$ 
Now we have (recall $s \in E_T$)
$$ \left ( ( \lambda - \mathcal{D}_T)^{-1} - \lambda^{-1} \right) s = \lambda^{-1} (\lambda - \mathcal{D}_T)^{-1}(\mathcal{D}_{T,1}+\mathcal{D}_{T,3}) s$$
We then observe that for $T$ sufficiently large
$$\|(\lambda - \mathcal{D}_T)^{-1}(\mathcal{D}_{T,1}+\mathcal{D}_{T,3}) s\| \leq \frac{1}{C_1} \| (\mathcal{D}_{T,1}+\mathcal{D}_{T,3}) s\|$$
Using the Cauchy-Schwartz inequality we then see there is a $C_2$ such that for $T$ sufficiently large
$$\|(\lambda - \mathcal{D}_T)^{-1}(\mathcal{D}_{T,1}+\mathcal{D}_{T,3}) s\| \leq \frac{1}{C_2 T} \|  s\|,$$
the result follows.
\end{proof}
\begin{rem}
As in Ref.\cite{zhang2001lectures}, in the above proof we complexify the spaces and extend the operators accordingly, however one can remain in the real category by taking the real parts above.
\end{rem}

\begin{proof}[Proof of Proposition~\ref{prop:vals}]
Lemma~\ref{lem:58} implies that for $T$ sufficiently large, the set of $\pi_T(c) \sigma_{p,T}$, $p \in Z_k(f)$ are linearly independent. Thus, for $T$ sufficiently large
$$ {\rm dim}\, E_T(c)  \geq {\rm dim} \, E_T.$$
Now assume $ {\rm dim}\, E_T(c)  > {\rm dim} \, E_T$. This implies that there is some $s \in E_T(c)$ which is orthogonal to $P_T(c)E_T$. That is
$$(s, P_t(c) \sigma_{p,T} )=0,$$
for all $p \in Z_k(f)$. This then implies
\begin{align*} \pi_T s &= \sum_{p \in Z_k(f)} (s, \sigma_{p,T}) \sigma_{p,T} - \sum_{p \in Z_{k}(f)} (s, \pi_T (c) \sigma_{p,T}) \pi_T(c) \sigma_{p,T} \\
&= \sum_{p \in Z_k(f)} (s, \sigma_{p,T})(\sigma_{p,T} - \pi_T(c) \sigma_{p,T}) - \sum_{p \in Z_k(f)} (s, \sigma_{p,T} - \pi_T(c) \sigma_{p,T}) \pi_T(c) \sigma_{p,T}
\end{align*}
Now by applying the triangle inequality, and Cauchy-Schwartz to the inequality and then using Lemma~\ref{lem:58} we see that for $T$ sufficiently large, there is a constant $C_2$ such that
$$ \| \pi_T s \| \leq \frac{C_2}{T} \| s\|,$$
which implies that, for $T$ sufficiently large
$$\| \pi_T^\perp s \| = \| s - \pi_T s\| \geq \|s\| - \| \pi_T s \| \geq C_3 \|s\|,$$
for some constant $C_3$. We therefore find, using Proposition~\ref{prop:est}
$$ C T C^2_3 \|s\|^2 \leq C T \|\pi_T^\perp s\|^2 \leq (\pi_T^\perp s, \mathcal{D}_T \pi_T^\perp s).$$
Now, considering the original definition in Proposition~\ref{prop:D} we may write 
$$\mathcal{D}_T = \sum_{i=1}^5 K_i^\dagger K_i,$$
which allows us to write, using Proposition~\ref{prop:est}
\begin{align*} (\pi_T^\perp s, \mathcal{D}_T \pi_T^\perp s) &= \sum_{i=1}^5 \| K_i\pi_T^\perp s \|^2 =  \sum_{i=1}^5 \| K_i(s-\pi_T s)\|^2 \leq 2 \sum_{i=1}^5( \| K_is \|^2 + \| K_i \pi_T s\|^2) \\ &=  (s, \mathcal{D}_T  s) + (\pi_T s, D_T \pi_T s) \leq (s, \mathcal{D}_T s) + \frac{1}{T^2} \|s\|^2.
\end{align*}
We then find
$$ CTC_3^2 \|s\|^2 \leq (s, \mathcal{D}_T s)+\frac{1}{T^2} \|s \|^2,$$
rearranging gives
$$ (s, \mathcal{D}_T s) \geq  CTC_3^2 \|s\|^2-\frac{1}{T^2} \|s \|^2.$$
But if $\|s\| \neq 0$, then as $T \to \infty$ this contradicts the initial assumption that $s \in E_T(c)$. Hence $\|s\|=0$ and we have
$$ {\rm dim}\, E_T(c)  = {\rm dim} \, E_T.$$
\end{proof}

\newpage
\section{Proof of the inequalities} \label{sec:proof}
The dimension of the kernel of $D_{PT}$ must be less than or equal to the number eigenvalues in the interval $[0,c]$. From Proposition~\ref{prop:vals} we have the following.
\begin{cor}
$${\rm dim}\, {\rm ker}\, D_{PT} = {\rm dim} PH^k_{d+d^\Lambda}(M) \leq m_k$$
\end{cor}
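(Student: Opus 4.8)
The plan is to read this statement off from the analytic input already assembled in the earlier sections; no new estimates are required. The first task is to identify ${\rm ker}\,D_{PT}$. The operator $\mathcal{D}_{PT}$ carries primitive $k$-forms to $k$-forms, and for primitive $\alpha$ the pairing $(\alpha,\mathcal{D}_{PT}\alpha)$ is a sum of the terms $\|d_f^\ast d_f\alpha\|^2$, $\|d_f^\ast d_f^\Lambda\alpha\|^2$, $\|d_f^{\Lambda\ast}d_f^\ast\alpha\|^2$ with positive coefficients --- the $f$-deformed version of the identity used to characterise $\mathcal{D}_P$ in Section~\ref{sec:hodge}. Hence ${\rm ker}\,\mathcal{D}_{PT}$ consists exactly of the primitive forms with $d_f\alpha=d_f^{\Lambda\ast}d_f^\ast\alpha=0$, i.e.\ the space $P\mathcal{H}^k_{d_f+d_f^\Lambda}(M)$ of primitive $d_f+d_f^\Lambda$-harmonic $k$-forms (for primitive $\alpha$ the relation $d_f^\Lambda\alpha=-\Lambda d_f\alpha$ makes $d_f\alpha=0$ force $d_f^\Lambda\alpha=0$, so nothing more is required). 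By the Lefschetz decomposition of Proposition~\ref{prop:hd}, ${\rm dim}\,\mathcal{H}^j_{d_f+d_f^\Lambda}(M)=\sum_{r\geq 0}{\rm dim}\,P\mathcal{H}^{j-2r}_{d_f+d_f^\Lambda}(M)$ for $j\leq n$, so ${\rm dim}\,P\mathcal{H}^k_{d_f+d_f^\Lambda}(M)={\rm dim}\,\mathcal{H}^k_{d_f+d_f^\Lambda}(M)-{\rm dim}\,\mathcal{H}^{k-2}_{d_f+d_f^\Lambda}(M)={\rm dim}\,H^k_{d+d^\Lambda}(M)-{\rm dim}\,H^{k-2}_{d+d^\Lambda}(M)$, a quantity that does not depend on $f$; this common value is what is meant by ${\rm dim}\,PH^k_{d+d^\Lambda}(M)$. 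This gives the equality ${\rm dim}\,{\rm ker}\,D_{PT}={\rm dim}\,PH^k_{d+d^\Lambda}(M)$ and, crucially for what follows, shows the left-hand side is independent of $T$.

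For the bound by $m_k$, I would observe that the operator $\mathcal{D}_T$ of Section~\ref{sec:global} is nonnegative --- the proof of Proposition~\ref{prop:vals} writes it as $\sum_{i=1}^{5}K_i^\dagger K_i$ --- and that on primitive $k$-forms it has the same kernel as $\mathcal{D}_{PT}$, namely $P\mathcal{H}^k_{d_f+d_f^\Lambda}(M)$. Hence ${\rm ker}\,\mathcal{D}_{PT}$ is the $0$-eigenspace of $\mathcal{D}_T$, and since $0\in[0,c]$ for every $c>0$ it is contained in $E_T(c)$. Now fix any $c>0$; by Proposition~\ref{prop:vals} there is $T_c$ so that for $T>T_c$ one has ${\rm dim}\,E_T(c)=m_k$, whence ${\rm dim}\,{\rm ker}\,D_{PT}\leq m_k$. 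Since the left-hand side does not vary with $T$, the inequality holds outright, and combined with the previous paragraph it yields ${\rm dim}\,PH^k_{d+d^\Lambda}(M)={\rm dim}\,{\rm ker}\,D_{PT}\leq m_k$.

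I do not expect a genuine obstacle here: all the substance has been carried out in Sections~\ref{sec:local}--\ref{sec:global}, and this corollary is a matter of aligning identifications. The point that wants a little care is that Proposition~\ref{prop:vals} is phrased in terms of the eigenvalue count of $\mathcal{D}_T$ acting on primitive $k$-forms rather than of $\mathcal{D}_{PT}$ itself; these are distinct operators, but they share the relevant kernel --- the primitive $d_f+d_f^\Lambda$-harmonic forms --- which is all the estimate uses, and one must also track the $T$-independence of ${\rm dim}\,{\rm ker}\,D_{PT}$ in order to pass to the large-$T$ regime where Proposition~\ref{prop:vals} is available. Finally, this corollary feeds into Theorem~\ref{thm:1}: summing over the primitive summands of the Lefschetz decomposition of $\mathcal{H}^k_{d+d^\Lambda}(M)$ gives the first inequality, while applying the corollary to $-f$ --- whose index-$j$ critical points are precisely the index-$(2n-j)$ critical points of $f$ --- together with the isomorphism $H^k_{dd^\Lambda}(M)\cong\mathcal{H}^{2n-k}_{d+d^\Lambda}(M)$ gives the second, with Lemma~\ref{lem:what} handling in both cases the reduction to a compatible Morse function.
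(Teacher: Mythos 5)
Your proposal is correct and follows the same route as the paper: the paper's one-line proof likewise bounds $\dim\ker D_{PT}$ by the eigenvalue count of Proposition~\ref{prop:vals} on $[0,c]$, with the identification $\ker\mathcal{D}_{PT}=P\mathcal{H}^k_{d_f+d_f^\Lambda}(M)$ and the $T$-independence of its dimension already supplied by Section~\ref{sec:witten} and Proposition~\ref{prop:hd}. You have merely made explicit the details the paper leaves implicit, including the harmless distinction between $\mathcal{D}_T$ restricted to primitive forms and $\mathcal{D}_{PT}$.
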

Applying the Lefschetz decomposition then proves Theorem~\ref{thm:1} for the $d+d^\Lambda$ cohomology. It remains to establish the equivalent result for the $dd^\Lambda$ cohomology. This is done by appealing to Hodge duality. 
\begin{lem}
The Hodge star operator acts as
$$\ast : \mathcal{H}^k_{d_f+ d_f^\Lambda}(M) \to \mathcal{H}^{2n-k}_{d_{-f} d^\Lambda_{-f}}(M)$$
\end{lem}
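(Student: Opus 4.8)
The plan is to show that $\ast$ intertwines the defining equations of $d_f+d_f^\Lambda$ harmonic forms with those of $d_{-f}d_{-f}^\Lambda$ harmonic forms, using the duality between the two complexes already recorded in the excerpt (the isomorphism $H^k_{d+d^\Lambda}\cong H^{2n-k}_{dd^\Lambda}$ is the undeformed version of exactly this). Recall $\alpha$ is $d_f+d_f^\Lambda$ harmonic iff $d_f\alpha = d_f^\Lambda\alpha = d_f^{\Lambda\ast}d_f^\ast\alpha = 0$. The $dd^\Lambda$-harmonic condition is the Hodge-theoretic shadow of the $dd^\Lambda$ complex, namely $d_g^\ast\beta = d_g^{\Lambda\ast}\beta = d_g d_g^\Lambda\beta = 0$ (the adjoint statement to the $d+d^\Lambda$ one); I would first pin this down from the $dd^\Lambda$ complex in Section~\ref{sec:coh} exactly as the $d+d^\Lambda$ harmonic forms were extracted from their complex.

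The key computational input is how $\ast$ conjugates the deformed operators. Since $d_f = e^{-f}de^f$ and $d_f^\ast = e^f d^\ast e^{-f}$, and $\ast d^\ast = \pm d\ast$, $\ast d = \mp d^\ast\ast$ in the usual way, one gets $\ast d_f^\ast = \pm d_{-f}\ast$ and $\ast d_f = \mp d_{-f}^\ast\ast$ (the sign of $f$ flips because the $e^{\pm f}$ factors get moved across $\ast$, which does not see $f$, but the conjugation pattern for $d$ versus $d^\ast$ is opposite). Likewise, from $d_f^\Lambda = [d_f,\Lambda]$, $d_f^{\Lambda\ast} = [L,d_f^\ast]$, and the fact that $\ast$ swaps $L$ and $\Lambda$ up to sign, one obtains $\ast d_f^\Lambda = \pm d_{-f}^{\Lambda\ast}\ast$ and $\ast d_f^{\Lambda\ast} = \mp d_{-f}^\Lambda\ast$. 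I would assemble these four identities carefully (tracking signs via Lemma~1, Lemma~2 and the deformed analogues), but the precise signs are irrelevant for the conclusion since the harmonicity conditions are homogeneous.

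Granting these, the proof is immediate: if $\alpha\in\mathcal{H}^k_{d_f+d_f^\Lambda}(M)$ then $d_f\alpha=0$ gives $d_{-f}^{\Lambda\ast}(\ast\alpha)$... wait, rather $d_f\alpha=0 \Rightarrow d_{-f}^\ast(\ast\alpha)=0$; $d_f^\Lambda\alpha=0\Rightarrow d_{-f}^{\Lambda\ast}(\ast\alpha)=0$; and $d_f^{\Lambda\ast}d_f^\ast\alpha=0$ translates, applying $\ast$ and inserting $\ast\ast = \pm1$, into $d_{-f}d_{-f}^\Lambda(\ast\alpha)=0$. These are exactly the three conditions defining $\mathcal{H}^{2n-k}_{d_{-f}d_{-f}^\Lambda}(M)$, and $\ast$ is a linear isomorphism $\Omega^k\to\Omega^{2n-k}$, so it restricts to an isomorphism of the harmonic spaces. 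The main obstacle is purely bookkeeping: getting the signs and the $\pm f$ swap right in the four conjugation identities, and confirming that the naive ``adjoint'' characterization of $dd^\Lambda$-harmonic forms ($d^\ast\beta = d^{\Lambda\ast}\beta = dd^\Lambda\beta = 0$) is indeed the right kernel — i.e. that the operator $\ast\mathcal{D}_{-f}\ast$ (or its $dd^\Lambda$ analogue) has this kernel. If the $dd^\Lambda$ harmonic space has not been given an explicit operator earlier, I would simply define it by these three equations, note it computes $H^{2n-k}_{dd^\Lambda}$ by the same Hodge-theoretic argument used for $d+d^\Lambda$ (which is standard and dual), and proceed.
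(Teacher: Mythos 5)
Your proposal is correct and takes essentially the same route as the paper: the paper's (very terse) proof likewise invokes the Tseng--Yau characterization of $dd^\Lambda$-harmonic forms as $dd^\Lambda\alpha = d^\ast\alpha = d^{\Lambda\ast}\alpha = 0$ and then appeals to the conjugation behaviour of the deformed differentials under $\ast$ (with $f\mapsto -f$), which is exactly the bookkeeping you describe. The only micro-step worth making explicit is that your identities applied to $d_f^{\Lambda\ast}d_f^\ast\alpha=0$ first give $d_{-f}^{\Lambda}d_{-f}(\ast\alpha)=0$, which coincides with $d_{-f}d_{-f}^{\Lambda}(\ast\alpha)=0$ because $d$ and $d^\Lambda$ anticommute.
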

\begin{proof}
As discussed in Ref.\cite{tseng2012cohomology}, the harmonic $dd^\Lambda$ forms are given by solving
$$ dd^\Lambda \alpha = 0, \quad d^\ast \alpha=0, \quad d^{\Lambda \ast} \alpha = 0.$$
The result then follows from the properties of the differentials given in Sections~\ref{sec:hodge} and~\ref{sec:witten}.
\end{proof}
Applying the Lefschetz decomposition to the harmonic forms ${H}^{2n-k}_{d_{-f} d^\Lambda_{-f}}$, and observing that $f \to -f$ sends a Morse index $n_p$ to $2n-n_p$ then completes the proof of Theorem~\ref{thm:1}.

\bibliographystyle{plain}

\end{document}